\documentclass[reqno]{amsart}
\usepackage{color}
\usepackage{amsfonts,amsmath,amsthm,amsfonts,amssymb,amstext}
\usepackage[dvips]{graphicx}
\usepackage{psfrag}
\usepackage{url}
\usepackage[dvipsnames]{xcolor}

\pagestyle{plain} \pagenumbering{arabic}

\makeatletter \@addtoreset{equation}{section} \makeatother

\renewcommand\thetable{\thesection.\@arabic\c@table}

\theoremstyle{plain}

\newtheorem{theorem}{Theorem}[section]
\newtheorem{proposition}{Proposition}[section]
\newtheorem{lemma}{Lemma}[section]
\newtheorem{corollary}{Corollary}[section]

\newtheorem{remark}{Remark}[section]

\begin{document}

\title[Dimension Estimates and Continuity of Pressure]{Dimension Estimates for Non-conformal Repellers and Continuity of Sub-additive Topological Pressure}

\author{Yongluo Cao}
\address{Departament of Mathematics, East China Normal University\\
 Shanghai 200062, P.R. China}
\address{Departament of Mathematics, Soochow University\\
Suzhou 215006, Jiangsu, P.R. China}
\address{Center for Dynamical Systems and Differential Equation, Soochow University\\
Suzhou 215006, Jiangsu, P.R. China}
\email{ylcao@suda.edu.cn}

\author{Yakov Pesin}
\address{Department of Mathematics \\ Pennsylvania State University \\ University Park, PA 16802, USA}
\email{pesin@math.psu.edu}

\author{Yun Zhao}
\address{School of Mathematical Sciences, Soochow University\\
Suzhou 215006, Jiangsu, P.R. China}
\address{Center for Dynamical Systems and Differential Equation, Soochow University\\
Suzhou 215006, Jiangsu, P.R. China
}
\email{zhaoyun@suda.edu.cn}

\thanks{The first author is partially supported by NSFC (11771317, 11790274), the second author is partially supported  by NSF grant DMS-1400027, the third author is  partially supported by NSFC (11871361,11790274).}

\date{\today}

\begin{abstract}Given a non-conformal repeller $\Lambda$ of a $C^{1+\gamma}$ map, we study the Hausdorff dimension of the repeller and continuity of the sub-additive topological pressure for the sub-additive singular valued potentials. Such a potential always possesses an equilibrium state. We then use a substantially modified version of Katok's approximating argument, to construct a compact invariant set on which the corresponding dynamical quantities (such as Lyapunov exponents and metric entropy) are close to that of the equilibrium measure. This allows us to establish  continuity of the sub-additive topological pressure and obtain a sharp lower bound of the Hausdorff dimension of the repeller. The latter is given by the zero of the super-additive topological pressure.
\end{abstract}

\keywords{expanding map, repeller, topological pressure, non-uniform hyperbolicity theory, dimension}

\footnotetext{2010 {\it Mathematics Subject classification}:
 37C45, 37D35, 37H15}

\maketitle


\section{Introduction}

Dimension is an important characteristic of invariant sets and measures of dynamical systems, (see the books \cite{Ba08,Ba11,fal03,Pes97,pu} where the role of dimension in the theory of dynamical systems is well explained). In this regard, different versions of dimension have been put forward to characterize various dynamical phenomena. However, computing these fractal invariants is usually a challenging problem, because they depend on the microscopic structure of the set.

For repellers of conformal expanding maps Bowen \cite{Bo79} and Ruelle \cite{Ru82} found that their Hausdorff dimension is a solution of an equation involving topological pressure. More precisely, if $\Lambda$ is an isolated compact invariant set of a $C^{1+\gamma}$ conformal expanding map $f$ and $f|_\Lambda$ is topologically mixing, then the Hausdorff dimension of $\Lambda$ is given by the unique root $s$ of the following equation known as \emph{Bowen's equation}:
\begin{eqnarray}\label{bowen-equ}
P(f|_{\Lambda}, -s\log \|D_xf\|)=0,
\end{eqnarray}
where $P(f|_\Lambda,\cdot)$ denotes the topological pressure. Moreover, the equilibrium measure corresponding to the potential function $-s\log \|D_xf\|$ has dimension $s$ and hence, is the measure of maximal dimension.

In this conformal setting, other dimension characteristics such as lower and upper box dimensions are equal to $s$ and the topological pressure $P(f|_{\Lambda}, -\log \|D_xf\|)$ is continuous with respect to $f$ in the $C^1$ topology, hence, so is the Hausdorff dimension. Furthermore, one can also establish analytic dependence of the Hausdorff dimension on the maps with some specific classes of invariant sets called \emph{dynamically defined sets}, see Ruelle's work on hyperbolic Julia sets \cite{Ru82}, which in turn was inspired by Bowen's work on the limits sets for quasi-Fuchsian groups \cite{Bo79}. In the case of a basic set $\Lambda$ for an Axiom A diffeomorphism $f$ on a compact surface $M$, the dependence of the Hausdorff dimension under small perturbations of $f$ was shown to be continuous by McCluskey and Manning \cite{MM83} and to be $C^\infty$ by Ma\~{n}\'{e} \cite{ma} (see also \cite{jl}). Further, Pollicott \cite{P15} showed the analytic dependence of the Hausdorff dimension of the basic set for real analytic Smale horseshoe maps. See also \cite{KKPW,VW,WO} for related works.

The case of non-conformal repellers, which we consider in this paper, is drastically different. In this case the Hausdorff dimension of the repeller may not be equal to its (lower or upper) box dimension and there may not be any \emph{invariant} measure of maximal dimension (see \cite{DS}). The study of dimension in this case is a substantially more complicated problem and to approach it different notions of topological pressure have been introduced, which allow one to obtain some upper bounds on the dimension. In \cite{ba96} Barreira proved that formula \eqref{bowen-equ} holds in a variety of settings. In \cite{fal94}, Falconer defined the topological pressure for sub-additive potentials and obtained the variational principle under the \emph{bounded distortion} or \emph{$1$-bunched condition}. He also proved that the zero of the topological pressure of sub-additive singular valued potentials gives an upper bound of the Hausdorff dimension of repellers. In \cite{zhang}, Zhang introduced a new version of Bowen's equation which involves the limit of a sequence of topological pressures for singular valued potentials, and proved that the unique solution of this equation is an upper bound of the Hausdorff dimension of repellers. Finally, in \cite{bch}, using thermodynamic formalism for sub-additive potentials developed in \cite{cfh}, the authors showed that the zero of the topological pressure of sub-additive singular valued potentials $\Phi_f(t)=\{-\varphi^t(\cdot,f^n)\}_{n\ge 1}$ (see precise definition in Section \ref{svp}) gives an upper bound of the Hausdorff dimension of repellers, and furthermore, that the upper bounds obtained in the previous works \cite{bch,fal94,zhang} are all equal. We refer the reader to \cite{cp} and \cite{bg} for a detailed description of the recent progress in dimension theory of dynamical systems.

We also note that in the case of iterated function system, the zero of topological pressure of sub-additive singular valued potentials always gives an upper bound for the dimension of the set, and sometimes gives the exact value of the dimension, see \cite{F88} and \cite{S88} for details. Recently, Feng and Shmerkin \cite{fs14} have proved that the sub-additive topological pressure of singular valued potentials is continuous in the class of generic self-affine maps. Consequently, the dimension of ``typical'' self-affine sets is also continuous. This resolves a folklore open problem in fractal geometry.

The motivation for this paper is two-fold. First, in view of Zhang's result \cite{zhang}, it is interesting to know whether there is a lower bound for the Hausdorff dimension of a non-conformal repeller that can be obtained as the zero of topological pressure. A natural way to proceed is to replace the sub-additive singular valued potentials with super-additive ones (see Section \ref{svp} the definition). However, in doing so one faces a difficult problem: it is not known whether the corresponding super-additive topological pressure defined in a usual way via separated sets satisfies the variational principle for a general topological dynamical system, although it is true for some special systems, see \cite{bch} for details. To overcome this difficulty, we define the super-additive topological pressure via variational principle (see Section 2.4). This allows us to obtain a lower bound on the Hausdorff dimension of repellers for a general $C^{1+\gamma}$ expanding map, see Theorem \ref{dim-main}, which to the best of our knowledge, is the sharpest lower bound currently known.

Second, in view of Feng and Shmerkin's result \cite{fs14}, it is natural to ask whether the topological pressure of sub-additive singular valued potentials is continuous with respect to the dynamics $f$ in an appropriate topology. One of our main result shows that the map
$f\mapsto P(f|_\Lambda,\Phi_f(t))$ is continuous in the $C^1$ topology within the class of $C^{1+\gamma}$ expanding maps $f$, see Theorem \ref{con-top}. Further, we show that the unique root $t(f,\Lambda)$ of Bowen's equation $P(f|_\Lambda,\Phi_f(t))=0$ is exactly the \emph{Carath\'{e}odory singular dimension} of the repeller (see Section \ref{cont-cs-dim} for details). Thus replacing the Hausdorff dimension of a repeller in the non-conformal setting with its appropriately chosen Carath\'{e}odory singular dimension, one obtains a \emph{precise} value of the dimension. Furthermore, our result on continuity of topological pressure thus implies that the Carath\'{e}odory singular dimension also varies continuously with the dynamics, see Theorem \ref{carath}.

Our main innovation in obtaining a lower bound on the dimension and in establishing continuity of topological pressure, which distinguishes our approach from previous ones, is based on some powerful new results in non-uniform hyperbolicity theory for non-invertible maps. Therefore, we require that $f$ is of class of smoothness $C^{1+\gamma}$. These new results are given by Theorems 5.1 and 5.2 and are of independent interest in hyperbolicity theory. The first one is in the spirit of Katok's approximation argument, see \cite{kat} or \cite[Supplement S.5]{K95} but we need a substantially stronger version of it. Namely, we show that in the setting of expanding maps, given an invariant ergodic measure
$\mu$, there is a compact invariant set $K$ with dominated splitting whose expansion rates are close to the Lyapunov exponents of $\mu$; moreover, the topological entropy of $f|K$ is close to the metric entropy of $\mu$ (see Theorem 5.1).

Further, a similar result holds for any sufficiently small perturbation $g$ of $f$: there is a compact set $K$ invariant under $g$ with dominated splitting whose expansion rates are close to the Lyapunov exponents of $\mu$; moreover, the topological entropy of $g|K$ is close to the metric entropy of $\mu$ (see Theorem 5.2). We shall apply these theorems in the proofs of our two main results: 1) a lower bound for the Hausdorff dimension of the repeller (see Theorem \ref{dim-main} and Section \ref{main-proof}) where we choose $\mu$ such that the corresponding \emph{free energy} $h_\mu(f)+\mathcal{F}_*(\Psi_f(s),\mu)$ (here $\mathcal{F}_*(\Psi_f(s),\mu)$ is the potential associated with the sequence
$\Psi_f(s)$ of super-additive singular valued potentials, see Sections 2.4 and 3.1) is sufficiently close to the super-additive topological pressure; and 2) continuity of sub-additive topological pressure (see Theorem 3.4 and Section \ref{s6.6}) where we choose $\mu$ to be an equilibrium measure for the singular valued sub-additive potential $\Phi_f(t)$.

Results similar to our Theorem 5.1 were obtained in some particular situations by 1) Misiurewicz and Szlenk \cite{ms} for continuous and piecewise monotone maps of the interval; 2) Przytycki and Urba\'{n}ski \cite{pu} for holomorphic maps in the case of a measure with only positive Lyapunov exponent; 3) Persson and Schmeling \cite{ps} for dyadic Diophantine approximations. For $C^{1+\gamma}$ maps results related to Katok's approximation construction (and hence, in some way to our Theorem 5.1) were obtained by Chung \cite{chu}, Yang \cite{yang}, Gelfert \cite{G1, G2}. See also \cite{M85,M88,M89,sa02,sa03,ls, morsch} that represent works close to this topic.

For general $C^{1+\gamma}$ diffeomorphisms (i.e., invertible maps) Theorem 5.1 was shown by Avila, Crovisier, and Wilkinson in \cite{acw}. While their construction of compact invariant set is based on the shadowing lemma, our approach is more geometrical and gives the desired compact set via a Cantor-like construction. The advantage of our approach in the settings of expanding maps is that it allows us to treat the case of non-invertible maps and also obtain a similar result for small $C^1$ perturbations thus proving Theorem 5.2.

The paper is organized as follows. In Section \ref{np}, we recall various notions of topological pressure and dimension. In particularly, we define the super-additive topological pressure via variational relation and discuss  some properties of super-additive topological pressure. In Section \ref{dimension} we state our main results on dimension estimates and continuity of the topological pressure and dimension. In particular, we introduce the super- and sub- additive singular valued potentials and the sub-additive potential as well as the corresponding pressure functions. We then show that given a (non-conformal) repeller, the zero of the topological pressure function of the super-additive singular valued potential gives the lower bound on its Hausdorff dimension and the zero of the topological pressure function of a sub-additive potential gives the upper bound on its upper box dimension (under the dominated splitting condition). In Section \ref{cont-cs-dim}, we introduce the notion of Carath\'{e}odory singular dimension of a repeller and prove that it depends continuously on the map. In Section \ref{ly-exp}, we prove our two main technical results discussed above on construction of an invariant compact subset with dominated splitting and large topological entropy for the map $f$ and its small perturbations. Section \ref{main-thms} contains the proofs of our main results. In the last section \ref{application} we discuss continuity of topological pressure for the case of matrix cocycles over subshifts of finite type.
\vskip0.03in
\noindent{\bf Acknowledgements.} The authors would like to thank the referees for their careful reading and valuable comments which helped improve the paper substantially. The authors would like to thank Professor Dejun Feng and Wen Huang for their suggestions and comments. Ya. P. wants to thank Mittag-Leffler Institute and the Department of Mathematics of Weizmann Institute of Science where part of this work was done for their hospitality.


\section{Notations and Preliminaries}\label{np}

\subsection{Dimensions of sets and measures}We recall some notions and basic facts from dimension theory, see the books \cite{fal03} and \cite{Pes97} for detailed introduction.

Let $X$ be a compact metric space equipped with a metric $d$. Given  a subset $Z$ of $X$, for $s\geq 0$ and $\delta>0$, define
\[
\mathcal{H}_{\delta}^{s}(Z):=\inf \left\{\sum_{i}|U_i|^s: \
Z\subset \bigcup_{i}U_i,~|U_i|\leq \delta\right\}
\]
where $|\cdot|$ denotes the diameter of a set. The quantity
$\mathcal{H}^{s}(Z):=\lim\limits_{\delta\rightarrow 0}\mathcal{H}_{\delta}^{s}(Z)$ is called the {\em $s-$dimensional Hausdorff measure} of $Z$. Define the {\em Hausdorff dimension} of $Z$, denoted by $\dim_H  Z$, as follows:
\[
\dim_H  Z =\inf \{s:\ \mathcal{H}^{s}(Z)=0\}=\sup \{s: \mathcal{H}^{s}(Z)=\infty\}.
\]
Further define the \emph{lower and upper box dimensions} of $Z$ respectively by
$$
\underline{\dim}_B Z=\liminf\limits_{\delta\to0}\frac{\log N(Z,\delta)}{-\log\delta}\ \text{and}\ \overline{\dim}_B Z=\limsup\limits_{\delta\to0}\frac{\log N(Z,\delta)}{-\log\delta},
$$
where $N(Z,\delta)$ denotes the smallest number of balls of radius $\delta$ needed to cover the set $Z$. Clearly, $\dim_H  Z \le \underline{\dim}_B Z\le \overline{\dim}_B Z$ for each subset $Z\subset X$.

If $\mu$ is a probability measure on $X$, then the {\em Hausdorff dimension and the lower and upper box dimension of $\mu$} are defined respectively by
\begin{eqnarray*}
\dim_H \mu&=&\inf \Big\{\dim_HY: Y\subseteq X, \mu(Y)=1\Big\},\\
\underline{\dim}_B\mu&=&\lim\limits_{\delta\to0}\inf\Big\{\underline{\dim}_BY:
Y\subseteq X, \mu(Y)\geq1-\delta\Big\},\\
\overline{\dim}_B\mu&=&\lim\limits_{\delta\to0}\inf\Big\{\overline{\dim}_BY:
Y\subseteq X, \mu(Y)\geq1-\delta\Big\}.
\end{eqnarray*}
The following inequalities are immediate
$\underline{\dim}_H\mu\leq \underline{\dim}_B\mu\leq \overline{\dim}_\mathrm{B}\mu$.

Finally, we define the {\em lower and upper pointwise dimensions of the measure
$\mu$} at the point $x\in X$ by
\begin{equation}\label{def2.9}
\underline{d}_\mu(x)=\liminf\limits_{r\to0}\frac{\log\mu(B(x,r))}{\log r}\ \text{and}\ \overline{d}_\mu(x)=\limsup\limits_{r\to0}\frac{\log\mu(B(x,r))}{\log r},
\end{equation}
where $B(x,r)=\{y\in X: d(x,y)<r\}$.

In particular, if there exists a number $s$ such that
$$
\lim\limits_{r\to0}\frac{\log\mu(B(x, r))}{\log r} = s
$$
for $\mu$-almost every $x\in X$, then $\dim_H\mu = s$, see \cite{young}.

\subsection{Repellers for expanding maps} Let $f:M\to M$ be a smooth map of a $m_0$-dimensional compact smooth Riemannian manifold $M$ and let $\Lambda:=\Lambda_f$ be a compact $f$-invariant subset of $M$. We call $\Lambda$ a \emph{repeller} for $f$ and $f$ \emph{expanding} if
\begin{enumerate}
\item[(1)]there exists an open neighborhood $U$ of $\Lambda$ such that
$\Lambda=\{x\in U: f^n(x)\in U,\, \text{ for all }\, n\ge 0\}$;
\item[(2)]there is $\kappa > 1 $ such that
$$
\|D_xf v \| \ge \kappa \|v\|, \, \text{ for all } x \in \Lambda,\, \text{ and } v \in T_xM,
$$
where $\|\cdot\|$ is the norm induced by the Riemannian metric on $M$.
\end{enumerate}

\subsection{Markov partitions and Gibbs measures}Let $\Lambda$ be a repeller of a $C^1$ expanding map $f$. Assume that $f|\Lambda$ is topologically transitive. Then there exists a partition $\{P_1,\dots, P_k\}$ of $\Lambda$ which satisfies
\begin{enumerate}
\item $P_i\ne\emptyset$ and $\overline{\text{int }(P_i)}=P_i$;
\item $\text{int }(P_i)\cap\text{int }(P_j)=\emptyset$ if $i\ne j$;
\item for each $i$ the set $f(P_i)$ is the union of some of the sets $P_j$ from the partition.
\end{enumerate}
Here $int(\cdot)$ denotes the interior of a set relative to $\Lambda$. Such a partition is called \emph{Markov} (see \cite[Theorem 3.5.2]{pu} for proofs). A repeller admits a Markov partition into subsets of arbitrary small diameter, and in particular, we may assume that the restriction of $f$ on each $P_i$  is injective.

Given a Markov partition $\{P_1,\dots, P_k\}$, a sequence
$\mathbf{i}=(i_0 i_1\dots i_{n-1})$ where $1\le i_j\le k$ is called {\it admissible}, if
$f(P_{i_j})\supset P_{i_{j+1}}$ for $j=0,1,\dots,n-2$; and we write $|\mathbf{i}|=n$ for the length of the sequence $\mathbf{i}$. If $\mathbf{i}=(i_0 i_1\dots i_{n-1})$ is an admissible sequence, we define the cylinder
\begin{equation}\label{cyn}
P_{i_0i_1\dots i_{n-1}}=\bigcap_{j=0}^{n-1} f^{-j}(P_{i_j}).
\end{equation}
Denote by $S_n$ the collection of all admissible sequences of length $n$.

We call a (not necessarily invariant) Borel probability measure $\mu$ on $\Lambda$ a \emph{Gibbs measure} for a continuous function $\varphi$ on $\Lambda$ if there exists $C>0$ such that for every $n\in\mathbb{N}$, every admissible sequence
$(i_0i_1\dots i_{n-1})$, and $x\in P_{i_0i_1\dots i_{n-1}}$ we have
\[
C^{-1}\le\frac{\mu(P_{i_0i_1\dots i_{n-1}})}{\exp[-nP+S_n\varphi(x)]}\le C,
\]
where $P$ is a constant and
$S_n\varphi(x)=\sum_{j=0}^{n-1}\varphi(f^jx)$.

\subsection{Sub-additive and super-additive topological pressures}\label{sub-sup-pressure} See \cite{Ba06} and \cite{fh10,fh16} for more details. Consider a continuous transformation $f: X\to X$ of a compact metric space $X$ equipped with metric $d$.
Denote by $\mathcal{M}(X,f)$ and $\mathcal{M}^e(X,f)$ the set of all $f$-invariant and respectively, ergodic Borel probability measures on $X$. A sequence of continuous functions (potentials) $\Phi=\{\varphi_n\}_{n\ge 1}$ is called \emph{sub-additive}, if
$$
\varphi_{m+n}\le\varphi_n+\varphi_m\circ f^n,~~ \text{ for all } m,n\ge 1.
$$
Similarly, we call a sequence of continuous functions (potentials) $\Psi=\{\psi_n\}_{n\ge 1}$ \emph{super-additive} if $-\Psi=\{-\psi_n\}_{n\ge 1}$ is sub-additive.

For $x, y\in X$ and $n\ge 0$ define the $d_n$-metric on $X$ by
$$
d_n(x,y)=\max\{d(f^i(x),f^i(y)): 0\le i<n\}.
$$
Given $\varepsilon>0$ and $n\ge 0$, denote by
$B_n(x,\varepsilon)=\{y\in X: d_n(x,y)<\varepsilon\}$ Bowen's ball centered at $x$ of radius
$\varepsilon$ and length $n$ and we call a subset $E\subset X$
$(n,\varepsilon)$-separated if $d_n(x,y)>\varepsilon$ for any two distinct points $x,y\in E$.

Given a sub-additive sequence of continuous potentials $\Phi=\{\varphi_n\}_{n\ge 1}$, let
$$
P_n(\Phi,\varepsilon)=\sup\Big\{\sum\limits_{x \in E}
e^{\varphi_n(x)}: E \,\,\text{is an } (n,\varepsilon)-\text{separated subset of }X\Big\}.
$$
The quantity
\begin{equation}\label{pressure-sub}
P(f,\Phi)=\lim\limits_{\varepsilon\to 0}\limsup\limits_{n\to\infty}\frac1n\log P_n(\Phi,\varepsilon)
\end{equation}
is called the {\em sub-additive topological pressure} of $\Phi$. One can show (see \cite{cfh}) that it satisfies the following variational principle:
\begin{equation}\label{var-principle}
P(f,\Phi)=\sup\Big\{h_\mu(f)+\mathcal{F}_*(\Phi,\mu):\; \mu\in
\mathcal{M}(X,f),\,\,\mathcal{F}_*(\Phi,\mu)\ne -\infty\Big\},
\end{equation}
where $h_{\mu}(f)$ is the metric entropy of $f$ with respect to $\mu$ and
\begin{equation}\label{poten}
\mathcal{F}_*(\Phi,\mu)=\lim_{n\to\infty}\frac1n\int\varphi_n d\mu.
\end{equation}
Existence of the above limit can be shown by the standard sub-additive argument.

Given a super-additive sequence of continuous potentials $\Psi=\{\psi_n\}_{n\ge 1}$, we define the {\em super-additive topological pressure} of $\Psi$ by
\begin{equation}\label{pressure-supper}
P_{\mathrm{var}}(f,\Psi):=\sup \Big\{h_\mu(f)+\mathcal{F}_*(\Psi,\mu):\; \mu\in
\mathcal M(X,f) \Big\}.
\end{equation}
Note that for any super-additive sequence of continuous potentials and any $f$-invariant measure $\mu$ we have
$$
\mathcal{F}_*(\Psi,\mu)=\lim_{n\to\infty}\frac1n\int\psi_n\,d\mu=\sup\frac1n\int\psi_n\,d\mu.
$$
Similarly to the definition of the sub-additive topological pressure one can define the notion of super-additive topological pressure $P(f,\Psi)$ using $(n,\varepsilon)$-separated sets.

\begin{remark} It is natural to ask whether the variational principle for $P(f,\Psi)$ holds, i.e.,
$$
P(f,\Psi)=\sup\Big\{h_\mu(f)+\mathcal F_*(\Psi,\mu):\; \mu\in\mathcal M(X,f)\Big\}.
$$
\end{remark}
Given a continuous function $\varphi$, set $\varphi_n=\sum_{i=0}^{n-1}\varphi\circ f^i$. The above definitions of sub/super-additive topological pressures recover the standard notion of topological pressure of a single continuous potential (see \cite{wal82} for details), which we denote by $P(f,\varphi)$. The following result establishes some useful relations between the three notions of the pressure.
\begin{proposition}\label{sup-add-aprox}Let $\Psi=\{\psi_n\}_{n\ge 1}$ be a super-additive sequence of continuous potentials on $X$. Then
\[
P_{\mathrm{var}}(f,\Psi)=\lim\limits_{n\to\infty}P(f,\frac{\psi_n}{n})
=\lim_{n\to\infty}\frac1nP(f^n,\psi_n).
\]
\end{proposition}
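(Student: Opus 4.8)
The plan is to squeeze each of the two sequences $P(f,\psi_n/n)$ and $\tfrac1nP(f^n,\psi_n)$ between the constant upper bound $P_{\mathrm{var}}(f,\Psi)$ and a sequence whose lower limit is at least $P_{\mathrm{var}}(f,\Psi)$; essentially all the work sits in one of the upper bounds. For the easy directions, note first that since $\Psi$ is super-additive the numbers $\int\psi_n\,d\mu$ form a super-additive sequence, so $\tfrac1n\int\psi_n\,d\mu$ increases to $\mathcal{F}_*(\Psi,\mu)=\sup_k\tfrac1k\int\psi_k\,d\mu$ for every $\mu\in\mathcal{M}(X,f)$. Hence, by the classical variational principle for the pressure of a single potential,
\[
P\!\left(f,\tfrac{\psi_n}{n}\right)=\sup_{\mu\in\mathcal{M}(X,f)}\Big(h_\mu(f)+\tfrac1n\!\int\psi_n\,d\mu\Big)\le\sup_{\mu\in\mathcal{M}(X,f)}\big(h_\mu(f)+\mathcal{F}_*(\Psi,\mu)\big)=P_{\mathrm{var}}(f,\Psi)
\]
for every $n$; and since $\mathcal{M}(X,f)\subseteq\mathcal{M}(X,f^n)$ and $h_\mu(f^n)=n\,h_\mu(f)$ for $\mu\in\mathcal{M}(X,f)$, both $P(f,\psi_n/n)$ and $\tfrac1nP(f^n,\psi_n)$ dominate $h_\mu(f)+\tfrac1n\int\psi_n\,d\mu$, which tends to $h_\mu(f)+\mathcal{F}_*(\Psi,\mu)$; taking the supremum over $\mu$ gives $\liminf_n\ge P_{\mathrm{var}}(f,\Psi)$ for each sequence. (That $P(f,\psi_n/n)\le\tfrac1nP(f^n,\psi_n)$ also follows directly by restricting the variational principle for $f^n$ to $\mathcal{M}(X,f)$.)

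It then remains to prove, for each fixed $n$, the single inequality $\tfrac1nP(f^n,\psi_n)\le P_{\mathrm{var}}(f,\Psi)$. Set $g=f^n$ and $\Psi^{(n)}=\{\psi_{kn}\}_{k\ge1}$; then $\Psi^{(n)}$ is super-additive for $g$ (apply super-additivity of $\Psi$ with blocks of length $n$), and for every $\nu\in\mathcal{M}(X,g)$ one has $\mathcal{F}_*(\Psi^{(n)},\nu)=\sup_k\tfrac1k\int\psi_{kn}\,d\nu\ge\int\psi_n\,d\nu$. So the classical variational principle for the single potential $\psi_n$ over $g$ gives $P(f^n,\psi_n)=\sup_{\nu\in\mathcal{M}(X,g)}\big(h_\nu(g)+\int\psi_n\,d\nu\big)\le P_{\mathrm{var}}(g,\Psi^{(n)})$, and it suffices to show $\tfrac1nP_{\mathrm{var}}(f^n,\Psi^{(n)})\le P_{\mathrm{var}}(f,\Psi)$. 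Given $\nu\in\mathcal{M}(X,f^n)$, form $\mu_\nu=\tfrac1n\sum_{i=0}^{n-1}(f^i)_*\nu\in\mathcal{M}(X,f)$. Because $f^i$ is a factor map from $(X,\nu,f^n)$ onto $(X,(f^i)_*\nu,f^n)$ and $f^{n-i}$ is one in the opposite direction, these two measures have equal $f^n$-entropy, whence by affinity of entropy and the power rule $h_\nu(f^n)=h_{\mu_\nu}(f^n)=n\,h_{\mu_\nu}(f)$. Granting the potential estimate
\begin{equation}\label{eq:keyFest}
\mathcal{F}_*(\Psi^{(n)},\nu)=\lim_{k\to\infty}\tfrac1k\!\int\psi_{kn}\,d\nu\ \le\ n\,\mathcal{F}_*(\Psi,\mu_\nu),
\end{equation}
one gets $\tfrac1n\big(h_\nu(f^n)+\mathcal{F}_*(\Psi^{(n)},\nu)\big)\le h_{\mu_\nu}(f)+\mathcal{F}_*(\Psi,\mu_\nu)\le P_{\mathrm{var}}(f,\Psi)$, and taking the supremum over $\nu\in\mathcal{M}(X,f^n)$ completes the argument.

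The inequality \eqref{eq:keyFest} is the crux, and I expect it to be the main obstacle, since super-additivity only provides lower bounds for $\psi_m$ in terms of Birkhoff-type sums of shorter potentials and does not by itself relate the integral of $\psi_{kn}$ against $\nu$ to integrals against $\mu_\nu$. My plan is to use a \emph{shifted} block decomposition: for $1\le i\le n-1$ and $y\in X$, write $kn=(n-i)+(k-1)n+i$ and apply super-additivity twice along the orbit of $f^iy$ to obtain $\psi_{kn}(f^iy)\ge\psi_{n-i}(f^iy)+\psi_{(k-1)n}(f^ny)+\psi_i(f^{kn}y)$, together with the two-term version $\psi_{kn}(y)\ge\psi_n(y)+\psi_{(k-1)n}(f^ny)$ for $i=0$. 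Integrating against the $f^n$-invariant measure $\nu$ turns the last two terms into $\int\psi_{(k-1)n}\,d\nu$ and $\int\psi_i\,d\nu$, and averaging over $i=0,\dots,n-1$ yields $\int\psi_{kn}\,d\mu_\nu\ge\int\psi_{(k-1)n}\,d\nu-D_n$, where $D_n$ is a finite constant (a sum of sup-norms $\|\psi_j\|_\infty$, $1\le j\le n$) that does not depend on $k$. Dividing by $kn$ and letting $k\to\infty$ gives $\mathcal{F}_*(\Psi,\mu_\nu)\ge\tfrac1n\lim_k\tfrac1k\int\psi_{kn}\,d\nu$, which is \eqref{eq:keyFest} (and is trivial when the right-hand side is $+\infty$). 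Combined with the bounds from the first paragraph, both $P(f,\psi_n/n)$ and $\tfrac1nP(f^n,\psi_n)$ are then squeezed to the common value $P_{\mathrm{var}}(f,\Psi)$.
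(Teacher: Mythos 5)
Your proposal is correct and follows essentially the same route as the paper's proof: the easy bounds come from the classical variational principle together with $\tfrac1n\int\psi_n\,d\mu\nearrow\mathcal{F}_*(\Psi,\mu)$, and the hard inequality $\tfrac1nP(f^n,\psi_n)\le P_{\mathrm{var}}(f,\Psi)$ is obtained by averaging the pushforwards $\mu_\nu=\tfrac1n\sum_{i=0}^{n-1}(f^i)_*\nu$ and using exactly the shifted block decomposition $kn=(n-i)+(k-1)n+i$ with super-additivity applied twice, which is the paper's computation. The only cosmetic differences are that you package the key estimate as the intermediate inequality $\mathcal{F}_*(\Psi^{(n)},\nu)\le n\,\mathcal{F}_*(\Psi,\mu_\nu)$ and spell out the entropy identity $h_\nu(f^n)=n\,h_{\mu_\nu}(f)$ via factor maps, which the paper takes as standard.
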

\begin{remark}The same results hold for sub-additive topological pressure if the entropy map is upper semi-continuous, see \cite{bch}. However, we do not need this condition in the case of super-additive topological pressure defined by variational relation.
\end{remark}


\section{Main Results: bounds on dimensions and continuity of pressure}\label{dimension}

Unless otherwise stated, throughout this section we assume that $f:M\to M$ is a
$C^{1+\gamma}$ expanding map of a $m_0$-dimensional compact smooth Riemannian manifold $M$, and  $\Lambda$ is a repeller of $f$ such that $f|\Lambda$ is topologically transitive. In this section we establish a lower bound for the Hausdorff dimension as well as an upper bound for the upper box dimension of $\Lambda$ by using super-additive and sub-additive topological pressures respectively. We also prove the continuity of the sub-additive topological pressure of sub-additive singular valued potentials.

\subsection{Singular valued potentials}\label{svp} Let $S$ and $S'$ be two real linear  spaces of the same dimension, each endowed with an inner product, and let $L: S\to S'$ be a linear map. The singular values of $L$ are the square roots of the eigenvalues of $L^*L$.

Given $x\in\Lambda$ and $n\ge 1$, consider the differentiable operator $D_xf^n: T_xM\to T_{f^n(x)}M$ and  denote the singular values of $D_xf^n$ in the decreasing order by
\begin{equation}\label{sing-val}
\alpha_1(x,f^n)\ge\alpha_2(x,f^n)\ge\dots\ge\alpha_{m_0}(x,f^n).
\end{equation}
For $s\in [0,m_0]$, set
\begin{equation}\label{sing-val-psi}
\psi^{s}(x,f^n):
=\sum_{i=1}^{[s]}\log\alpha_i(x,f^n)+
(s-[s])\log\alpha_{[s]+1}(x,f^n)
\end{equation}
and
\begin{equation}\label{sing-val-phi}
\varphi^{t}(x,f^n):
=\sum_{i=m_0-[t]+1}^{m_0}\log\alpha_i(x,f^n)
+(t-[t])\log\alpha_{m_0-[t]}(x,f^n)
\end{equation}
for $t\in [0,m_0]$.
Since $f$ is smooth, the functions $x\mapsto\alpha_i(x,f^n)$, $x\mapsto\psi^s(x,f^n)$ and $x\mapsto\varphi^t(x,f^n)$ are continuous. It is easy to see that for all $n,\ell\in\mathbb{N}$
$$
\begin{aligned}
\psi^s(x,f^{n+\ell})&\le\psi^s(x, f^n)+\psi^s(f^n(x),f^\ell), \\
\varphi^t(x,f^{n+\ell})&\ge\varphi^t(x,f^n)+\varphi^t(f^n(x),f^\ell).
\end{aligned}
$$
It follows that the sequences of functions
\begin{equation}\label{sing-val-poten}
\Psi_f(s):=\{-\psi^s(\cdot,f^n)\}_{n\ge 1} \text{ and } \Phi_f(t):=\{-\varphi^t(\cdot,f^n)\}_{n\ge 1}
\end{equation}
are respectively, super-additive and sub-additive. We call them respectively \emph{super-additive and sub-additive singular valued potentials}.

We consider the \emph{super-additive} and respectively \emph{sub-additive pressure functions} given by
\begin{equation}\label{pressure-function}
P_{\text{sup}}(s):=P_{\text{var}}(f|_{\Lambda},\Psi_f(s)) \text{ and } P_{\text{sub}}(t):=P(f|_{\Lambda},\Phi_f(t)),
\end{equation}
where $P_{\text{var}}(f,\Psi_f(s))$ is the super-additive topological pressure of the potential $\Psi_f(s)$ and $P(f,\Phi_f(t))$ is the sub-additive topological pressure of the potential
$\Phi_f(t)$. It is obvious from the definition of super-and sub-additive topological pressures that $P_{\text{sup}}(s)$ and $P_{\text{sub}}(t)$ are continuous and strictly decreasing in $s$, respectively, in $t$.

\subsection{Lower bound for the Hausdorff dimension of repellers}
For repellers of $C^1$ expanding maps the following result from \cite{bch} provides an upper bound for the Hausdorff dimension of the repeller.
\begin{proposition}\label{lower-dim}
Let $\Lambda$ be a repeller for a $C^1$ expanding map $f$. Then the zero of the sub-additive pressure function $P_{\text{sub}}(t)$ gives an upper bound for the Hausdorff dimension of
$\Lambda$.
\end{proposition}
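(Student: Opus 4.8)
The plan is to derive the upper bound directly from Falconer's classical covering argument, adapted to the sub-additive singular-valued potential $\Phi_f(t)=\{-\varphi^t(\cdot,f^n)\}_{n\ge 1}$ via the Markov partition structure recalled in Section~\ref{np}. Let $t^*$ denote the unique zero of $P_{\text{sub}}(t)$, which exists and is unique since $P_{\text{sub}}$ is continuous and strictly decreasing. Fix any $t>t^*$; it suffices to show $\mathcal{H}^t(\Lambda)=0$, which then forces $\dim_H\Lambda\le t$, and letting $t\downarrow t^*$ gives the claim. Since $P_{\text{sub}}(t)<0$, the definition \eqref{pressure-sub} of the sub-additive topological pressure (applied on $\Lambda$ with $f|_\Lambda$) provides $\varepsilon_0>0$ and, for all small $\varepsilon\le\varepsilon_0$, a constant $c<0$ with $\limsup_n \frac1n\log P_n(\Phi_f(t),\varepsilon)\le c<0$, so $\sum_{n}P_n(\Phi_f(t),\varepsilon)<\infty$.

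Next I would convert $(n,\varepsilon)$-separated sums into a cover of $\Lambda$ by sets of controlled diameter. Take a Markov partition $\{P_1,\dots,P_k\}$ of diameter less than $\varepsilon_0$ on which $f$ is injective on each element, and for an admissible word $\mathbf{i}=(i_0\dots i_{n-1})$ consider the cylinder $P_{\mathbf{i}}$ from \eqref{cyn}. The key geometric estimate is that, because $f$ is expanding, each cylinder $P_{\mathbf{i}}$ of length $n$ is contained in a ball of radius comparable to $\alpha_{m_0}(x,f^n)^{-1}$ for any $x\in P_{\mathbf{i}}$ — indeed $D_xf^n$ contracts the relevant ball around $f^n(x)$ back into $P_{\mathbf{i}}$, and the smallest singular value $\alpha_{m_0}$ governs the diameter in the worst direction (using bounded distortion of the derivative cocycle, which holds since $f$ is $C^{1+\gamma}$, or simply the $\kappa>1$ expansion bound to keep things crude). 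Thus $|P_{\mathbf{i}}|\le C\,\alpha_{m_0}(x,f^n)^{-1}$, and more generally, for $t\in[0,m_0]$, a covering of $P_{\mathbf{i}}$ by roughly $\prod_{i=m_0-[t]+1}^{m_0}(\alpha_i(x,f^n)/\alpha_{m_0}(x,f^n))$-many balls of radius $C\alpha_{m_0-[t]}(x,f^n)^{-1}$... but the cleaner route is the standard one: cover $\Lambda$ by the length-$n$ cylinders, and for each cylinder use a sub-cover by balls adapted to the singular values so that the total $t$-sum over the sub-cover of $P_{\mathbf{i}}$ is bounded by $C^t\exp(-\varphi^t(x_{\mathbf{i}},f^n))=C^t e^{\varphi_n(x_{\mathbf{i}})}$ where $\{\varphi_n\}=\Phi_f(t)$. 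Picking one point $x_{\mathbf{i}}$ per cylinder gives an $(n,\varepsilon')$-separated set for a suitable $\varepsilon'$, so summing over all admissible words of length $n$ yields $\mathcal{H}^t_{\delta_n}(\Lambda)\le C^t\,P_n(\Phi_f(t),\varepsilon')$ with $\delta_n\to 0$ as $n\to\infty$ (by expansion, cylinder diameters shrink uniformly). Since the right side is summable and in particular tends to $0$ along a subsequence, $\mathcal{H}^t(\Lambda)=\lim_{\delta\to0}\mathcal{H}^t_\delta(\Lambda)=0$.

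The main obstacle is the geometric step: making precise that each length-$n$ cylinder admits a cover whose $t$-weighted diameter sum is controlled by $e^{-\varphi^t(x,f^n)}$ uniformly in $\mathbf{i}$. This requires a bounded-distortion / Lipschitz-continuity statement for the singular-value functions $x\mapsto\alpha_i(x,f^n)$ along cylinders — i.e. that $\alpha_i(x,f^n)$ and $\alpha_i(y,f^n)$ are comparable with a constant independent of $n$ when $x,y$ lie in a common length-$n$ cylinder — which is exactly where $C^{1+\gamma}$ regularity enters (it yields the usual $\sum_j d(f^jx,f^jy)^\gamma\le C$ bound and hence comparability of the derivative cocycles up to a bounded multiplicative error, and then a perturbation estimate for singular values). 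Granting this, handling the non-integer part $(t-[t])\log\alpha_{m_0-[t]}$ is a routine interpolation: one covers each cylinder by balls of an intermediate radius chosen so that the exponent $[t]$ of ``large'' directions and the fractional power of the $([t]+1)$-st direction combine to reproduce $\varphi^t$. Everything else — existence and uniqueness of $t^*$, summability from negativity of the pressure, the passage $\delta\to 0$ — is immediate from the definitions recalled above. Note this is essentially Falconer's theorem reproved in the present language; for the detailed covering lemma one may also simply cite \cite{fal94} and \cite{bch}.
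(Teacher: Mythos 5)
The paper does not prove this proposition at all: it is quoted verbatim from \cite{bch}, so the only fair comparison is with the argument there (and in \cite{fal94,zhang}). Your overall strategy --- cover $\Lambda$ by length-$n$ cylinders, cover each cylinder by balls adapted to the singular values of $D_xf^n$ \`a la Falconer, and sum the resulting $t$-weights against the negative pressure --- is the right family of ideas. But the step you yourself flag as ``the main obstacle'' is a genuine gap, not a routine distortion estimate. You claim that $C^{1+\gamma}$ regularity yields $\alpha_i(x,f^n)\asymp\alpha_i(y,f^n)$ with a constant independent of $n$ for $x,y$ in a common $n$-cylinder. This is false in general for non-conformal maps: H\"older continuity of $Df$ gives $D_{f^jx}f=D_{f^jy}f(I+E_j)$ with $\sum_j\|E_j\|<\infty$, but since the error terms are interleaved in a non-commuting product, this only controls $\prod_j\|D_{f^jy}f\|$, not $\|D_yf^n\|$ or the individual $\alpha_i(y,f^n)$; exponentially small perturbations of the factors can change the singular values of the product by an exponential factor when there is no domination. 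This is precisely why Falconer had to assume bounded distortion (the $1$-bunched condition) in \cite{fal94}, and why continuity results for sub-additive pressure (Feng--Shmerkin, and Theorem 3.6 of this paper) are nontrivial theorems. A second, related problem: the proposition is stated for $C^1$ maps, and \cite{bch} proves it at that regularity, whereas your argument invokes $C^{1+\gamma}$ from the start.

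The way \cite{zhang} and \cite{bch} circumvent both issues is to avoid any distortion statement for $\varphi^t(\cdot,f^N)$ itself. One uses $P(f|_\Lambda,\Phi_f(t))=\lim_{n}P(f|_\Lambda,-\tfrac1n\varphi^t(\cdot,f^n))=\inf_n P(f|_\Lambda,-\tfrac1n\varphi^t(\cdot,f^n))$ (the identities quoted in the proof of Theorem \ref{usc-sub-pressure}) to fix a single $n$ with $P(f|_\Lambda,-\tfrac1n\varphi^t(\cdot,f^n))<0$; then $-\varphi^t(\cdot,f^n)$ is one continuous function, its Birkhoff sums along the orbit of $f^n$ are controlled by uniform continuity (not H\"older regularity) up to an $e^{\varepsilon N}$ error, and sub-additivity gives $e^{-\varphi^t(x,f^{mn})}\le\prod_{j=0}^{m-1}e^{-\varphi^t(f^{jn}x,f^n)}$, so the covering sums are dominated by genuine Birkhoff-sum partition functions; the sub-exponential losses are absorbed by the strict negativity of the pressure. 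You would need to restructure your proof along these lines for it to close. (A smaller point: choosing one point per cylinder does not produce an $(n,\varepsilon')$-separated set, since distinct cylinders can abut; one must instead bound the number of $n$-cylinders meeting a Bowen ball, or pass through spanning sets.)
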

In view of this result it is natural to ask whether

\emph{Given a $C^1$ expanding map $f$ of an $m_0$-dimensional compact smooth Riemannian manifold $M$ with a repeller $\Lambda$, is it true that $\dim_H\Lambda\ge s^*$, where $s^*$ is the unique root of the equation $P_{\text{sup}}(s)=0$?}

We give here an affirmative answer to this question assuming that $f$ is $C^{1+\gamma}$ for some $\gamma>0$. The reason we need higher regularity of the map is that we will utilize some results from non-uniform hyperbolicity theory.

\begin{theorem}\label{dim-main}
Let $\Lambda$ be a repeller for a $C^{1+\gamma}$ expanding map $f: M\to M$. Then
$$
\dim_H\Lambda\ge s^*,
$$
where $s^*$ is the unique root of the equation $P_{\text{sup}}(s)=0$.
\end{theorem}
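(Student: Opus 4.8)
The plan is to prove the lower bound $\dim_H \Lambda \ge s^*$ by a thermodynamic–variational argument that exhibits, for every $\varepsilon > 0$, a compact invariant subset on which the local geometry forces Hausdorff dimension at least $s^* - \varepsilon$. First I would use the variational definition \eqref{pressure-supper} of $P_{\text{sup}}(s) = P_{\text{var}}(f|_\Lambda, \Psi_f(s))$: since $s^*$ is the unique root of $P_{\text{sup}}(s) = 0$ and $P_{\text{sup}}$ is continuous and strictly decreasing, for any small $\delta > 0$ we have $P_{\text{sup}}(s^* - \delta) > 0$, so there is an ergodic measure $\mu \in \mathcal{M}^e(\Lambda, f)$ with $h_\mu(f) + \mathcal{F}_*(\Psi_f(s^*-\delta), \mu) > 0$. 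Unwinding the definition of $\mathcal{F}_*$ for the super-additive singular-valued potential and writing the Lyapunov exponents of $\mu$ as $\lambda_1(\mu) \ge \cdots \ge \lambda_{m_0}(\mu) > 0$ (positivity from expansion), this inequality reads $h_\mu(f) > \psi^{s^*-\delta}_*(\mu)$, where $\psi^{s}_*(\mu) = \sum_{i=1}^{[s]} \lambda_i(\mu) + (s-[s])\lambda_{[s]+1}(\mu)$ is the piecewise-linear function of $s$ built from the exponents.

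Next I would invoke Theorem 5.1 (the strengthened Katok approximation result stated in the introduction): given this ergodic $\mu$, there is a compact $f$-invariant set $K \subset \Lambda$ carrying a dominated splitting whose expansion rates on each subbundle are within $\varepsilon$ of the corresponding Lyapunov exponents $\lambda_i(\mu)$, and with $h_{\text{top}}(f|K) > h_\mu(f) - \varepsilon$. Choosing $\delta$ and $\varepsilon$ small enough and combining with the previous paragraph, $K$ satisfies $h_{\text{top}}(f|K) > \psi^{s^*-\delta'}_*(K)$ for some $s^*-\delta'$ close to $s^*$, where now $\psi^s_*(K)$ is assembled from the (nearly constant) expansion rates along the dominated splitting of $K$. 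The point of passing to $K$ is that on a set with dominated splitting one has genuine geometric control: cylinders of a Markov partition, or Bowen balls $B_n(x,\varepsilon)$, are comparable to thin ``rectangles'' whose side lengths decay at rates governed by the expansion constants of the splitting, so the covering geometry needed for a Hausdorff-dimension lower bound is available in a way it is not on $\Lambda$ itself.

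The heart of the proof is then a mass-distribution / Frostman argument on $K$: I would push forward a measure of near-maximal entropy for $f|K$ (or use the equilibrium-type measure produced alongside Theorem 5.1) and estimate its pointwise dimension from below. Using the dominated splitting, the $\mu$-measure of a ball $B(x,r)$ is controlled by the measure of a Bowen ball $B_n(x,\varepsilon)$ with $n \approx n(x,r)$, whose measure is $\approx e^{-n h_{\text{top}}(f|K)}$ by the Shannon–McMillan–Breiman / entropy-distribution property, while its radius in the relevant directions is $\approx e^{-n\,(\text{partial sum of expansion rates})}$. Setting $s = s^* - \delta'$, the inequality $h_{\text{top}}(f|K) > \psi^{s}_*(K)$ is exactly what is needed to conclude $\mu(B(x,r)) \le C r^{s}$ for small $r$, hence $\underline d_\mu(x) \ge s$ for $\mu$-a.e.\ $x$, and therefore $\dim_H \Lambda \ge \dim_H K \ge \dim_H \mu \ge s^* - \delta'$. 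Letting $\delta' \to 0$ gives $\dim_H \Lambda \ge s^*$. I expect the main obstacle to be the ball-versus-Bowen-ball comparison: converting the multiplicative expansion data of $D_xf^n$ (singular values) into honest Euclidean ball inclusions requires the dominated splitting to control the geometry of iterated preimages uniformly, including bounded-distortion estimates that use the $C^{1+\gamma}$ hypothesis, and matching the nonlinear function $\psi^s$ (with its integer-part breakpoints) to the dimension count at the fractional value $s = s^* - \delta'$ takes some care. The secondary subtlety is ensuring the approximating measure on $K$ has the SMB-type uniformity needed for a pointwise-dimension bound rather than merely an average one; this is where the precise statement of Theorem 5.1 — that entropy, and not just exponents, is approximated — is essential.
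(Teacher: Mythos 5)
Your overall architecture coincides with the paper's: select an ergodic $\mu$ nearly realizing the supremum in $P_{\mathrm{var}}(f|_\Lambda,\Psi_f(s^*))$, invoke Theorem \ref{main} to produce a compact invariant $\Lambda_\varepsilon$ with a $\{\lambda_j(\mu)\}$-dominated splitting and $h_{\topp}(f|_{\Lambda_\varepsilon})\ge h_\mu(f)-\varepsilon$, and then run a mass-distribution argument on $\Lambda_\varepsilon$. Where you diverge is in the last step. The paper does not use the measure of maximal entropy plus SMB; it proves a separate lemma (Lemma \ref{onestep}) valid for \emph{any} repeller with a dominated splitting, using the Gibbs measure for the H\"older, additive, one-step potential $-\widetilde{\psi}^{s_1}(\cdot,f)$ at the root $s_1$ of $P(f|_\Lambda,-\widetilde{\psi}^s(\cdot,f))=0$, and then iterates over the powers $f^{2^k}$ (Lemma \ref{result}, via Proposition \ref{sup-add-aprox} and the monotonicity \eqref{monotone}) to upgrade the additive root to the super-additive one; a final Lipschitz-in-$s$ comparison of pressures converts the root $s_\varepsilon$ on $\Lambda_\varepsilon$ into $s^*-O(\varepsilon)$. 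Your route shortcuts the $f^{2^k}$ iteration by exploiting the near-constancy of the expansion rates on $\Lambda_\varepsilon$, which is legitimate, and your measure can be taken to be the Bernoulli measure of maximal entropy since $f^m|_{R_\varepsilon}$ is conjugate to a full shift, which gives the uniform (not merely a.e.) cylinder-measure estimate you worry about at the end.

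The one place your sketch, read literally, would fail is the claim that $\mu(B(x,r))$ is controlled by the measure of a \emph{single} Bowen ball. In the non-conformal setting a Euclidean ball of radius $r$ meets many level-$n$ cylinders, and the entire content of the dimension estimate is the count of how many: one chooses $n$ so that the side of the cylinder-rectangle in the $E_{d+1}$-direction (where $r_d\le s\le r_{d+1}$) is comparable to $r$, observes that each cylinder meeting $B(x,r)$ intersects the doubled ball in a rectangle of volume at least $\const\cdot r^{m_0-r_d}\prod_{j\le d}A_j(x,n)^{m_j}$, and divides the ball volume by this to bound the count. Multiplying the count by the per-cylinder measure $e^{-nh}$ then yields $\mu(B(x,r))\le C r^{s}$ precisely under $h\ge\psi^s_*$, which is your inequality. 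You correctly flag this as the main obstacle and correctly identify that it needs the dominated splitting (uniform transversality of the $E_j$ and H\"older continuity of $x\mapsto\|D_xf|_{E_j}\|$, hence bounded distortion from $C^{1+\gamma}$), so the gap is one of execution rather than of conception; but the volume-counting step should be made explicit, as it is the actual heart of the paper's Lemma \ref{onestep}.
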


As an immediate corollary of this theorem we obtain the following result that gives lower bound for the Hausdorff dimension of the repeller as the zero of the topological pressure of the potential $-\psi^{s}(\cdot, f)$, given by \eqref{sing-val-psi}.
\begin{corollary}\label{cor-dim-main}
Let $\Lambda$ be a repeller for a $C^{1+\gamma}$ expanding map $f: M\to M$. Then
$\dim_H\Lambda\ge s_1$ where $s_1$ is the unique root of the equation
$P(f,-\psi^{s}(\cdot,f))=0$.
\end{corollary}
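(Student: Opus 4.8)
The plan is to deduce Corollary \ref{cor-dim-main} from Theorem \ref{dim-main} by comparing the super-additive pressure function $P_{\text{sup}}(s)=P_{\text{var}}(f|_\Lambda,\Psi_f(s))$ with the single-potential pressure $P(f,-\psi^s(\cdot,f))$, and then using monotonicity of both functions in $s$. The key observation is that $\Psi_f(s)=\{-\psi^s(\cdot,f^n)\}_{n\ge 1}$ is a super-additive sequence whose first term is exactly $-\psi^s(\cdot,f)$, so by super-additivity $-\psi^s(x,f^n)\ge \sum_{i=0}^{n-1}\bigl(-\psi^s(f^ix,f)\bigr)$, i.e.\ the sequence $\Psi_f(s)$ dominates the additive sequence generated by the single potential $-\psi^s(\cdot,f)$. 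Consequently, for every $\mu\in\mathcal M(\Lambda,f)$ one has $\mathcal F_*(\Psi_f(s),\mu)=\lim_n\frac1n\int -\psi^s(\cdot,f^n)\,d\mu\ge \int -\psi^s(\cdot,f)\,d\mu$ (using also the ``$\sup$'' characterization of $\mathcal F_*$ for super-additive potentials noted after \eqref{pressure-supper}). Taking suprema over $\mu$ of $h_\mu(f)$ plus each side, and invoking the variational principle for the ordinary pressure together with the definition \eqref{pressure-supper} of $P_{\text{var}}$, we get
\[
P_{\text{sup}}(s)=P_{\text{var}}(f|_\Lambda,\Psi_f(s))\ \ge\ P(f,-\psi^s(\cdot,f))
\]
for every $s\in[0,m_0]$.

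Next I would record that both $s\mapsto P_{\text{sup}}(s)$ and $s\mapsto P(f,-\psi^s(\cdot,f))$ are continuous and strictly decreasing in $s$: for $P_{\text{sup}}$ this is stated right after \eqref{pressure-function}, and for the single potential it follows from the fact that $\psi^s(x,f)$ is increasing in $s$ (more singular values, each $\ge\log\kappa>0$ on the repeller by the expanding condition), so that $-\psi^s(\cdot,f)$ is strictly decreasing in $s$ and the ordinary pressure inherits strict monotonicity. Hence each of the two equations $P_{\text{sup}}(s)=0$ and $P(f,-\psi^s(\cdot,f))=0$ has a unique root; call them $s^*$ and $s_1$ respectively. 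From the inequality $P_{\text{sup}}(s)\ge P(f,-\psi^s(\cdot,f))$ evaluated at $s=s_1$ we get $P_{\text{sup}}(s_1)\ge P(f,-\psi^{s_1}(\cdot,f))=0=P_{\text{sup}}(s^*)$, and since $P_{\text{sup}}$ is strictly decreasing this forces $s_1\le s^*$.

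Finally, combining $s_1\le s^*$ with Theorem \ref{dim-main}, which gives $\dim_H\Lambda\ge s^*$, yields $\dim_H\Lambda\ge s^*\ge s_1$, which is the assertion of the corollary. I do not anticipate a serious obstacle here; the only point requiring a little care is the super-additivity inequality relating $\psi^s(\cdot,f^n)$ to the Birkhoff sums of $\psi^s(\cdot,f)$ and the resulting inequality between $\mathcal F_*(\Psi_f(s),\mu)$ and $\int -\psi^s(\cdot,f)\,d\mu$ --- this is exactly the telescoping consequence of the displayed super-additivity of $\varphi^t$ and $\psi^s$ stated in Section \ref{svp}, applied with $n$ factors of $f$. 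Everything else is bookkeeping with the variational principle and monotonicity, so the corollary follows immediately once Theorem \ref{dim-main} is in hand.
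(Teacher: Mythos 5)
Your proof is correct and follows essentially the same route as the paper: both reduce the corollary to the inequality $P_{\text{var}}(f|_\Lambda,\Psi_f(s))\ge P(f|_\Lambda,-\psi^s(\cdot,f))$ and then conclude $s^*\ge s_1$ by monotonicity together with Theorem \ref{dim-main}. The paper obtains this inequality by citing \eqref{monotone} and \eqref{limit-bound} (taking $k=0$ in the monotone sequence $\frac{1}{2^k}P(f^{2^k},-\psi^s(\cdot,f^{2^k}))$), while you derive it directly from the telescoped super-additivity bound $\mathcal F_*(\Psi_f(s),\mu)\ge\int-\psi^s(\cdot,f)\,d\mu$ and the variational principle --- the same mechanism in a slightly different packaging.
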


\subsection{Upper bound for the box dimension of repellers}
Let $\Lambda$ be a repeller for a $C^{1+\gamma}$ expanding map. We assume that $f$ admits a a dominated splitting $T_xM=E_1\oplus E_2\oplus\cdots\oplus E_k$ with $E_1\succeq E_2\succeq\cdots \succeq E_k$ (see Section 5 for the definition). We introduce a sub-additive potential $\widetilde{\Phi}_f(t):=\{-\widetilde{\varphi}^t(\cdot, f^n)\}_{n\ge 1}$ associated with the splitting as follows.


For each $i\in \{1,2,\dots, k\}$ let $\dim E_i=m_i$ and let also
$\ell_d=m_k+\cdots+m_{k-d+1}$ for $d=1,2,\dots, k$ and $\ell_0=0$.  For $t\in [0, m_0]$ and $n\ge 1$, define
\begin{eqnarray*}
\widetilde{\varphi}^t(x, f^n):=
\sum_{j=k-d+1}^{k}m_j\log m(D_xf^n|_{E_j})+(t-\ell_d)\log m(D_xf^n|_{E_{k-d}})
\end{eqnarray*}
if $\ell_d\le   t\le  \ell_{d+1}$ for some $d\in\{0,1,\dots, k-1\}$, where $m(\cdot)$ denotes the minimum norm of an operator. It is easy to see that the potential
$\widetilde{\Phi}_f(t):=\{-\widetilde{\varphi}^t(\cdot, f^n)\}_{n\ge 1}$ is sub-additive and that the corresponding pressure function $\widetilde{P}_{\mathrm{sub}}(t):=P(f|_{\Lambda}, \widetilde{\Phi}_f(t))$ is  continuous and strictly decreasing in $t$.

\begin{theorem}\label{upper-dim}
Let $\Lambda$ be a repeller for a $C^{1+\gamma}$ expanding map admitting a dominated splitting $T_\Lambda M=E_1\oplus E_2\oplus\cdots\oplus E_k$ with
$E_1\succeq E_2\succeq\cdots \succeq E_k$. Then
$$
\overline{\dim}_B\Lambda\le t^*,
$$
where $t^*$ is the unique root of the equation $\widetilde{P}_{\text{sub}}(t)=0$.
\end{theorem}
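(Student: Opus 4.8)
\textbf{Proof proposal for Theorem \ref{upper-dim}.}

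The plan is to estimate the upper box dimension of $\Lambda$ directly from a cover of $\Lambda$ by dynamically defined sets, namely the images of small balls under branches of $f^{-n}$, and then optimize over $n$. First I would fix a Markov partition $\{P_1,\dots,P_k\}$ of $\Lambda$ of small diameter so that $f$ is injective on each $P_i$, and consider the cylinders $P_{\mathbf i}$ for admissible sequences $\mathbf i$ of length $n$. On each cylinder $P_{\mathbf i}$ the inverse branch of $f^n$ is well defined and, because of the dominated splitting $E_1\succeq E_2\succeq\cdots\succeq E_k$, the image $f^{-n}(B)$ of a small ball $B$ of radius $\rho$ around a point of $f^n(P_{\mathbf i})$ is comparable to an ellipsoid whose semi-axes are of order $\rho/\sigma_j(x,f^n)$, where $\sigma_j$ runs over the expansion rates of $f^n$ along the invariant subbundles $E_j$. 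Using dominated splitting one controls these by the minimum norms $m(D_xf^n|_{E_j})$ with multiplicity $m_j$, up to a bounded distortion factor uniform in $n$ (here $C^{1+\gamma}$ regularity together with the expanding property gives the standard bounded distortion for $f^n$ restricted to cylinders). Hence each cylinder $P_{\mathbf i}$ can be covered by at most a constant multiple of
\[
\prod_{j:\,m(D_xf^n|_{E_j})<\delta^{-1}\cdot\text{(something)}}\Big(\frac{1}{\delta\, m(D_xf^n|_{E_j})}\Big)
\]
balls of radius $\delta$; counting these carefully, the number of balls of radius $\delta$ needed to cover $P_{\mathbf i}$ is bounded above by $C\,\delta^{-t}\exp(-\widetilde\varphi^{t}(x_{\mathbf i},f^n))$ for any $t\in[0,m_0]$ and any choice of point $x_{\mathbf i}\in P_{\mathbf i}$, provided $\delta$ is of the order of $m(D_xf^n|_{E_k})^{-1}$, i.e. the smallest semi-axis. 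This is exactly the point of the specific definition of $\widetilde\varphi^t$ adapted to the splitting: the piecewise-linear-in-$t$ interpolation matches the count of $\delta$-balls needed to tile an ellipsoid with those semi-axes.

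Next I would sum over all admissible $\mathbf i$ of length $n$. Choosing $\delta=\delta_n$ appropriately (roughly the smallest semi-axis over all cylinders, so $-\log\delta_n\asymp n$), I get
\[
N(\Lambda,\delta_n)\le C\,\delta_n^{-t}\sum_{\mathbf i\in S_n}\exp\big(-\widetilde\varphi^{t}(x_{\mathbf i},f^n)\big).
\]
Since the points $\{x_{\mathbf i}\}$ can be taken to be $(n,\varepsilon)$-separated for a suitable $\varepsilon$ (one per cylinder), the sum is at most $P_n(\widetilde\Phi_f(t),\varepsilon)$ up to constants, and therefore
\[
\frac{\log N(\Lambda,\delta_n)}{-\log\delta_n}\le t+\frac{\log C+\log P_n(\widetilde\Phi_f(t),\varepsilon)}{-\log\delta_n}.
\]
Taking $n\to\infty$, and using $-\log\delta_n\asymp n$ together with the definition \eqref{pressure-sub} of the sub-additive pressure, the limsup of the right-hand side is bounded by $t+\widetilde P_{\mathrm{sub}}(t)/\lambda$ for some constant $\lambda>0$ coming from the comparability $-\log\delta_n\sim\lambda n$; a slightly more careful bookkeeping (choosing $\delta_n$ per-cylinder rather than globally, i.e. running a Besicovitch-type covering argument cylinder by cylinder) removes the constant $\lambda$ and yields exactly $\overline{\dim}_B\Lambda\le t+\widetilde P_{\mathrm{sub}}(t)/(\text{expansion})$, hence $\overline{\dim}_B\Lambda\le t$ whenever $\widetilde P_{\mathrm{sub}}(t)\le 0$. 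Since $\widetilde P_{\mathrm{sub}}$ is continuous and strictly decreasing, letting $t\downarrow t^*$ gives $\overline{\dim}_B\Lambda\le t^*$.

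The main obstacle I anticipate is the geometric step: controlling the shape of $f^{-n}(B)$ precisely enough that the covering count is governed by $m(D_xf^n|_{E_j})$ rather than by the ordered singular values. This is where the dominated splitting hypothesis is essential — it guarantees that the subbundles $E_j$ are Hölder and that the expansion along $E_j$ dominates that along $E_{j+1}$ uniformly, so the "mixing" of directions across cylinders is harmless and the inverse image of a ball genuinely looks like an axis-aligned ellipsoid in the $E_1\oplus\cdots\oplus E_k$ coordinates up to bounded eccentricity corrections. Establishing the uniform bounded distortion for $D_xf^n|_{E_j}$ over cylinders (as opposed to just for $\|D_xf^n\|$) requires the $C^{1+\gamma}$ hypothesis and a telescoping/Hölder-continuity-of-the-splitting argument; this is routine but is the technical heart of the proof. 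The optimization over $t$ and the passage to the box dimension are then straightforward consequences of the monotonicity of $\widetilde P_{\mathrm{sub}}$.
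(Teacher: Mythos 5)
Your geometric picture (each cylinder sits inside an ellipsoid with semi-axes of order $m(D_xf^n|_{E_j})^{-1}$ with multiplicity $m_j$, bounded distortion from Hölder continuity of the splitting, and $\widetilde\varphi^t$ designed to count $\delta$-balls in such an ellipsoid) is the right one and matches the paper. But the summation step contains a genuine gap: the inequality $N(P_{\mathbf i},\delta_n)\le C\,\delta_n^{-t}\exp(-\widetilde\varphi^t(x_{\mathbf i},f^n))$ only holds when $\delta_n$ is comparable to the \emph{specific} semi-axis $m(D_xf^n|_{E_{k-d}})^{-1}$ of that cylinder (where $\ell_d\le t\le\ell_{d+1}$); it is false for a single radius $\delta_n$ shared by all cylinders of length $n$. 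For instance, if $\delta_n$ is far below all semi-axes of some cylinder, the true count is $\asymp\delta_n^{-m_0}\prod_j a_j^{m_j}$, which for $t<m_0$ exceeds $\delta_n^{-t}e^{-\widetilde\varphi^t}$ by a factor growing exponentially in $n$. Since different cylinders of the same length have exponentially disparate contraction rates, no global choice of $\delta_n$ works, and the displayed bound on $N(\Lambda,\delta_n)$ — and hence the comparison with $P_n(\widetilde\Phi_f(t),\varepsilon)$ at a fixed $n$ — does not go through. (A minor related slip: $m(D_xf^n|_{E_k})^{-1}$ is the \emph{largest} semi-axis, since $E_k$ is the least expanded subbundle.)

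The "more careful bookkeeping" you defer to is in fact the heart of the proof, and it changes the structure of the argument: one must fix the radius $r$ first and build a Moran-type cover of $\Lambda$ by cylinders of \emph{variable} length, namely the stopping-time family $\mathcal Q$ of cylinders $(i_0\dots i_{nq-1})$ at the first time the product $B_{k-d}(x,nq)=\prod_{j=0}^{n-1}m(D_{f^{jq}x}f^q|_{E_{k-d}})^{-1}$ drops below $r$. On such a cylinder $B_{k-d}\asymp r$, so only the directions $E_{k-d+1},\dots,E_k$ contribute to the $r$-ball count and the count is $\le\mathrm{const}\cdot r^{-s}\prod_{j}B_j(y_{\mathbf i},nq)^{m_j}B_{k-d}(y_{\mathbf i},nq)^{s-\ell_d}$. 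Because the cylinders in $\mathcal Q$ have different lengths, the resulting sum is not a pressure partition sum at a fixed time $n$; instead one uses $\widetilde P_{\mathrm{sub}}(s)<0$ for $s>t^*$ to fix $q$ with $\sum_{\mathbf i\in S_q}e^{-\widetilde\varphi^s(y_{\mathbf i},f^q)}<1$ and then exploits sub-multiplicativity of the weights along blocks of length $q$: summing the products of per-block weights over the section $\mathcal Q$ of the tree of cylinders gives a total $\le 1$, whence $N(\Lambda,r)\le\mathrm{const}\cdot r^{-s}$ and $\overline{\dim}_B\Lambda\le s$ for every $s>t^*$. Without this variable-length cover and the tree-section summation, the argument does not close.
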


\subsection{Continuity of sub-additive topological pressure}\label{cont-TP}
Let $f: M\to M$ be a $C^{1+\gamma}$ map of an $m_0$-dimensional compact smooth Riemannian manifold $M$ with metric $d$, which admits a repeller $\Lambda_f$. Let $\|\cdot\|$ be the norm on the tangent space $TM$ that is induced by the Riemannian metric on $M$.

Let $h$ be a $C^1$ map that is sufficiently $C^1$ close to $f$. It is well known that $h$ admits a repeller $\Lambda_h$ such that $(\Lambda_f, f)$ and $(\Lambda_h,h)$ are topologically conjugate that is there is a homeomorphism $\pi:\Lambda_f\to\Lambda_h$ which is close to the identity map and which commutes $f$ and $h$, i.e.,
$\pi\circ f=h\circ \pi$. Recall that $\Phi_h(t)$ is the sub-additive singular valued potential for $h$ for each $0\le t\le m_0$ (see \eqref{sing-val-poten}).

Our first result establishes upper semi-continuity of sub-additive topological pressure of singular valued potential with respect to small $C^1$ perturbations. We stress that our result only requires $f$ to be of class $C^1$.
\begin{theorem}\label{usc-sub-pressure}
Let $f: M\to M$ be a $C^1$ expanding map of an $m_0$-dimensional compact smooth Riemannian manifold $M$, and $\Lambda_f$ a repeller for $f$. Then for each $0\le t\le m_0$ the map
$f\mapsto P(f|_{\Lambda_f},\Phi_f(t))$ is upper semi-continuous, i.e., for every $\varepsilon>0$ there is a $\delta>0$ such that
$$
P(h|_{\Lambda_h},\Phi_h(t))\le P(f|_{\Lambda_f},\Phi_f(t))+\varepsilon
$$
for every $C^1$ map $h:M\to M$ with $\|f-h\|_{C^1}<\delta$.
\end{theorem}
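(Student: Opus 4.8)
The plan is to exploit the topological conjugacy $\pi\colon\Lambda_f\to\Lambda_h$ together with the variational principle \eqref{var-principle} for the sub-additive topological pressure and the upper semi-continuity of the entropy map. First I would fix $0\le t\le m_0$ and rewrite
$$
P(h|_{\Lambda_h},\Phi_h(t))=\sup\Big\{h_\nu(h)+\mathcal F_*(\Phi_h(t),\nu):\nu\in\mathcal M(\Lambda_h,h)\Big\}.
$$
Pushing forward by $\pi^{-1}$ gives a bijection $\nu\mapsto\mu:=(\pi^{-1})_*\nu$ between $\mathcal M(\Lambda_h,h)$ and $\mathcal M(\Lambda_f,f)$ that preserves metric entropy (entropy is a conjugacy invariant). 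The key estimate, then, is to compare $\mathcal F_*(\Phi_h(t),\nu)$ with $\mathcal F_*(\Phi_f(t),\mu)$: since $\mathcal F_*(\Phi_h(t),\nu)=\lim_n\frac1n\int -\varphi^t(\cdot,h^n)\,d\nu$ and likewise for $f$, and since the conjugacy $\pi$ is $C^1$-close to the identity, one shows that when $\|f-h\|_{C^1}<\delta$ the singular values $\alpha_i(x,f^n)$ and $\alpha_i(\pi(x),h^n)$ are comparable. More precisely, for any $\epsilon>0$ one can choose $N$ large and $\delta$ small so that $\big|\varphi^t(\pi(x),h^N)-\varphi^t(x,f^N)\big|\le N\epsilon$ uniformly in $x\in\Lambda_f$; by sub-additivity this transfers to the limits, giving $\mathcal F_*(\Phi_h(t),\nu)\le\mathcal F_*(\Phi_f(t),\mu)+\epsilon$ for every invariant $\mu$ simultaneously. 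Combining, $h_\nu(h)+\mathcal F_*(\Phi_h(t),\nu)\le h_\mu(f)+\mathcal F_*(\Phi_f(t),\mu)+\epsilon\le P(f|_{\Lambda_f},\Phi_f(t))+\epsilon$, and taking the supremum over $\nu$ yields the claim.

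The only genuinely delicate point is the uniform comparison of the potentials $\varphi^t(\cdot,f^n)$ and $\varphi^t(\pi(\cdot),h^n)$. Here one cannot iterate a $C^1$-closeness estimate naively, because the error in $\|D_xf^n-D_{\pi(x)}h^n\|$ can grow with $n$. Instead I would fix a single, large iterate: choose $N$ so large that, using sub-additivity of $\Phi_f(t)$ and $\Phi_h(t)$, the pressure is approximated within $\epsilon$ by the $N$-th term, i.e. $\frac1N P(f^N|_{\Lambda_f},-\varphi^t(\cdot,f^N))$ is within $\epsilon$ of $P(f|_{\Lambda_f},\Phi_f(t))$ (and similarly for $h$); this reduces the problem to the additive potential $-\varphi^t(\cdot,f^N)$ for the single map $f^N$. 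For fixed $N$, the map $h\mapsto h^N$ is continuous in the $C^1$ topology, the conjugacy $\pi$ can be taken uniformly $C^0$-close to the identity, and hence $x\mapsto D_xf^N$ and $x\mapsto D_{\pi(x)}h^N$ are uniformly close as continuous sections; since singular values depend continuously (indeed Lipschitz-continuously) on the matrix, $\varphi^t(\pi(x),h^N)$ is uniformly close to $\varphi^t(x,f^N)$ on $\Lambda_f$. Standard continuity of the classical topological pressure $P(\cdot,\cdot)$ of a single map with respect to both the dynamics (a fixed power of a $C^1$-perturbation) and the potential (uniform norm) then finishes the estimate.

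I expect the main obstacle to be organizing the $\epsilon$–$N$–$\delta$ dependence carefully so that the approximation errors do not interact: one must first pick $N$ depending on $\epsilon$ to reduce to a single power, and only then pick $\delta$ depending on $N$ and $\epsilon$ to control the perturbation of $f^N$ and the displacement of $\pi$ from the identity. A secondary technical point is that $\varphi^t$ involves a fractional combination $(t-[t])\log\alpha_{m_0-[t]}$ of singular values, so when $t\notin\mathbb Z$ one must check that the relevant singular value $\alpha_{m_0-[t]}(x,f^N)$ stays bounded away from $0$ (which it does, since $f$ is expanding, hence $D_xf^N$ is uniformly invertible) in order to guarantee the Lipschitz comparison of $\log\alpha_i$; the expanding hypothesis handles this uniformly on $\Lambda_f$ and, for $\delta$ small, on $\Lambda_h$ as well.
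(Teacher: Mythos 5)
Your second paragraph is essentially the paper's proof: the paper writes $P(f|_{\Lambda_f},\Phi_f(t))=\inf_{n\ge 1}P(f|_{\Lambda_f},-\tfrac1n\varphi^t(\cdot,f^n))$ (citing \cite{bch} and \cite{zhang}), observes that for each fixed $n$ the single-potential pressure is continuous in $f$ via the conjugacy $\pi$ (since $P(h|_{\Lambda_h},g)=P(f|_{\Lambda_f},g\circ\pi)$ and $g\circ\pi$ is uniformly close to $-\tfrac1n\varphi^t(\cdot,f^n)$), and concludes that an infimum of continuous functions is upper semi-continuous. Two remarks on your write-up. First, the intermediate claim in your first paragraph, that $\mathcal{F}_*(\Phi_h(t),\nu)\le\mathcal{F}_*(\Phi_f(t),\mu)+\epsilon$ for all invariant measures simultaneously, does not follow from the uniform comparison at a single iterate $N$: sub-additivity gives $\mathcal{F}_*(\Phi_h(t),\nu)\le\tfrac1N\int-\varphi^t(\cdot,h^N)\,d\nu\le\tfrac1N\int-\varphi^t(\cdot,f^N)\,d\mu+\epsilon$, but the last integral sits \emph{above} $\mathcal{F}_*(\Phi_f(t),\mu)$, and the convergence $\tfrac1n\int-\varphi^t(\cdot,f^n)\,d\mu\to\mathcal{F}_*(\Phi_f(t),\mu)$ is not uniform in $\mu$. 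The correct chain bounds $h_\nu(h)+\mathcal{F}_*(\Phi_h(t),\nu)$ by $P(f|_{\Lambda_f},-\tfrac1N\varphi^t(\cdot,f^N))+\epsilon$ via the classical variational principle, and then uses the choice of $N$ depending only on $f$ — which is exactly your single-iterate reduction. Second, the parenthetical ``and similarly for $h$'' is both unnecessary and dangerous: the $N$ needed to approximate $P(h|_{\Lambda_h},\Phi_h(t))$ from above within $\epsilon$ could depend on $h$; all you need is the one-sided inequality $P(h|_{\Lambda_h},\Phi_h(t))\le\tfrac1N P(h^N,-\varphi^t(\cdot,h^N))$, which holds for \emph{every} $N$ by sub-additivity, and this one-sidedness is precisely why the argument yields upper (rather than full) semi-continuity.
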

We now establish lower semi-continuity of sub-additive topological pressure of singular valued potential. Note that for this result we require $f$ and its small perturbations to be of class $C^{1+\gamma}$ for some $\gamma>0$.
\begin{theorem}\label{cont-LE1}
Let $f: M\to M$ be a $C^{1+\gamma}$ expanding map of an $m_0$-dimensional compact smooth Riemannian manifold $M$, and $\Lambda_f$ a repeller for $f$. Then for each
$0\le t\le m_0$ the map $f\mapsto P(f|_{\Lambda_f},\Phi_f(t))$ is lower semi-continuous, i.e., for every $\varepsilon>0$ there is a $\delta>0$ such that
$$
P(h|_{\Lambda_h},\Phi_h(t))\ge P(f|_{\Lambda_f},\Phi_f(t))-\varepsilon
$$
for every $C^{1+\gamma}$ map $h:M\to M$ with $\|f-h\|_{C^1}<\delta$.
\end{theorem}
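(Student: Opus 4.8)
The plan is to combine the variational principle \eqref{var-principle} for the sub-additive pressure with the approximation machinery announced in Theorems 5.1 and 5.2. Fix $t\in[0,m_0]$ and $\varepsilon>0$. Since the sub-additive potential $\Phi_f(t)$ admits an equilibrium state (as noted in the abstract and as follows from upper semi-continuity of the entropy map for expanding maps via \cite{cfh}), choose an ergodic measure $\mu\in\mathcal M^e(\Lambda_f,f)$ with
\[
h_\mu(f)+\mathcal F_*(\Phi_f(t),\mu)\ge P(f|_{\Lambda_f},\Phi_f(t))-\tfrac{\varepsilon}{3}.
\]
The quantity $\mathcal F_*(\Phi_f(t),\mu)=\lim_n\frac1n\int(-\varphi^t(\cdot,f^n))\,d\mu$ is, by Kingman's subadditive ergodic theorem and the definition \eqref{sing-val-phi}, an explicit combination of the $t$ smallest Lyapunov exponents of $\mu$ (counted with multiplicity, with a fractional term). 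The first step is therefore to apply Theorem 5.2 to $\mu$: for any $\eta>0$ and any $C^{1+\gamma}$ perturbation $h$ with $\|f-h\|_{C^1}$ small there is a compact $h$-invariant set $K=K_h\subset\Lambda_h$ carrying a dominated splitting whose expansion rates are within $\eta$ of the Lyapunov exponents of $\mu$, and with $h_{\topp}(h|K)\ge h_\mu(f)-\eta$.

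Next I would transfer these data into a lower bound for $P(h|_{\Lambda_h},\Phi_h(t))$. By the variational principle \eqref{var-principle} applied to $(\Lambda_h,h)$ it suffices to produce an $h$-invariant measure supported on $K_h$ whose free energy for $\Phi_h(t)$ is at least $P(f|_{\Lambda_f},\Phi_f(t))-\varepsilon$. Pick, via the variational principle for topological entropy, an $h$-invariant measure $\nu$ on $K_h$ with $h_\nu(h)\ge h_{\topp}(h|K_h)-\eta\ge h_\mu(f)-2\eta$. Because $K_h$ has a dominated splitting with controlled expansion rates, the singular values $\alpha_i(x,h^n)$ for $x\in K_h$ are comparable (on the logarithmic scale, up to $O(n\eta)$) to the products of the norms of $Dh$ along the subbundles; hence for every invariant $\nu$ on $K_h$,
\[
\mathcal F_*(\Phi_h(t),\nu)\ge \mathcal F_*(\Phi_f(t),\mu)-C\eta
\]
for a constant $C=C(m_0)$, where the comparison uses that the $t$ smallest expansion rates on $K_h$ approximate the $t$ smallest Lyapunov exponents of $\mu$. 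Combining, $h_\nu(h)+\mathcal F_*(\Phi_h(t),\nu)\ge h_\mu(f)+\mathcal F_*(\Phi_f(t),\mu)-(2+C)\eta\ge P(f|_{\Lambda_f},\Phi_f(t))-\tfrac\varepsilon3-(2+C)\eta$, so choosing $\eta$ small enough (and then $\delta$ accordingly small) gives the claim.

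The main obstacle is the second step: controlling $\mathcal F_*(\Phi_h(t),\nu)$ for measures on the perturbed set $K_h$ in terms of $\mathcal F_*(\Phi_f(t),\mu)$. Singular values of a long composition $D_xh^n$ are genuinely non-local objects and are not simply products of norms of the derivative along a splitting; one needs the domination on $K_h$ to be uniform (with exponent independent of $h$ for $h$ close to $f$) so that the discrepancy between $\log\alpha_i(x,h^n)$ and the corresponding sum of subbundle-expansion logarithms is $o(n)$ uniformly — and moreover the error should be controllable by $\eta$, not merely be $o(n)$. This requires a careful quantitative version of the ``singular values versus dominated splitting'' estimate (the kind of reasoning already implicit in the proof of Theorem \ref{upper-dim}), together with the fact, supplied by Theorem 5.2, that the dominated splitting on $K_h$ and its expansion rates depend nicely on the perturbation. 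A secondary technical point is to ensure that the topological conjugacy $\pi:\Lambda_f\to\Lambda_h$ and the construction of $K_h$ are compatible so that ``$\mu$ close to a measure on $K_h$'' makes literal sense in the weak$^*$ topology; this is where the strengthened Katok-type construction of Theorem 5.2, rather than the classical one, is essential.
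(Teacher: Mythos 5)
Your proposal is correct and follows essentially the same route as the paper's proof in Section 6.6: take an (ergodic) equilibrium measure $\mu$ for $\Phi_f(t)$, apply Theorem 5.2 to get the compact set $\mathcal{Q}_\varepsilon(h)\subset\Lambda_h$ with large entropy and a dominated splitting whose rates track the Lyapunov exponents of $\mu$, and then feed an ergodic measure of nearly maximal entropy on that set into the variational principle for $P(h|_{\Lambda_h},\Phi_h(t))$. The ``main obstacle'' you flag is not actually an obstacle: for an ergodic $h$-invariant $\nu$ on $\mathcal{Q}_\varepsilon(h)$, the quantity $\mathcal F_*(\Phi_h(t),\nu)$ equals, by the Oseledets/Kingman theorems, minus the corresponding combination of the Lyapunov exponents of $\nu$ (no finite-time comparison of singular values with subbundle norms is needed), and statement (3) of Theorem 5.2 pins those exponents to within $\varepsilon$ of the exponents of $\mu$, which is exactly how the paper concludes.
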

Combining Theorems \ref{usc-sub-pressure} and \ref{cont-LE1}, we have the following result.
\begin{theorem}\label{con-top}
Let $f: M\to M$ be a $C^{1+\gamma}$ expanding map of a $m_0$-dimensional compact smooth Riemannian manifold $M$, and $\Lambda_f$ a repeller for $f$. Then for each
$0\le t\le m_0$ the map $f\mapsto P(f|_{\Lambda_f}, \Phi_f(t))$ is continuous, i.e., for every $\varepsilon>0$ there is a $\delta>0$ such that
$$
P(f|_{\Lambda_f},\Phi_f(t))-\varepsilon\le P(h|_{\Lambda_h},\Phi_h(t))\le P(f|_{\Lambda_f},\Phi_f(t))+\varepsilon
$$
for every $C^{1+\gamma}$ map $h:M\to M$ with $\|f-h\|_{C^1}<\delta$.
\end{theorem}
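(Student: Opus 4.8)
The plan is simply to assemble Theorem \ref{con-top} from the two one-sided estimates already in place. Given $\varepsilon>0$, apply Theorem \ref{usc-sub-pressure} to obtain $\delta_1>0$ such that $\|f-h\|_{C^1}<\delta_1$ implies $P(h|_{\Lambda_h},\Phi_h(t))\le P(f|_{\Lambda_f},\Phi_f(t))+\varepsilon$, and apply Theorem \ref{cont-LE1} to obtain $\delta_2>0$ such that $\|f-h\|_{C^1}<\delta_2$ implies $P(h|_{\Lambda_h},\Phi_h(t))\ge P(f|_{\Lambda_f},\Phi_f(t))-\varepsilon$. Setting $\delta=\min\{\delta_1,\delta_2\}$ yields the two-sided bound for every $C^{1+\gamma}$ map $h$ with $\|f-h\|_{C^1}<\delta$, which is exactly the claimed continuity.

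A small point worth noting is the compatibility of hypotheses: Theorem \ref{usc-sub-pressure} requires only that $f$ (and the perturbations $h$) be $C^1$, while Theorem \ref{cont-LE1} needs $f$ and $h$ to be $C^{1+\gamma}$. Since Theorem \ref{con-top} already restricts attention to $C^{1+\gamma}$ maps $f$ and $C^{1+\gamma}$ perturbations $h$, both ingredients apply on this class without any further argument. One should also recall, from the discussion in Section \ref{cont-TP}, that for $h$ sufficiently $C^1$-close to $f$ the repeller $\Lambda_h$ persists and $(\Lambda_h,h)$ is topologically conjugate to $(\Lambda_f,f)$, so that the quantity $P(h|_{\Lambda_h},\Phi_h(t))$ is well defined for all such $h$; this is implicitly used in both Theorems \ref{usc-sub-pressure} and \ref{cont-LE1} and requires nothing new here.

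There is essentially no obstacle: the entire content of the continuity statement is contained in the lower semi-continuity estimate of Theorem \ref{cont-LE1}, whose proof (carried out in Section \ref{s6.6} via the modified Katok approximation of Theorems 5.1 and 5.2, applied to an equilibrium state $\mu$ for $\Phi_f(t)$) is the hard part of this circle of ideas. The upper semi-continuity of Theorem \ref{usc-sub-pressure} is the comparatively soft direction, following from continuity of the singular values under $C^1$ perturbation together with the conjugacy $\pi$. Once those two theorems are established, Theorem \ref{con-top} is a formal consequence, and its proof is the two-line combination above.
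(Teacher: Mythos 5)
Your proof is correct and matches the paper exactly: Theorem \ref{con-top} is stated there as an immediate consequence of combining Theorems \ref{usc-sub-pressure} and \ref{cont-LE1}, which is precisely your two-line argument with $\delta=\min\{\delta_1,\delta_2\}$. Your remarks on the compatibility of the smoothness hypotheses and the persistence of the repeller are accurate and consistent with the paper's setup.
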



\section{Carath\'{e}odory singular dimension}\label{cont-cs-dim}

In this section we discuss an approach to the notion of sub-additive topological pressure which is more general than the one presented in Section \ref{sub-sup-pressure} and which is based on the Carath\'{e}odory construction described in \cite{Pes97}. This more general approach allows one to introduce sub-additive topological pressure for arbitrary subsets of the repeller (not necessarily compact or invariant).

Let $\Lambda$ be a repeller for an expanding map $f$ and let $\Phi=\{\varphi_n\}_{n\ge 1}$ be a sub-additive sequence of continuous functions on $\Lambda$. Given a subset $Z\subset \Lambda$ and $s\in\mathbb{R}$, let
\begin{eqnarray}\label{c-p}
m(Z,\Phi, s, r):=\lim_{N\to\infty}\inf\Big\{\sum_i\exp\bigr(-sn_i+\sup_{y\in B_{n_i}(x_i,r)}\varphi_{n_i}(y)\bigr)\Big\},
\end{eqnarray}
where the infimum is taken over all collections $\{B_{n_i}(x_i,r)\}$ of Bowen's balls with $x_i\in \Lambda$, $n_i\ge N$ that cover $Z$. It is easy to show that there is a jump-up value
\[
P_Z(f, \Phi, r)=\inf\{s: m(Z,\varphi,s,r)=0\}=\sup\{s: m(Z,\Phi,s,r)=+\infty\}.
\]
The quantity
\begin{equation}\label{top-pres-car}
P_Z(f,\Phi)=\lim_{r\to 0}P_Z(f,\Phi, r)
\end{equation}
is called the \emph{topological pressure of $\Phi$ on the subset $Z$}, see \cite{fh16} for the weighted version of this quantity.

Since the map $f$ is expanding, the entropy map $\mu\mapsto h_{\mu}(f)$ is upper semi-continuous and the topological entropy is finite. This implies that
 $P_\Lambda(f,\Phi)=P(f|_\Lambda,\Phi)$ i.e., the topological pressure on the whole repeller given by \eqref{top-pres-car} is the same as the topological pressure defined by \eqref{pressure-sub} (see \cite[Proposition 4.4]{cfh} or \cite[Theorem 6.4]{fh16}).

We consider the case when $\Phi$ is the sub-additive singular valued potential, i.e.,
$\Phi=\{-\varphi^{\alpha}(\cdot,f^n)\}_{n\ge 1}$. Let $P(\alpha)=P(f|_\Lambda,\Phi)$ and let
$\alpha_0$ be the zero of Bowen's equation $P(\alpha)=0$. For each sufficiently small $r>0$, it follows from \eqref{c-p} that
\[
m(Z,\Phi, P(\alpha_0), r)=\lim_{N\to\infty}\inf\Big\{\sum_i\exp\bigr(\sup_{y\in B_{n_i}(x_i,r)}-\varphi^{\alpha_0}(y,f^{n_i})\bigr)\Big\},
\]
where the infimum is taken over all collections $\{B_{n_i}(x_i,r)\}$ of Bowen's balls with $x_i\in\Lambda$, $n_i\ge N$ that cover $Z$. This motivates us to introduce the following notion: for every $Z\subset \Lambda$ let
\[
m(Z,\alpha,r):=\lim_{N\to\infty}\inf\Big\{\sum_i\exp\bigr(\sup_{y\in B_{n_i}(x_i,r)}-\varphi^{\alpha}(y,f^{n_i})\bigr)\Big\},
\]
where the infimum is taken over all collections $\{B_{n_i}(x_i,r)\}$ of Bowen's balls with $x_i\in\Lambda$, $n_i\ge N$ that cover $Z$. It is easy to see that there is a jump-up value
\begin{equation}\label{dimC}
\dim_{C,r}(Z):=\inf\{\alpha: m(Z,\alpha)=0\}=\sup\{\alpha: m(Z,\alpha)=+\infty\},
\end{equation}
which we call the \emph{Carath\'{e}odory singular dimension of $Z$}.

We shall show that for $Z=\Lambda$ the Carath\'{e}odory singular dimension of $Z$ is exactly the zero of Bowen's equation $P(\alpha)=0$ and that it varies continuously with $f$.
\begin{theorem}\label{carath}
Let $f:M\to M$ be a $C^{1+\gamma}$ expanding map with repeller
$\Lambda_f$. Assume that $f|\Lambda_f$ is topologically transitive. Then
\begin{enumerate}
\item $\dim_{C,r}\Lambda_f=\alpha_0$ for all sufficiently small $r>0$, where $\alpha_0$ is the unique root of Bowen's equation $P(\alpha)=0$;
\item if $h$ is a $C^{1+\gamma}$ map that is $C^1$ close to $f$, then
$\dim_C(\Lambda_h)$ varies continuously with $h$.
\end{enumerate}
\end{theorem}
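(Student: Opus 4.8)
The plan is to reduce everything to the already-established thermodynamic identity $P_\Lambda(f,\Phi)=P(f|_\Lambda,\Phi)$ for the sub-additive singular valued potential $\Phi=\{-\varphi^\alpha(\cdot,f^n)\}_{n\ge1}$, together with the continuity results for the pressure function proved in Theorem \ref{con-top}. For part (1), I would first observe that by definition $m(Z,\alpha,r)=m(Z,\Phi,P(\alpha),r)$ where on the right $P(\alpha)=P(f|_\Lambda,\Phi)$ is used merely as a numerical value of $s$ in the Carath\'{e}odory construction \eqref{c-p}; this is the ``motivation'' displayed before \eqref{dimC} made precise. Hence $\dim_{C,r}(\Lambda)$ is the value of $\alpha$ at which the function $s\mapsto P_\Lambda(f,\{-\varphi^\alpha(\cdot,f^n)\},r)-P(\alpha)$ changes sign, evaluated along the diagonal $s=P(\alpha)$. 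Since $\alpha\mapsto P(\alpha)$ is continuous and strictly decreasing (stated in Section \ref{svp}), and since for $r$ small enough $P_\Lambda(f,\Phi,r)$ is uniformly close to $P_\Lambda(f,\Phi)=P(f|_\Lambda,\Phi)$, the crossing occurs exactly at $\alpha=\alpha_0$, the root of $P(\alpha)=0$: when $\alpha<\alpha_0$ we have $P(\alpha)>0$, forcing $m(\Lambda,\alpha,r)=+\infty$, and when $\alpha>\alpha_0$ we have $P(\alpha)<0$, forcing $m(\Lambda,\alpha,r)=0$. The delicate point to check is that the $r$-dependence does not shift the jump: I would argue that for $\Lambda$ compact and invariant, $P_\Lambda(f,\Phi,r)$ is in fact independent of $r$ for $r$ below the Lebesgue number of a generating Markov partition, using the bounded-distortion / expansion properties of $f$ and the identification with the separated-set definition; this is the standard argument behind the cited \cite[Proposition 4.4]{cfh}, adapted to keep track of the shift $-s n_i$.

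For part (2), the strategy is to transport the Carath\'{e}odory construction for $h$ on $\Lambda_h$ back to $\Lambda_f$ via the conjugacy $\pi:\Lambda_f\to\Lambda_h$ from Section \ref{cont-TP}, and then invoke part (1) applied to $h$: $\dim_C(\Lambda_h)=\alpha_0(h)$, the root of Bowen's equation $P(h|_{\Lambda_h},\Phi_h(\alpha))=0$. By part (1), $\dim_C(\Lambda_h)$ is independent of $r$ for $h$ close to $f$ (the relevant Lebesgue number is uniform on a $C^1$-neighborhood), so continuity of $h\mapsto\dim_C(\Lambda_h)$ is equivalent to continuity of $h\mapsto\alpha_0(h)$. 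This is now an immediate consequence of Theorem \ref{con-top}: the family of functions $\alpha\mapsto P(h|_{\Lambda_h},\Phi_h(\alpha))$ converges uniformly on $[0,m_0]$ to $\alpha\mapsto P(f|_{\Lambda_f},\Phi_f(\alpha))$ as $h\to f$ in $C^1$ (uniformity in $\alpha$ follows from the fact that $\Phi_h(\alpha)$ depends continuously, even Lipschitz-continuously, on $\alpha$, with constants controlled by $\|h\|_{C^1}$), and each such function is strictly decreasing with a nondegenerate sign change; a standard implicit-function-type argument for roots of uniformly convergent monotone families then gives $\alpha_0(h)\to\alpha_0(f)$.

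The main obstacle, and the step I would spend the most care on, is the $r$-independence claim in part (1) — i.e., showing that the jump value $\dim_{C,r}(\Lambda)$ does not depend on $r$ (for small $r$) and equals $\alpha_0$, rather than merely that $\lim_{r\to0}$ of it does. The subtlety is that the Carath\'{e}odory sum in \eqref{c-p}–\eqref{dimC} mixes covers by Bowen balls of different lengths $n_i$, so a naive comparison of covers at scales $r$ and $r'$ changes each $n_i$ and hence the exponents $\sup_{B_{n_i}}(-\varphi^\alpha(\cdot,f^{n_i}))$ in a way that must be absorbed. I would handle this by the usual device: refine any $(N,\,\cdot\,)$-cover by Bowen balls into one subordinate to iterates of a fixed Markov partition with diameters $\le r$, use that on such a partition element of ``depth'' $n$ one has two-sided control $e^{-\varphi^\alpha(x,f^n)}\asymp \operatorname{diam}$-type estimates with multiplicative constants independent of $n$ (bounded distortion for the $C^{1+\gamma}$ map), and conclude that the two Carath\'{e}odory measures $m(\Lambda,\alpha,r)$ and $m(\Lambda,\alpha,r')$ are finite or infinite together. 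Everything else — the sign analysis via monotonicity of $P(\alpha)$, the pullback under $\pi$, and the appeal to Theorem \ref{con-top} — is then routine.
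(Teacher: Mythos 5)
Your proposal is correct in substance and follows the same overall route as the paper, but two points deserve comment. First, the identity you start from, $m(Z,\alpha,r)=m(Z,\Phi,P(\alpha),r)$, is only valid at $\alpha=\alpha_0$ (where $P(\alpha_0)=0$); for general $\alpha$ the correct identification is $m(Z,\alpha,r)=m(Z,\Phi_f(\alpha),s,r)\big|_{s=0}$, i.e.\ the Carath\'eodory measure of the potential $\Phi_f(\alpha)$ evaluated at the parameter value $s=0$. Your sign analysis survives this correction, since $m(\Lambda_f,\alpha,r)$ is $+\infty$ or $0$ according to whether $0$ lies below or above the jump value $P(f|_{\Lambda_f},\Phi_f(\alpha),r)$. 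Second, the step you single out as the main obstacle --- proving $r$-independence of the Carath\'eodory measures by refining covers through a Markov partition with bounded distortion --- is not needed and is not what the paper does. The paper works with each small $r$ separately: for fixed $t<\alpha_0$ one has $P_{\text{sub}}(t)>0$, and since $P(f|_{\Lambda_f},\Phi_f(t),r)\to P_{\text{sub}}(t)$ as $r\to0$, for all small $r$ the jump value exceeds $P_{\text{sub}}(t)-\delta>0$, so $m(\Lambda_f,\Phi_f(t),P_{\text{sub}}(t)-\delta,r)=+\infty$; because every $n_i\ge N$ and the exponent $P_{\text{sub}}(t)-\delta$ is \emph{positive}, deleting the factors $e^{-(P_{\text{sub}}(t)-\delta)n_i}$ multiplies the covering sums by at least $e^{N(P_{\text{sub}}(t)-\delta)}\to\infty$, forcing $m(\Lambda_f,t,r)=+\infty$; the case $t>\alpha_0$ is symmetric with the opposite sign. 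No comparison between two scales $r$ and $r'$ ever enters, so the bounded-distortion machinery is an avoidable detour (though it would also work). Your treatment of part (2) --- reducing to continuity of the root $\alpha_0(h)$ of Bowen's equation via Theorem \ref{con-top} and monotonicity of the pressure in $\alpha$ --- is exactly the paper's argument; the conjugacy $\pi$ is not needed there once part (1) is in place.
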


\begin{remark} The first statement of Theorem \ref{carath} shows that the Carath\'{e}odory singular dimension of the repeller $\Lambda_f$ is independent of the parameter $r$ for small values of $r>0$. This allows us to use the notation $\dim_C(\Lambda_h)$ for the Carath\'{e}odory singular dimension of the repeller $\Lambda_h$ in the second statement.
\end{remark}


\section{Approximating Lyapunov exponents of expanding maps}\label{ly-exp}

In this section we show that given an ergodic measure $\mu$ on a repeller $\Lambda$ for a $C^{1+\gamma}$ expanding map $f$, its Lyapunov exponents can be well approximated by Lyapunov exponents on a compact invariant subset which carries sufficiently large topological entropy. This statement will serve as a main technical tool in proving our main theorems. We stress that the requirement that the map is of class of smoothness
$C^{1+\gamma}$ is crucial as it allows us to utilize some powerful results of non-uniform hyperbolicity theory.

In what follows $M$ is a $m_0$-dimensional compact smooth Riemannian manifold. We recall the definition of a dominated splitting. Consider a $C^{1+\gamma}$ diffeomorphism of a compact smooth manifold $M$ of dimension $m_0$ and let $\Lambda\subset M$ be a compact invariant set. We say that $\Lambda$ admits a \emph{dominated splitting} if there is continuous invariant splitting $T_\Lambda M=E\oplus F$ and constants $C>0$,
$\lambda\in(0,1)$ such that for each $x\in\Lambda$, $n\in\mathbb{N}$, $0\ne u\in E(x)$, and $0\ne v\in F(x)$
\[
\frac{\|D_xf^n(u)\|}{\|u\|}\le C\lambda^n \frac{\|D_xf^n(v)\|}{\|v\|}.
\]
We write $E\preceq F$ if $F$ dominates $E$. Further, given $0<\ell\le m_0$, we call a continuous invariant splitting $T_\Lambda M=E_1\oplus\cdots\oplus E_\ell$ \emph{dominated} if there are numbers $\lambda_1<\lambda_2<\dots<\lambda_\ell$, constants $C>0$ and $0<\varepsilon<\min_i(\lambda_{i+1}-\lambda_i)/100$ such that for every $x\in \Lambda$, $n\in \mathbb{N}$ and $1\le j\le\ell$ and each unit vector $u\in E_j(x)$
\begin{equation}\label{main-lemma}
C^{-1}\exp(n(\lambda_j-\varepsilon))\le \|D_xf^n(u)\| \le C\exp(n(\lambda_j+\varepsilon)).
\end{equation}
In particular, $E_1\preceq\cdots \preceq E_\ell$. We shall also use the term
\emph{$\{\lambda_j\}$-dominated} when we want to stress dependence of the numbers
$\{\lambda_j\}$.

\begin{theorem}\label{main}
Let $f$ be a $C^{1+\gamma}$ expanding map of $M$ which admits a repeller $\Lambda$, and let $\mu$ be an ergodic measure on $\Lambda$ with $h_{\mu}(f)>0$. Then for any $\varepsilon>0$ there exists an $f$-invariant compact subset $\mathcal{Q}_\varepsilon\subset\Lambda$ such that the following statements hold:
\begin{enumerate}
\item $h_{\text{top}}(f|_{\mathcal{Q}_\varepsilon})\ge h_{\mu}(f)-\varepsilon$;
\item there is a $\{\lambda_j(\mu)\}$-dominated splitting
$T_x M=E_1\oplus E_2\oplus\cdots\oplus E_\ell$ over
$\mathcal{Q}_\varepsilon$ where $\lambda_1(\mu)<\dots<\lambda_\ell(\mu)$ are distinct Lyapunov exponents of $f$ with respect to the measure $\mu$.
\end{enumerate}
\end{theorem}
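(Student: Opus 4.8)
I expect Theorem \ref{main} (Theorem 5.1) to be proved by running, \emph{on one and the same family of orbit blocks}, two constructions in parallel: the classical Katok entropy (horseshoe) construction, which yields statement (1), and a cone‑field construction, which yields statement (2). First I would fix the Oseledets data. Since $f$ is expanding, all Lyapunov exponents of $\mu$ are $\ge\log\kappa>0$; write the distinct ones as $\lambda_1(\mu)<\cdots<\lambda_\ell(\mu)$ and let $T_xM=E_1(x)\oplus\cdots\oplus E_\ell(x)$ be the (measurable, invariant) Oseledets splitting. Choose $\varepsilon'>0$ very small compared to $\varepsilon$ and to $\min_i(\lambda_{i+1}(\mu)-\lambda_i(\mu))$. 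By Lusin's and Egorov's theorems together with the Birkhoff ergodic theorem there is a compact $A\subset\Lambda$ with $\mu(A)>1-\tau$, for a small $\tau$ to be fixed last, on which: the maps $x\mapsto E_j(x)$ are continuous, the angles between the $E_j(x)$ are bounded below, the uniform Oseledets estimates $C_0^{-1}e^{n(\lambda_j(\mu)-\varepsilon')}\le\|D_xf^n(u)\|\le C_0e^{n(\lambda_j(\mu)+\varepsilon')}$ hold for unit $u\in E_j(x)$ whenever $x,f(x),\dots,f^n(x)\in A$, and the Brin--Katok local entropy of $\mu$ exceeds $h_\mu(f)-\varepsilon'$.

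Next, by a routine adaptation of Katok's counting argument (the Brin--Katok estimate inside a Rokhlin tower of large height $N$ combined with the ergodic theorem for $\mathbf 1_A$), for every large enough $N$ there are $\delta=\delta(\varepsilon')>0$ and an $(N,\delta)$-separated set $G_N\subset A\cap f^{-N}A$ with $\#G_N\ge e^{N(h_\mu(f)-\varepsilon)}$ such that each block $x,f(x),\dots,f^{N-1}(x)$, $x\in G_N$, lies in $A$ for all but at most $\tau'N$ indices, where $\tau'\to0$ as $\tau\to0$. Fix a Markov partition $\{P_1,\dots,P_k\}$ of $\Lambda$ (of a transitive sub‑repeller carrying $\mu$ if necessary) of diameter $<\delta$ with each $f|_{P_i}$ injective, and the cylinders \eqref{cyn}. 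Each $x\in G_N$ yields an admissible word $w(x)\in S_N$; these words are pairwise distinct by $(N,\delta)$-separation, and using transitivity of $f|_\Lambda$ one may append to each a bounded bridge so that arbitrary concatenations become admissible, without affecting the exponential rate, which stays $\ge h_\mu(f)-\varepsilon$ for $N$ large. Let $\cQ_\varepsilon$ be the set of points whose itinerary is an infinite concatenation of these (bridged) words, and replace it by $\bigcup_{j=0}^{N-1}f^j(\cQ_\varepsilon)$ to make it $f$-invariant; it is compact. The block coding presents $f^N|_{\cQ_\varepsilon}$ as (essentially) a full shift on $\#G_N$ symbols, so $h_{\text{top}}(f|_{\cQ_\varepsilon})\ge\frac1N\log\#G_N\ge h_\mu(f)-\varepsilon$, which is (1).

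For (2) I would extend the subbundles $E_j$ continuously from $A$ to a neighborhood and, for each $d\in\{1,\dots,\ell-1\}$, take a thin cone field $\cC_d$ about $E_{d+1}\oplus\cdots\oplus E_\ell$ and a thin cone field $\cC'_d$ about $E_1\oplus\cdots\oplus E_d$. Because $\varepsilon',\tau'$ are small and $N$ is large, along any good block the cocycle $D_xf^N$ sends $\cC_d$ into the interior of $\cC_d$ at the endpoint and $(D_xf^N)^{-1}$ sends $\cC'_d$ at the endpoint into the interior of $\cC'_d$ at $x$, with vectors in the $E_j$-cones expanded by a factor $\approx e^{N\lambda_j(\mu)}$ up to error $\varepsilon$: the at most $\tau'N$ steps spent off $A$ contribute only a factor $e^{\pm\tau'N\log\|Df^{\pm1}\|_\infty}$, and the finitely many boundary steps contribute a uniform constant. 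Since every orbit in $\cQ_\varepsilon$ is an infinite concatenation of good blocks, these cone conditions hold along all of $\cQ_\varepsilon$, and the standard cone‑field criterion produces a dominated splitting $T_{\cQ_\varepsilon}M=\widehat E_1\oplus\cdots\oplus\widehat E_\ell$, where $\widehat E_{\le d}(x)=\bigcap_{n\ge0}(D_xf^{nN})^{-1}\cC'_d(f^{nN}x)$ and $\widehat E_{\ge d+1}(x)$ is the intersection of the images of $\cC_d$ along a backward orbit of $x$ inside $\cQ_\varepsilon$ (such orbits exist since $f|_{\cQ_\varepsilon}$ is a subshift of finite type, and the intersection is independent of the chosen one by uniqueness of dominated splittings). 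Propagating the expansion windows through a concatenation of $n/N$ blocks and absorbing transients into $C$ yields exactly \eqref{main-lemma} with rates $\lambda_j(\mu)$ and error $\varepsilon$ (shrinking $\varepsilon$ to $\min\{\varepsilon,\min_i(\lambda_{i+1}(\mu)-\lambda_i(\mu))/100\}$ at the outset if needed).

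The main obstacle is that the entropy construction and the cone construction must be performed \emph{on the same blocks}: classical Katok theory only requires the blocks to be numerous and well separated, whereas here they must in addition have uniformly controlled singular‑value behavior, which forces $N$ to be chosen large relative to the uniform constant $C_0$ while $\tau$ (hence the off‑$A$ fraction $\tau'$) is chosen small — a balance one must set up carefully to land all rates within $\varepsilon$ of $\lambda_j(\mu)$ and to get the domination constants. A secondary difficulty, absent from the invertible case of \cite{acw} (where one uses shadowing), is non‑invertibility: the fast subbundles $\widehat E_{\ge d+1}$ cannot be defined by iterating a single inverse branch, and it is precisely the subshift structure of $f|_{\cQ_\varepsilon}$, which supplies backward orbits inside $\cQ_\varepsilon$, together with uniqueness of dominated splittings, that makes the Cantor‑type construction go through.
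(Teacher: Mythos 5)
Your overall strategy matches the paper's: extract, via Brin--Katok on a regular (Pesin) set, exponentially many well-separated orbit blocks with uniformly controlled Oseledets data; assemble them into a compact invariant set conjugate to a full shift to obtain the entropy bound; and run invariant cone fields around the extended Oseledets subbundles to obtain the dominated splitting. The two places where you genuinely diverge are the assembly mechanism and the bookkeeping for the cones. For the assembly, the paper does not bridge words by transitivity: it pigeonholes the $(n,\rho)$-separated set by the first return time $k\in[n,(1+\varepsilon)n)$ to the element $\mathcal P(x)$ of a fixed partition subordinate to $\rho/4$-balls, keeps the most populous return time $m$ and the most populous element $\mathcal P(x_i)$, and observes that the inverse branches $f_x^{-m}$ then map $B(x_i,\rho/2)$ into pairwise disjoint subsets of itself; nesting these gives a Cantor set $R_\varepsilon$ on which $f^m$ is automatically a full shift. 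This avoids the issues your bridging raises (variable bridge lengths, whether $\supp\mu$ coded by a Markov partition has bounded-gap transitivity, and the fact that Theorem \ref{main} does not assume transitivity of $f|\Lambda$), at the modest cost of a factor $\varepsilon n\cdot j$ in cardinality. For the cones, the paper works in the inverse limit $\widehat M$ (for a non-invertible map the full Oseledets splitting lives on $\widehat\Lambda$, not on $\Lambda$ --- a point your write-up elides when you write $E_j(x)$), equips regular backward orbits with Lyapunov norms, and extends the splitting to balls of radii $(re^{-2\varepsilon}e^{-\varepsilon(k+i)})^{1/\gamma}$ shrinking along the backward orbit; Lemmas \ref{estimate-nbhd} and \ref{cone-inv} and Corollaries \ref{3-re} and \ref{add-re-c} then give cone invariance and two-sided rates at every step.

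Two steps of your sketch need this machinery rather than a careful balance of parameters. First, your claim that the at most $\tau'N$ off-$A$ steps ``contribute only a factor $e^{\pm\tau'N\log\|Df^{\pm1}\|_\infty}$'' controls norms but not cone membership: at an off-$A$ step the derivative need not map a thin cone about the extended bundles into itself at all, so stepwise cone invariance can fail; one must instead verify invariance for the whole block map $D_xf^N$, using exact invariance of the Oseledets splitting at the regular endpoints, which is in effect what the paper's return-based construction does. Second, and more seriously, the points of $\cQ_\varepsilon$ are not the reference points $x\in G_N$ but points $z$ shadowing them, and $D_zf^N$ differs from $D_xf^N$ by per-step H\"older errors that accumulate multiplicatively; making these errors summable against the expansion is exactly where the exponent $1/\gamma$ in the neighborhood radii and the condition \eqref{eq:number-r} on $r$ enter. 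You correctly identify this as the main obstacle, but the proposal does not resolve it; its resolution is the technical core of the paper's Section 5.
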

Consider a $C^{1+\gamma}$ map $h:M\to M$ that is sufficiently $C^1$ close to $f$. Then $h$ is expanding, has a repeller $\Lambda_h$, and the maps $f|\Lambda_f$ and
$h|\Lambda_h$ are topologically conjugate.

We shall show that there is a compact invariant subset $\mathcal{Q}_{\varepsilon}(h)$ of
$\Lambda_h$ of large topological entropy such that the Lyapunov exponents of $\mu$ and of any $h$-invariant ergodic measure $\nu$ with support in $\mathcal{Q}_{\varepsilon}(h)$ are close.

\begin{theorem}\label{cont-LE}Let $(f,\mu)$ be the same as in Theorem \ref{main}. Given a sufficiently small $\varepsilon>0$, there is
$\delta>0$ such that for any $C^{1+\gamma}$ map $h:M\to M$ that is $\delta$-$C^1$ close to $f$ there exists a compact invariant subset
$\mathcal{Q}_{\varepsilon}(h)\subset\Lambda_h$ with the following properties:
\begin{enumerate}
\item $h_{\text{top}}(h|_{\mathcal{Q}_{\varepsilon}(h)})\ge h_{\mu}(f)-\varepsilon$;
\item there is a $\{\lambda_j(\mu)\}$-dominated splitting
$T_xM=E_1\oplus E_2\oplus\cdots\oplus E_\ell$ over
$\mathcal{Q}_\varepsilon(h)$ where $\lambda_1(\mu)<\dots<\lambda_\ell(\mu)$ are distinct Lyapunov exponents of $f$ with respect to the measure $\mu$.
\item for any $h$-invariant ergodic measure $\nu$ with support in
$\mathcal{Q}_{\varepsilon}(h)$ and for each $1\le j\le m_0$, there is $k=k(j)$ such that
$|\lambda_k(\mu)-\lambda_j(\nu)|\le\varepsilon$.
\end{enumerate}
\end{theorem}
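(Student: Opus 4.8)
The plan is to derive Theorem \ref{cont-LE} from Theorem \ref{main} by a compactness-and-perturbation argument, transporting the compact set $\mathcal{Q}_\varepsilon\subset\Lambda_f$ produced by Theorem \ref{main} through the conjugacy $\pi_h:\Lambda_f\to\Lambda_h$. First I would apply Theorem \ref{main} with a parameter $\varepsilon'\ll\varepsilon$ (to be chosen depending on the domination constants) to obtain an $f$-invariant compact set $\mathcal{Q}_{\varepsilon'}\subset\Lambda_f$ with $h_{\topp}(f|_{\mathcal{Q}_{\varepsilon'}})\ge h_\mu(f)-\varepsilon'$ carrying a $\{\lambda_j(\mu)\}$-dominated splitting $T_xM=E_1\oplus\cdots\oplus E_\ell$ with constant $C$ and gap parameter $\varepsilon'$. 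The key observation is that a dominated splitting is a \emph{robust} structure: the cone-field criterion for domination (invariance of suitable cone families and uniform contraction/expansion of cone widths under $Df^N$ for some fixed iterate $N$) is an open condition in the $C^1$ topology. Hence I would fix $N=N(C,\varepsilon')$ large enough that the domination of the splitting over $\mathcal{Q}_{\varepsilon'}$ is witnessed by honest (not just asymptotic) cone contraction after $N$ steps, and then choose $\delta>0$ so small that for any $C^{1}$-$\delta$-perturbation $h$ the same cone families, transported to a neighborhood of $\mathcal{Q}_h:=\pi_h(\mathcal{Q}_{\varepsilon'})$, remain invariant under $Dh^N$ with comparable contraction rates. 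This gives an $h$-invariant dominated splitting over $\mathcal{Q}_h$ with exponents $\lambda_j'$ satisfying $|\lambda_j'-\lambda_j(\mu)|<\varepsilon$, establishing property (2) with $\mathcal{Q}_\varepsilon(h):=\mathcal{Q}_h$.

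For property (1), I would use that topological entropy is \emph{lower} semi-continuous under this kind of transport: since $h|\Lambda_h$ is conjugate to $f|\Lambda_f$ via $\pi_h$, and $\pi_h$ is a homeomorphism close to the identity, $h_{\topp}(h|_{\mathcal{Q}_h})=h_{\topp}(f|_{\mathcal{Q}_{\varepsilon'}})\ge h_\mu(f)-\varepsilon'\ge h_\mu(f)-\varepsilon$; conjugacy preserves topological entropy exactly, so in fact no semi-continuity is even needed here — this is the cleanest of the three points. For property (3), let $\nu$ be any $h$-invariant ergodic measure supported in $\mathcal{Q}_\varepsilon(h)$. Because the splitting $T_xM=E_1\oplus\cdots\oplus E_\ell$ over $\mathcal{Q}_\varepsilon(h)$ is $\{\lambda_j'\}$-dominated, the bounds \eqref{main-lemma} hold with exponents $\lambda_j'$ and some uniform constant; integrating $\tfrac1n\log\|D_xh^n(u)\|$ for a unit vector $u\in E_j(x)$ against $\nu$ and applying the subadditive (Kingman) ergodic theorem shows that every Lyapunov exponent of $\nu$ lies within the gap parameter of some $\lambda_j'$, hence within $\varepsilon$ of the corresponding $\lambda_k(\mu)$ after enlarging $\varepsilon$ slightly; this yields the index function $k=k(j)$.

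The main obstacle I anticipate is making the cone-field robustness argument precise while simultaneously controlling \emph{all} $\ell$ subbundles at once: for an $\ell$-step dominated splitting one must propagate a nested family of cones (or equivalently work with the induced splittings on exterior powers $\Lambda^d TM$, where each consecutive pair $E_1\oplus\cdots\oplus E_d \preceq E_{d+1}\oplus\cdots\oplus E_\ell$ becomes a two-bundle dominated splitting), and verify that the perturbation size $\delta$ and iterate $N$ can be chosen uniformly over $d=1,\dots,\ell-1$. A secondary subtlety is that the perturbed splitting over $\mathcal{Q}_h$ a priori need not coincide with $\pi_h$-pushforward of the original subbundles — it is only $C^0$-close to it — so one must check that the Lyapunov exponents one reads off from the perturbed cones (which is what controls $\nu$) are still within $\varepsilon$ of $\lambda_j(\mu)$; this follows from the quantitative cone-contraction estimates but requires care in bookkeeping the constants. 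Everything else — invariance of $\mathcal{Q}_h$ under $h$, compactness, the entropy bound — is immediate from the conjugacy and from Theorem \ref{main}.
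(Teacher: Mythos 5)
Your proposal is correct and follows essentially the same route as the paper: the paper builds $\mathcal{Q}_\varepsilon(h)$ by re-running the Cantor construction of Theorem \ref{main} with the inverse branches of $h$ (which, as the paper itself notes in Section \ref{s6.6}, yields exactly $\pi(\mathcal{Q}_\varepsilon(f))$), obtains the entropy bound from the conjugacy of $h^m|_{R_\varepsilon(h)}$ with the same full shift on $\ell$ symbols, and derives the dominated splitting and the exponent estimates for measures $\nu$ from the persistence of the cone-invariance and expansion estimates \eqref{h-cone1}--\eqref{h-cone2} under the $C^1$-small perturbation --- which is precisely your ``openness of the cone-field criterion'' made concrete. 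The only cosmetic difference is that you invoke the global structural-stability conjugacy up front and transport the set, whereas the paper re-verifies the nested-intersection construction directly for $h$ and only identifies the result with $\pi(\mathcal{Q}_\varepsilon(f))$ later.
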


We begin with the proof of Theorem \ref{main} and we split it into few steps.

\subsection{Preliminaries on Lyapunov exponents} Let
$\widehat{f}:(\widehat{M},\widehat{\mu})\to(\widehat{M},\widehat{\mu}) $ denote the inverse limit of $f:(M,\mu)\to (M,\mu)$ where
$$
\widehat{M}:=\{\widehat{x}:=\{x_n\}_{n\in\mathbb{Z}}: x_n\in M \text{ and }x_n=f(x_{n+1}) \text{ for all } n\in \mathbb{Z}\}.
$$
We also let $\pi_0:\widehat{M}\to M$ be the projection defined by
$\pi_0(\{x_n\}_{n\in\mathbb{Z}})=x_0$ for any $\{x_n\}_{n\in\mathbb{Z}}\in\widehat{M}$. For each $\widehat{x}=\{x_n\}\in\widehat{M}$ we have that
$\widehat{f}(\widehat{x})=\{f(x_n)\}_{n\in\mathbb{Z}}$, and the inverse of $\widehat{f}$ is given by $\widehat{f}^{-1}(\{x_n\})=\{y_n\}$, where $y_n=x_{n-1}$. We refer the reader to \cite{pu} for more details on inverse limits of non-invertible maps.

Let $\widehat{\Lambda}=\{\widehat{x}=\{x_n\}\in\widehat{M}: x_n\in\Lambda \text{ for all } n\in\mathbb{Z}\}$. It is easy to see that
$\widehat{f}(\widehat{\Lambda})=\widehat{\Lambda}$, and there exists an $\widehat{f}$-invariant ergodic measure $\widehat{\mu}$ supported on
$\widehat{\Lambda}$ such that $(\pi_0)_*\widehat{\mu}=\mu$. It is standard to check that with respect to the cocycle $\{D_{x_0}f^n\}_{n\ge 1}$, for $\widehat{\mu}$-almost every
$\widehat{x}=\{x_n\}\in\widehat{\Lambda}$, the Lyapunov exponents are the same as those
for the system $f:(M,\mu)\to (M,\mu)$. The following result expresses Oseledets' Multiplicative Ergodic Theorem for the system
$\widehat{f}: (\widehat{M},\widehat{\mu})\to (\widehat{M},\widehat{\mu})$.

\begin{proposition}\label{re-p} There exists a full $\widehat{\mu}$-measure Borel set
$\tilde{\Delta}\subset\widehat{\Lambda}$ such that for every
$\widehat{x}=\{x_n\}\in\tilde{\Delta}$ there is an invariant splitting of the tangent space $T_{x_0}M$
\begin{equation}\label{inv-split1}
T_{x_0}M=E_1(\widehat{x})\oplus E_2(\widehat{x})\oplus\cdots\oplus E_{p(\widehat{x})}(\widehat{x})
\end{equation}
and numbers $0<\lambda_1(\widehat{x})<\lambda_2(\widehat{x})<\cdots<  \lambda_{p(\widehat{x})}(\widehat{x})<+ \infty$ and
$m_i(\widehat{x})\,(i=1,2,\dots, p(\widehat{x}))$ such that
$\dim E_i(\widehat{x})=m_i(\widehat{x})$ with
$\sum_{i=1}^{p(\widehat{x})}m_i(\widehat{x})=m_0$ and
$$
\lim\limits_{n\to\pm\infty}\frac1n\log\|T_0^n(\widehat{x})v\|=\lambda_i(\widehat{x})
$$
for each $i$ and for all $0\ne v\in E_i(\widehat{x})$, where
\begin{eqnarray*}
T_0^n(\widehat{x})=\left\{
\begin{array}{cc}
D_{x_0}f^n,& {\rm if}  \ n > 0,\\
Id, & {\rm if} \  n=0, \\
(D_{x_n}f^{-n})^{-1},  &{\rm if} \  n<0.
\end{array}
\right.
\end{eqnarray*}
Moreover, for any $i\neq j\in\{1,2,\cdots, p(\widehat{x})\}$ the angle between $E_i(\widehat{f}(\widehat{x}))$ and $E_j(\widehat{f}(\widehat{x}))$ satisfies that
\[
\lim_{n\to\pm\infty}\frac1n\log \angle(E_i(\widehat{f}^n(\widehat{x})),E_j(\widehat{f}^n(\widehat{x})) )=0.
\]
\end{proposition}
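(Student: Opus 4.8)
The plan is to obtain Proposition \ref{re-p} as a direct application of the two-sided Oseledets Multiplicative Ergodic Theorem to the invertible system $\widehat{f}\colon(\widehat{\Lambda},\widehat{\mu})\to(\widehat{\Lambda},\widehat{\mu})$ together with the derivative cocycle $\widehat{x}\mapsto D_{x_0}f$, the only genuine inputs being that $\|D_xf\|$ is bounded (because $f\in C^1$ and $M$ is compact) and that $D_xf$ is uniformly invertible on $\Lambda$ (because $f$ is expanding). First I would observe that, $f$ being expanding, $\widehat{f}$ is a homeomorphism of the compact metrizable inverse limit that preserves $\widehat{\Lambda}$ and the ergodic measure $\widehat{\mu}$, with $(\pi_0)_*\widehat{\mu}=\mu$. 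Working over the measurable bundle $\widehat{x}\mapsto T_{\pi_0(\widehat{x})}M=T_{x_0}M$, I would consider the linear cocycle over $\widehat{f}$ whose time-$n$ matrix at $\widehat{x}$ is exactly $T_0^n(\widehat{x})$: for $n>0$ the identity $T_0^{m+n}(\widehat{x})=T_0^n(\widehat{f}^m\widehat{x})\circ T_0^m(\widehat{x})$ is just the chain rule, and since each $D_xf$, $x\in\Lambda$, is invertible and $\widehat{f}$ is invertible, this cocycle extends to all $m,n\in\mathbb{Z}$. Choosing a measurable frame along a $\widehat{\mu}$-generic orbit one may regard $T_0^n(\widehat{x})$ as an element of $GL(m_0,\mathbb{R})$.

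Next I would verify the integrability hypotheses of the Oseledets theorem. Since $f\in C^1$ and $M$ is compact, $\Gamma:=\sup_{x\in M}\|D_xf\|<\infty$, while the expanding condition gives $\|(D_xf)^{-1}\|\le\kappa^{-1}$ for $x\in\Lambda$; hence $\log^{+}\|T_0^{\pm1}(\widehat{x})\|$ is bounded, in particular $\widehat{\mu}$-integrable. The two-sided Multiplicative Ergodic Theorem (see, e.g., \cite{pu}) then furnishes a full $\widehat{\mu}$-measure $\widehat{f}$-invariant Borel set $\tilde{\Delta}\subset\widehat{\Lambda}$ carrying a measurable $\widehat{f}$-invariant splitting $T_{x_0}M=E_1(\widehat{x})\oplus\cdots\oplus E_{p(\widehat{x})}(\widehat{x})$, real numbers $\lambda_1(\widehat{x})<\cdots<\lambda_{p(\widehat{x})}(\widehat{x})$ and multiplicities $m_i(\widehat{x})=\dim E_i(\widehat{x})$ with $\sum_i m_i(\widehat{x})=m_0$, such that $\lim_{n\to\pm\infty}\frac1n\log\|T_0^n(\widehat{x})v\|=\lambda_i(\widehat{x})$ for all $0\ne v\in E_i(\widehat{x})$; the subexponential decay of the angles $\angle(E_i(\widehat{f}^n\widehat{x}),E_j(\widehat{f}^n\widehat{x}))$ as $n\to\pm\infty$ is part of the two-sided conclusion. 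Because $\widehat{\mu}$ is ergodic, the functions $p$, $\lambda_i$, $m_i$ are $\widehat{\mu}$-a.e.\ constant and, as already observed before the proposition, they coincide with the Lyapunov exponents of $f$ with respect to $\mu$.

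Finally I would record positivity and finiteness of the exponents. For $\widehat{x}\in\tilde{\Delta}$, $n>0$ and $0\ne v\in E_i(\widehat{x})$, applying the expanding bound and the bound $\Gamma$ along the orbit $x_0,x_{-1},\dots,x_{-(n-1)}\in\Lambda$ gives $\kappa^n\|v\|\le\|T_0^n(\widehat{x})v\|\le\Gamma^n\|v\|$, hence $0<\log\kappa\le\lambda_i(\widehat{x})\le\log\Gamma<+\infty$ for every $i$. This yields the asserted chain $0<\lambda_1(\widehat{x})<\cdots<\lambda_{p(\widehat{x})}(\widehat{x})<+\infty$ and completes the proof.

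Since the statement is in essence a restatement of the Multiplicative Ergodic Theorem, there is no substantial obstacle; the two points that require attention are (i) that $T_0^n(\widehat{x})$ for negative $n$ makes sense at all, which is exactly where the invertibility of $D_xf$ on $\Lambda$ — equivalently the expanding hypothesis — is used, and (ii) that one must invoke the \emph{two-sided} version of the theorem in order to produce an honest splitting (rather than merely a filtration) and the two-sided angle estimate.
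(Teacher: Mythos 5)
Your proposal is correct and follows exactly the route the paper intends: the paper states this proposition without proof as "expressing Oseledets' Multiplicative Ergodic Theorem for the system $\widehat{f}:(\widehat{M},\widehat{\mu})\to(\widehat{M},\widehat{\mu})$", i.e.\ as a direct application of the two-sided MET to the derivative cocycle over the inverse limit, with invertibility of the cocycle coming from the expanding hypothesis and positivity of the exponents from $\|D_xf v\|\ge\kappa\|v\|$. Your write-up supplies precisely the standard verifications (integrability, invertibility of $D_xf$ on $\Lambda$, the two-sided angle estimate) that the paper leaves implicit.
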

The splitting \eqref{inv-split1} is called the \emph{Oseledets' splitting}.
Since $\widehat{\mu}$ is ergodic, the numbers $p(\widehat{x})$, $\lambda_i(\widehat{x})$ and $m_i(\widehat{x})$ for all $i$ are in fact constants for $\widehat{\mu}$-almost every point $\widehat{x}$. Hence, in the rest of Section 5 we will denote them simply by $p$,
$\lambda_i$ and $m_i$ respectively.

For each $\widehat{x}\in\tilde{\Delta}$ there exists a sequence of norms
$\{\|\cdot\|_{\widehat{x},n}\}_{n=-\infty}^{+\infty}$ such that the following statements hold (see \cite{bp13, bp07} for details): for any sufficiently small $\varepsilon>0$,
\begin{enumerate}
\item for each $1\le i\le p$, all $n,l\in\mathbb{Z}$ and all
$v\in E_i(\widehat{f}^n(\widehat{x}))$
$$
e^{l\lambda_i-|l|\varepsilon}\|v\|_{\widehat{x},n}
\le\|T_n^{l}(\widehat{x})v \|_{\widehat{x}, n+\ell}\le e^{l\lambda_i+|\ell|\varepsilon}
\|v\|_{\widehat{x},n},
$$
where $T_n^{l}(\widehat{x}):=T_0^{l}(\widehat{f}^n(\widehat{x}))$ is as in Proposition
\ref{re-p};
\item for all $v\in T_{x_n}M$
$$
\frac{1}{\sqrt{m_0}}\|v\|\le\|v\|_{\widehat{x},n}\le A(\widehat{x},\varepsilon)
e^{\varepsilon\frac{|n|}{2}}\|v\|,
$$
where $A(\widehat{x},\varepsilon)$ satisfies
$$
A(\widehat{f}^n(\widehat{x}),\varepsilon)\le e^{\varepsilon\frac{|n|}{2}}
A(\widehat{x},\varepsilon)
$$
for all $n\in\mathbb{Z}$ and $\widehat{x}\in\tilde{\Delta}$.
\end{enumerate}
In particular, for all $v\in E_i(\widehat{x})$ we have
$$
\begin{aligned}
e^{\lambda_i-\varepsilon}\|v\|_{\widehat{x},0}&\le\|D_{x_0}f(v)\|_{\widehat{x},1}
&\le e^{\lambda_i+\varepsilon}\|v\|_{\widehat{x},0};\\
e^{-\lambda_i-\varepsilon }\|v\|_{\widehat{x},0}&\le\|D_{x_0}f^{-1} (v)\|_{\widehat{x},-1} &\le e^{-\lambda_i+\varepsilon}\|v\|_{\widehat{x},0}.
\end{aligned}
$$
Given $k\ge 1$, consider the regular set
$$
\Lambda_k=\{\widehat{x}\in\tilde{\Delta} :\, A(\widehat{x},\varepsilon)\le e^{\varepsilon k}\}.
$$
The standard results of non-uniform hyperbolicity theory (see \cite{bp13}) show that regular sets are nested
$$
\Lambda_1\subset\Lambda_2\subset\cdots\subset\Lambda_k\subset\Lambda_{k+1} \subset\cdots
$$
and for every $k\ge 1$ one has $\widehat{f}^{\pm 1}( \Lambda_k)\subset\Lambda_{k+1}$ and $\widehat{\mu}(\Lambda_k)\to 1$ as $k\to\infty$. Hence, for every $\delta>0$, there exists $K\in\mathbb{N}$ such that $\widehat{\mu}(\Lambda_k)>1-\delta$ for every $k\ge K$. We would like to point out that $f$ is uniformly hyperbolic on each regular set $\Lambda_k$, and hence, the angle between different $E_i(\widehat{x})$ is uniformly bounded away from zero.

For $\varepsilon>0$, let $R(\varepsilon)$ denote the ball in $\mathbb{R}^{m_0}$ centered at the origin of radius $\varepsilon$. We now recall some properties of Lyapunov charts
$\{\varphi_{\widehat{x}}: \widehat{x}\in \widehat{\Lambda}\}$ for $\widehat{f}$, see \cite{bp13} for the proofs.

\begin{proposition}
There exists an $\widehat{f}$-invariant Borel set
$\widehat{\Lambda}_0\subset \widehat{\Lambda}$ of full $\widehat{\mu}$-measure such that for any $\varepsilon>0$ there exist: 1) a Borel function
$\ell:\widehat{\Lambda}_0\to [1,+\infty)$ satisfying
$\ell(\widehat{f}^{\pm}(\widehat{x}))\le e^{\varepsilon}\ell(\widehat{x})$ for all
$\widehat{x}\in\widehat{\Lambda}_0$; and 2) a collection of embeddings
$\varphi_{\widehat{x}} : R(\ell(\widehat{x})^{-1})\to M$ such that
\begin{enumerate}
\item[(1)] $\varphi_{\widehat{x}}(0)=x_0$ and $\mathbb{R}^{m_i}(\widehat{x}):=(D_0\varphi_{\widehat{x}})^{-1}(E_i(\widehat{x}))\,(i=1,\dots,r)$ are mutually
orthogonal in $\mathbb{R}^{m_0}$;
\item[(2)] For each $n\in\mathbb{Z}$, let
$H_{\widehat{x}}^n=\varphi^{-1}_{\widehat{f}^n(\widehat{x})}\circ\widehat{f}^n\circ \varphi_{\widehat{x}}$ be the connecting map between the chart at $\widehat{x}$ and the chart at $\widehat{f}^n(\widehat{x})$. Then for each $n\in\mathbb{Z}$, $1\le i\le  p$, and
$v\in\mathbb{R}^{m_i}(\widehat{x})$ we have
$$
e^{n \lambda_i-|n|\varepsilon}|v|\le |D_0 H^n_{\widehat{x}}(v)|\le e^{n\lambda_i+|n|\varepsilon}|v|,
$$
where $|\cdot|$ is the usual Euclidean norm on $\mathbb{R}^{m_0}$;
\item[(3)] let $Lip(g)$ denote the Lipschitz constant of the function $g$, then
$$
Lip(H_{\widehat{x}}-D_0H_{\widehat{x}})\le\varepsilon,\,\, Lip(H^{-1}_{\widehat{x}} - D_0H^{-1}_{\widehat{x}})\le\varepsilon
$$
and
$$
Lip(D_0H_{\widehat{x}})\le\ell(\widehat{x}),\,\,Lip(D_0H^{-1}_{\widehat{x}})\le\ell(\widehat{x});
$$
\item[(4)] For every $v, v'\in R(l(\widehat{x})^{-1})$, one has
$$
K^{-1}_0 d(\varphi_{\widehat{x}}(v),\varphi_{\widehat{x}}(v'))\le |v-v'|\le\ell(\widehat{x}) d(\varphi_{\widehat{x}}(v),\varphi_{\widehat{x}}(v'))
$$
for some universal constant $K_0>0$.
\end{enumerate}
\end{proposition}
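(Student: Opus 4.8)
The plan is to build the charts directly from the tempered Lyapunov norms $\{\|\cdot\|_{\widehat{x},n}\}$ produced above, following the classical construction of Lyapunov charts from \cite{bp13}. First I would, for each $\widehat{x}\in\tilde{\Delta}$, choose a linear isomorphism $L_{\widehat{x}}\colon\mathbb{R}^{m_0}\to T_{x_0}M$ that carries the coordinate subspace $\mathbb{R}^{m_i}$ onto $E_i(\widehat{x})$ and pulls the inner product associated with the norm $\|\cdot\|_{\widehat{x},0}$ back to the standard Euclidean inner product; since the Oseledets splitting is $\|\cdot\|_{\widehat{x},0}$-orthogonal, the subspaces $\mathbb{R}^{m_i}(\widehat{x})=(D_0\varphi_{\widehat{x}})^{-1}(E_i(\widehat{x}))$ are then automatically mutually orthogonal, which gives item (1). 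I would then set $\varphi_{\widehat{x}}:=\exp_{x_0}\circ\,L_{\widehat{x}}$, restricted to the ball $R(\ell(\widehat{x})^{-1})$, where $\exp$ is the Riemannian exponential map of $M$ (a near-isometry on balls of some uniform radius $\rho_0>0$ depending only on $M$), and $\ell(\widehat{x})$ is a size function to be fixed.

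To identify $\ell$, note that the connecting map $H^n_{\widehat{x}}=\varphi^{-1}_{\widehat{f}^n(\widehat{x})}\circ\widehat{f}^n\circ\varphi_{\widehat{x}}$ has derivative at the origin $D_0H^n_{\widehat{x}}=L^{-1}_{\widehat{f}^n(\widehat{x})}\circ D_{x_0}f^n\circ L_{\widehat{x}}$, and the defining exponential estimate for the tempered norms $\|\cdot\|_{\widehat{x},n}$ immediately yields the two-sided bound in (2); this part is purely linear and needs no smallness. For (3)--(4), the map $\varphi_{\widehat{x}}$ distorts the Riemannian distance by a factor governed by $\|L_{\widehat{x}}\|\,\|L^{-1}_{\widehat{x}}\|$, which is in turn controlled by the tempering constant $A(\widehat{x},\varepsilon)$ comparing $\|\cdot\|_{\widehat{x},0}$ with the Riemannian norm; this gives the metric comparison in (4) and the Lipschitz bounds $Lip(D_0H_{\widehat{x}}),\,Lip(D_0H^{-1}_{\widehat{x}})\le\ell(\widehat{x})$, provided $\ell(\widehat{x})$ is taken to be a fixed power of $A(\widehat{x},\varepsilon)$ times a constant. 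The nonlinear remainder $H_{\widehat{x}}-D_0H_{\widehat{x}}$ is $\widehat{f}$ read in charts minus its linearization; since $f\in C^{1+\gamma}$, on a ball of radius $r$ about $x_0$ the map $f$ differs from its first-order Taylor polynomial with Lipschitz constant $O(r^{\gamma})$, and after conjugating by the maps $L_{\widehat{x}},L_{\widehat{f}(\widehat{x})}$ (which inflate this by at most a polynomial factor in $A(\widehat{x},\varepsilon)$, using the subexponential variation of $A$ along orbits) one obtains a Lipschitz constant $\le\varepsilon$ as soon as the chart radius $\ell(\widehat{x})^{-1}$ is small compared with a suitable negative power of $A(\widehat{x},\varepsilon)$. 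Thus it suffices to take $\ell(\widehat{x})$ to be a large enough fixed power of $A(\widehat{x},\varepsilon)$, after which (1)--(4) all hold, up to replacing $\varepsilon$ by a fixed multiple of itself throughout, which only affects the uniform constants.

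It remains to secure the slow-variation property $\ell(\widehat{f}^{\pm}(\widehat{x}))\le e^{\varepsilon}\ell(\widehat{x})$. This fails for the raw majorant $A(\widehat{x},\varepsilon)$, which only satisfies $A(\widehat{f}^n(\widehat{x}),\varepsilon)\le e^{\varepsilon|n|/2}A(\widehat{x},\varepsilon)$, but the standard tempering (Lusin--Egorov) argument shows that any positive measurable function with subexponential growth along $\widehat{\mu}$-a.e.\ orbit is dominated by a measurable function varying by at most a factor $e^{\varepsilon}$ per iterate; replacing $A$ by such a tempered majorant, defined on an $\widehat{f}$-invariant full-measure subset $\widehat{\Lambda}_0\subset\tilde{\Delta}$, produces the desired $\ell$. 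I expect the main obstacle to be precisely the tension between the two smallness requirements: forcing the nonlinear part of $H_{\widehat{x}}$ to be $\varepsilon$-Lipschitz makes the charts shrink at a rate dictated by powers of $A(\widehat{x},\varepsilon)$, while the conclusion demands a tempered size function, so the delicate point is the bookkeeping of exactly which powers of $A$ enter each estimate, together with the use of the $C^{1+\gamma}$ hypothesis to control the nonlinear remainder; the remaining steps are linear algebra and elementary properties of $\exp$.
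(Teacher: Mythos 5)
Your construction is correct and is precisely the classical Lyapunov chart construction that the paper itself does not reprove but simply cites from \cite{bp13}: charts $\exp_{x_0}\circ L_{\widehat{x}}$ with $L_{\widehat{x}}$ orthonormalizing the Oseledets splitting for the tempered norm $\|\cdot\|_{\widehat{x},0}$, the $C^{1+\gamma}$ hypothesis controlling the nonlinear remainder on a ball whose radius is a negative power of $A(\widehat{x},\varepsilon)$, and the standard tempering lemma producing the slowly varying size function $\ell$. No discrepancy with the intended argument.
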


\subsection{Extending Oseledets' splitting} Given $\widehat{x} \in \Lambda_k$, we can extend the invariant splitting \eqref{inv-split1} and the Lyapunov metric in the tangent space to a neighborhood
$$
B(\pi_0\widehat{f}^{-i}(\widehat{x}), (re^{-2\varepsilon}e^{-\varepsilon (k+i)})^{\frac{1}{\gamma}})
$$
of $\pi_0\widehat{f}^{-i}(\widehat{x})$ for $ i\ge 0$, where the number $r$ is chosen to satisfy
\begin{equation}\label{eq:number-r}
0<r\le\min\Bigl\{\frac{1}{Km_0}, \frac{e^{\lambda_1}(e^\varepsilon-1)}{K\sqrt{m_0}}\Bigr\}
\end{equation}
and $K$ is the H\"older constant of the differentiable operator $Df$. To see this for every $z\in B(\pi_0\widehat{f}^{-i}(\widehat{x}), (re^{-2\varepsilon}e^{-\varepsilon (k+i)})^{\frac{1}{\gamma}})$ and $v\in T_{z}M$ we translate the vector $v$ to a corresponding vector
$\bar{v}\in T_{\pi_0{\widehat{f}^{-i} (\widehat{x})}}M$ along the geodesic connecting $z$ with $\pi_0\widehat{f}^{-i}(\widehat{x})$ and we set $\|v\|_z=\|\bar{v}\|_{\widehat{x},-i}$. We thus obtain a splitting of the tangent space at $z$
$$
T_{z}M=E_1(z)\oplus\cdots\oplus E_{p}(z),
$$
by translating the splitting
$$
T_{\pi_0{\widehat{f}^{-i}(\widehat{x})}}M=E_1(\pi_0{\widehat{f}^{-i} (\widehat{x})})\oplus \cdots\oplus E_p(\pi_0{\widehat{f}^{-i} (\widehat{x})})
$$
at the point $\pi_0\widehat{f}^{-i}(\widehat{x})$ along the geodesic connecting $z$ with this point.

Let $f^{-1}_{x}$ denote the corresponding inverse branch of the map
$f|_{B(x,\delta)}$ where $\delta$ is chosen such that the map $f|_{B(x,\delta)}$ is invertible for each $x\in \Lambda$. Then we have the following results.

\begin{lemma} \label{estimate-nbhd} Let $\widehat{x}\in\Lambda_k$. Assume that for some $i\ge1$ and some $r>0$ satisfying \eqref{eq:number-r}, the following conditions hold:
\begin{enumerate}
\item $y\in B(\pi_0\widehat{f}^{-i}(\widehat{x}),
(re^{-2\varepsilon}e^{-\varepsilon (k+i)})^{\frac{1}{\gamma}})$;
\item $f(y)\in B(\pi_0 \widehat{f}^{-i+1}(\widehat{x}),
(re^{-2\varepsilon}e^{-\varepsilon (k+i-1)})^{\frac{1}{\gamma}})$.
\end{enumerate}
Then for every $v\in E_j(y)\,(j=1,\dots,p)$ we have
$$
\begin{aligned}
e^{\lambda_j-2\varepsilon}\|v\|_{y}&\le\|D_yf(v)\|_{f(y)}&\le e^{\lambda_j+2\varepsilon}\|v\|_{y},\\
e^{-\lambda_j-2\varepsilon}\|v\|_{y}&\le\|D_yf_{\pi_0 \widehat{f}^{-i-1}(\widehat{x})}^{-1}(v)\|_{y^*}&\le e^{-\lambda_j+2\varepsilon}\|v\|_{y},
\end{aligned}
$$
where $y^*=f_{\pi_0 \widehat{f}^{-i-1}(\widehat{x})}^{-1}(y)$.
\end{lemma}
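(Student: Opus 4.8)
\noindent\emph{Proof idea.}
Write $x_{-i}:=\pi_0\widehat f^{-i}(\widehat x)$ and similarly $x_{-i+1}$, $x_{-i-1}$. The plan is to transfer the sharp Lyapunov--norm estimates available at the base points $x_{-i},x_{-i+1},x_{-i-1}$ to the nearby points $y,f(y),y^*$, paying an extra $\varepsilon$ in the exponent for the $\gamma$-H\"older continuity of $Df$; the point is that condition \eqref{eq:number-r} and the precise form of the radii $(re^{-2\varepsilon}e^{-\varepsilon(k+i)})^{1/\gamma}$ are arranged exactly so that this price is at most $\varepsilon$, so the $e^{\pm\varepsilon}$ estimate at the base point upgrades to the claimed $e^{\pm2\varepsilon}$ estimate at $y$.

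I would first record the base-point estimate. The first property of the Lyapunov norms $\{\|\cdot\|_{\widehat x,n}\}$, applied at $\widehat f^{-i}(\widehat x)$ with $n=-i$, $l=1$, gives for every $\bar v\in E_j(x_{-i})$
\[
e^{\lambda_j-\varepsilon}\|\bar v\|_{\widehat x,-i}\le\|D_{x_{-i}}f(\bar v)\|_{\widehat x,-i+1}\le e^{\lambda_j+\varepsilon}\|\bar v\|_{\widehat x,-i}.
\]
Now fix $v\in E_j(y)$ and let $\bar v$ be its parallel transport to $x_{-i}$ along the connecting geodesic; by the construction of the extended splitting and metric, $\bar v\in E_j(x_{-i})$ and $\|\bar v\|_{\widehat x,-i}=\|v\|_y$. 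Let $P_1$ and $P_2$ denote parallel transport from $y$ to $x_{-i}$ and from $f(y)$ to $x_{-i+1}$ respectively, and set $w_1:=D_{x_{-i}}f(\bar v)$ and $w_2:=P_2\bigl(D_yf(v)\bigr)$; by condition (2), $f(y)$ lies in the neighbourhood on which the extended norm is defined, so $\|w_2\|_{\widehat x,-i+1}=\|D_yf(v)\|_{f(y)}$. Since $w_2-w_1=\bigl(P_2\circ D_yf\circ P_1^{-1}-D_{x_{-i}}f\bigr)(\bar v)$, the $\gamma$-H\"older continuity of $Df$ with H\"older constant $K$ yields $\|w_2-w_1\|\le K\,d(y,x_{-i})^\gamma\|\bar v\|$. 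Feeding in condition (1), i.e.\ $d(y,x_{-i})^\gamma<re^{-2\varepsilon}e^{-\varepsilon(k+i)}$, then the second property of the Lyapunov norms together with $\widehat x\in\Lambda_k$ (so that $A(\widehat x,\varepsilon)\le e^{\varepsilon k}$) to convert between $\|\cdot\|$ and $\|\cdot\|_{\widehat x,\cdot}$, and finally the bound $r\le e^{\lambda_1}(e^\varepsilon-1)/(K\sqrt{m_0})$ from \eqref{eq:number-r} together with $\lambda_1\le\lambda_j$, I expect to reach
\[
\|w_2-w_1\|_{\widehat x,-i+1}\le e^{\lambda_j}\bigl(e^{-\varepsilon}-e^{-2\varepsilon}\bigr)\|v\|_y .
\]

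Combining the last display with the base-point estimate and $\|\bar v\|_{\widehat x,-i}=\|v\|_y$ via the triangle inequality, the upper bound $\|D_yf(v)\|_{f(y)}\le e^{\lambda_j+2\varepsilon}\|v\|_y$ follows from the elementary inequality $e^{\varepsilon}+e^{-\varepsilon}-e^{-2\varepsilon}\le e^{2\varepsilon}$ (equivalent to $\cosh2\varepsilon\ge\cosh\varepsilon$), and the lower bound $\|D_yf(v)\|_{f(y)}\ge e^{\lambda_j-2\varepsilon}\|v\|_y$ from the exact cancellation $e^{-\varepsilon}-(e^{-\varepsilon}-e^{-2\varepsilon})=e^{-2\varepsilon}$. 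This is the first pair of inequalities.

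For the second pair I would not repeat the computation but apply the first pair at $y^*=f^{-1}_{x_{-i-1}}(y)$ with $i+1$ in place of $i$. Because $f$ is expanding and $f(x_{-i-1})=x_{-i}$, the inverse branch contracts distances by $\kappa^{-1}$, so $d(y^*,x_{-i-1})\le\kappa^{-1}d(y,x_{-i})<(re^{-2\varepsilon}e^{-\varepsilon(k+i+1)})^{1/\gamma}$ provided $\varepsilon\le\gamma\log\kappa$, which holds for $\varepsilon$ small; since moreover $f(y^*)=y$, the pair $(y^*,i+1)$ satisfies conditions (1)--(2) of the lemma (condition (2) for $(y^*,i+1)$ being precisely condition (1) for $(y,i)$). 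As $D_{y^*}f$ carries $E_j(y^*)$ onto $E_j(y)$, we may write $v=D_{y^*}f(v')$ with $v'=D_yf^{-1}_{x_{-i-1}}(v)\in E_j(y^*)$, and then the first pair of inequalities at $(y^*,i+1)$, together with $\|D_{y^*}f(v')\|_{f(y^*)}=\|v\|_y$, reads $e^{-\lambda_j-2\varepsilon}\|v\|_y\le\|v'\|_{y^*}\le e^{-\lambda_j+2\varepsilon}\|v\|_y$, which is the assertion. The main obstacle is nothing conceptual but the constant bookkeeping in the displayed bound on $\|w_2-w_1\|_{\widehat x,-i+1}$: a cruder accounting would ruin the \emph{lower} bound, and it is exactly the factor $e^{-2\varepsilon}$ built into the radii, matched against \eqref{eq:number-r}, that keeps that inequality tight.
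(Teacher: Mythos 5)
Your proof of the first pair of inequalities is essentially the paper's own argument: compare $D_yf$ with $D_{\pi_0\widehat f^{-i}(\widehat x)}f$, bound the difference by $K\,d(y,\pi_0\widehat f^{-i}(\widehat x))^\gamma$ via H\"older continuity of $Df$, convert between $\|\cdot\|$ and the Lyapunov norms using $A(\widehat x,\varepsilon)\le e^{\varepsilon k}$ and the lower bound $\|\cdot\|_{\widehat x,n}\ge\|\cdot\|/\sqrt{m_0}$, and observe that the radius $(re^{-2\varepsilon}e^{-\varepsilon(k+i)})^{1/\gamma}$ makes the accumulated factors cancel so that the error is at most $Kre^{-2\varepsilon}\sqrt{m_0}\,\|v\|_y\le e^{\lambda_1}(e^{-\varepsilon}-e^{-2\varepsilon})\|v\|_y$; the elementary inequalities you quote then give the $e^{\pm2\varepsilon}$ bounds. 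Your preliminary observation that $y^*\in B(\pi_0\widehat f^{-i-1}(\widehat x),(re^{-2\varepsilon}e^{-\varepsilon(k+i+1)})^{1/\gamma})$ because the inverse branch contracts by better than $e^{-\varepsilon/\gamma}$ is also exactly the paper's first step. So the core of the argument is correct and identical in substance.

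The one place you genuinely deviate is the second pair of inequalities, and there is a small gap there. You deduce it by applying the first pair at $(y^*,i+1)$ to $v'=D_yf^{-1}_{\pi_0\widehat f^{-i-1}(\widehat x)}(v)$, which requires $v'\in E_j(y^*)$, and you justify this by asserting that $D_{y^*}f$ carries $E_j(y^*)$ onto $E_j(y)$. But for points off the orbit the subspaces $E_j(\cdot)$ are \emph{defined by parallel translation} from the base points $\pi_0\widehat f^{-n}(\widehat x)$, not by dynamical invariance; only the genuine Oseledets subspaces at the orbit points are $Df$-invariant. Hence $v'$ lies only within an angle of order $K\,d(y,\pi_0\widehat f^{-i}(\widehat x))^\gamma$ of $E_j(y^*)$, not in it, and the bootstrap is not literally available. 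The paper avoids this by simply rerunning the same H\"older-perturbation computation for the inverse branch (comparing $D_yf^{-1}_{\pi_0\widehat f^{-i-1}(\widehat x)}$ with $D_{\pi_0\widehat f^{-i}(\widehat x)}f^{-1}_{\pi_0\widehat f^{-i-1}(\widehat x)}$, which does map $E_j(\pi_0\widehat f^{-i}(\widehat x))$ to $E_j(\pi_0\widehat f^{-i-1}(\widehat x))$). Your argument is repaired the same way, or by absorbing the angular discrepancy into a further $\varepsilon$; as written, though, the reduction of the second pair to the first is not quite justified.
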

\begin{proof}[Proof of the lemma] Choose small $\varepsilon>0$ such that
$(\kappa-\varepsilon)^{-1}<e^{-\varepsilon/\gamma}$ and
\[
d(f(x),f(y))>(\kappa-\varepsilon)d(x,y)
\]
for every $x,y\in\Lambda$ with $d(x,y)<r^{1/\gamma}$. Thus, if $y\in B(\pi_0\widehat{f}^{-i}(\widehat{x}),
(re^{-2\varepsilon}e^{-\varepsilon (k+i)})^{\frac{1}{\gamma}})$ then
\begin{eqnarray*}
\begin{aligned}
d(f_{\pi_0 \widehat{f}^{-i-1}(\widehat{x})}^{-1}(y),\pi_0 \widehat{f}^{-i-1}(\widehat{x}))
&=d(f_{\pi_0 \widehat{f}^{-i-1}(\widehat{x})}^{-1}(y),f_{\pi_0 \widehat{f}^{-i-1}(\widehat{x})}^{-1}(\pi_0 \widehat{f}^{-i}(\widehat{x})) )\\
&\le e^{-\varepsilon/\gamma}d(y, \pi_0\widehat{f}^{-i}(\widehat{x}) )\\
&\le (re^{-2\varepsilon}e^{-\varepsilon (k+i+1)})^{\frac{1}{\gamma}},
\end{aligned}
\end{eqnarray*}
i.e., $f_{\pi_0\widehat{f}^{-i-1}(\widehat{x})}^{-1}(y)\in B(\pi_0\widehat{f}^{-i-1}(\widehat{x}),
(r e^{-2\varepsilon}e^{-\varepsilon (k+i+1)})^{\frac{1}{\gamma}})$.
For every $v\in E_j(y)$,
\begin{equation}\label{cone}
\begin{aligned}
\|D_{y}f(v)\|_{f(y)}&=\|D_{y} f(v)\|_{\widehat{x} , -i+1} \\
&\le\|D_{\pi_0 \widehat{f}^{-i}(\widehat{x})}f(v)\|_{\widehat{x},-i+1}+\|D_{y} f(v)-D_{\pi_0 \widehat{f}^{-i}(\widehat{x})}f(v)\|_{\widehat{x},-i+1} \\
&\le\|D_{\pi_0 \widehat{f}^{-i}(\widehat{x})}f(v)\|_{\widehat{x},-i+1}+e^{\varepsilon k}e^{\varepsilon i} \|D_yf(v)-D_{\pi_0\widehat{f}^{-i}(\widehat{x})}f(v)\| \\
&\le e^{\lambda_j+\varepsilon}\|v\|_{\widehat{x},-i}+e^{\varepsilon k}e^{\varepsilon i}K [d(\pi_0 \widehat{f}^{-i}(\widehat{x}), y)]^{\gamma}\|v\| \\
&\le  e^{\lambda_j+\varepsilon}\|v\|_{y}+e^{\varepsilon k}e^{\varepsilon i}K re^{-2\varepsilon}e^{-\varepsilon (k+i)}\sqrt{m_0}\|v\|_{y} \\
&=(e^{\lambda_j+\varepsilon}+K re^{-2\varepsilon}\sqrt{m_0})\|v\|_{y}\\
&\le e^{\lambda_j +2\varepsilon}\|v\|_y
\end{aligned}
\end{equation}
where we use \eqref{eq:number-r} in the last inequality.
Similarly, one can show that $\|D_yf(v)\|_{f(y)}\ge e^{\lambda_j-2 \varepsilon }\|v\|_y$. The second estimate can be proven in a similar fashion.
\end{proof}

Let $\widehat{x}\in\Lambda_k$. Assume that Conditions (1), (2) in Lemma \ref{estimate-nbhd} hold for some $i\ge 1$. We define two cones as follows:
$$
V_j(*, \xi)=\Big\{(v_1, v_2): v_1\in E_1\oplus\cdots\oplus E_j, \,v_2\in E_{j+1}\oplus\cdots \oplus E_p,\,\|v_2\|_{*} < \xi \|v_1\|_{*}\Big\}
$$
and
$$
U_j(*,\xi)=\Big\{(w_1, w_2): w_1\in E_1\oplus\cdots\oplus E_j, \,w_2\in E_{j+1}\oplus\cdots \oplus E_p,\,\|w_1\|_{*} < \xi \|w_2\|_{*}\Big\},
$$
where the point $*=\{f_{\pi_0 \widehat{f}^{-i-1}(\widehat{x})}^{-1}(y), y, f(y)\}$ and $\xi>0$ is a small number. These cones have the following properties.
\begin{lemma}\label{cone-inv}Let $\widehat{x}\in\Lambda_k$ and assume that Conditions (1) and (2) in Lemma \ref{estimate-nbhd} hold for some $i\ge1$ and a number $r>0$ satisfying \eqref{eq:number-r} and the following condition
\begin{equation}\label{eq-n-r1}
r<\frac{e^{\lambda_{j+1}-\varepsilon}-e^{\lambda_{j}+\varepsilon}}{4K}.
\end{equation}
Then the cones defined above are invariant in the following sense: there exist $\xi>0$ and $0<\lambda<1$ (both depending on $r$) such that with
$y^*=f_{\pi_0 \widehat{f}^{-i-1}(\widehat{x})}^{-1}(y)$,
$$
D_y f_{\pi_0 \widehat{f}^{-i-1}(\widehat{x})}^{-1} V_j(y,\xi) \subset V_j( y^*, \lambda \xi) \text{ and }D_yf U_j(y,\xi) \subset U_j( f(y), \lambda \xi).
$$
Moreover, for every $v\in U_j(y,\xi)$,
$$
\| D_yf (v)\|_{f(y)} \ge e^{(\lambda_{j+1} - 3\varepsilon)} \|v\|_y
$$
and for every $v \in V_{j}(y, \xi)$,
$$
\| D_y f_{\pi_0 \widehat{f}^{-i-1}(\widehat{x})}^{-1} (v)\|_{y^*} \ge e^{-(\lambda_j + 3\varepsilon)} \|v\|_y.
$$
\end{lemma}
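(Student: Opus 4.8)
The plan is to establish cone invariance by a standard hyperbolic-geometry argument, exploiting the fact that on the regular set $\Lambda_k$ the Oseledets splitting is dominated with uniform constants, and that Lemma \ref{estimate-nbhd} already gives the key quantitative control: in the Lyapunov metric $\|\cdot\|_z$, the derivative $D_yf$ expands each $E_j(y)$ by a factor between $e^{\lambda_j-2\varepsilon}$ and $e^{\lambda_j+2\varepsilon}$, and its inverse branch contracts correspondingly. The condition \eqref{eq-n-r1} is exactly what is needed to guarantee that the gap $e^{\lambda_{j+1}-\varepsilon}-e^{\lambda_j+\varepsilon}$ between the expansion rates of the ``fast'' block $E_{j+1}\oplus\cdots\oplus E_p$ and the ``slow'' block $E_1\oplus\cdots\oplus E_j$ survives after one is perturbed by the H\"older error term $Kr$ coming from translating the splitting along geodesics (compare the estimate \eqref{cone} in the proof of Lemma \ref{estimate-nbhd}).

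I would proceed as follows. First, fix $\widehat{x}\in\Lambda_k$ and $i\ge 1$ so that Conditions (1), (2) of Lemma \ref{estimate-nbhd} hold, and write the splittings at the three relevant points $y^* = f^{-1}_{\pi_0\widehat{f}^{-i-1}(\widehat{x})}(y)$, $y$, $f(y)$, all obtained by geodesic translation from the Oseledets splitting at the corresponding orbit points of $\widehat{x}$. Decompose a vector $v\in U_j(y,\xi)$ as $v = w_1 + w_2$ with $w_1\in E_1\oplus\cdots\oplus E_j$, $w_2\in E_{j+1}\oplus\cdots\oplus E_p$, and $\|w_1\|_y < \xi\|w_2\|_y$. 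Using Lemma \ref{estimate-nbhd} blockwise together with the near-orthogonality of the blocks in the Lyapunov metric (which holds on $\Lambda_k$ with constants controlled by $k$, and which we absorb into the $2\varepsilon$ slack), estimate
$$
\|D_yf(w_2)\|_{f(y)} \ge e^{\lambda_{j+1}-2\varepsilon}\|w_2\|_y, \qquad \|D_yf(w_1)\|_{f(y)} \le e^{\lambda_j+2\varepsilon}\|w_1\|_y < \xi\, e^{\lambda_j+2\varepsilon}\|w_2\|_y.
$$
Hence the $E_1\oplus\cdots\oplus E_j$-component of $D_yf(v)$ has norm at most $\xi e^{\lambda_j+2\varepsilon}\|w_2\|_y$ while its $E_{j+1}\oplus\cdots\oplus E_p$-component has norm at least $e^{\lambda_{j+1}-2\varepsilon}\|w_2\|_y$, so $D_yf(v)\in U_j(f(y),\lambda\xi)$ with $\lambda := e^{(\lambda_j+2\varepsilon)-(\lambda_{j+1}-2\varepsilon)}$. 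Choosing $\varepsilon$ small relative to $\min_i(\lambda_{i+1}-\lambda_i)$ makes $\lambda<1$. The lower bound $\|D_yf(v)\|_{f(y)}\ge e^{\lambda_{j+1}-3\varepsilon}\|v\|_y$ then follows because, for $v\in U_j(y,\xi)$ with $\xi$ small, $\|v\|_y$ is comparable to $\|w_2\|_y$ up to a factor $1+\xi$, which is absorbed in passing from $2\varepsilon$ to $3\varepsilon$. The argument for $V_j$ under the inverse branch $D_yf^{-1}_{\pi_0\widehat{f}^{-i-1}(\widehat{x})}$ is symmetric, using the second line of Lemma \ref{estimate-nbhd} and the fact that $f^{-1}$ contracts $E_j$ by a factor between $e^{-\lambda_j-2\varepsilon}$ and $e^{-\lambda_j+2\varepsilon}$; here the relevant contraction ratio for the cone is $e^{-(\lambda_j+2\varepsilon)+(\lambda_{j+1}-2\varepsilon)}<1$.

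The main obstacle, and the place where condition \eqref{eq-n-r1} is genuinely used, is controlling the error introduced by the fact that $E_j(y)$, $E_j(f(y))$, $E_j(y^*)$ are not the true Oseledets subspaces but geodesic translates; consequently $D_yf$ does not preserve these blocks exactly, and the ``off-diagonal'' terms $\|D_yf|_{E_a(y)}\,\text{projected onto}\,E_b(f(y))\|$ for $a\ne b$ are nonzero. As in \eqref{cone}, these terms are bounded by $Kr^{?}$-type quantities times the Lyapunov-metric distortion $e^{\varepsilon(k+i)}$, which by Conditions (1), (2) is at most a constant multiple of $Kr$; requiring $r < \big(e^{\lambda_{j+1}-\varepsilon}-e^{\lambda_j+\varepsilon}\big)/(4K)$ ensures this off-diagonal leakage is small compared to the spectral gap and can be swallowed into the $\varepsilon$-slack when passing from the idealized estimates above to the genuine ones. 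I would organize the computation so that each of the four conclusions (invariance of $U_j$, invariance of $V_j$, the expansion bound on $U_j$, the expansion bound on $V_j$) is derived from a single blockwise inequality plus the cone condition, keeping the bookkeeping of the constants $\xi$ and $\lambda$ (both depending on $r$ through \eqref{eq-n-r1}) explicit but routine.
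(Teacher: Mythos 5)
Your overall strategy is the same as the paper's: decompose vectors into the slow block $E_1\oplus\cdots\oplus E_j$ and the fast block $E_{j+1}\oplus\cdots\oplus E_p$, control the deviation of $D_yf$ from the block-diagonal derivative at the reference point $\pi_0\widehat{f}^{-i}(\widehat{x})$ by the H\"older estimate as in \eqref{cone}, and use the spectral gap together with \eqref{eq-n-r1}. However, there is a genuine gap at the critical step. Your displayed computation treats $D_yf$ as if it were block-diagonal with respect to the translated splitting: you bound the slow component of $D_yf(v)$ by $\|D_yf(w_1)\|\le \xi e^{\lambda_j+2\varepsilon}\|w_2\|_y$ and conclude invariance with $\lambda=e^{(\lambda_j+2\varepsilon)-(\lambda_{j+1}-2\varepsilon)}$. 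In fact the slow component of $D_yf(v)$ is $D_{11}w_1+D_{12}w_2$, and the off-diagonal term satisfies only $\|D_{12}w_2\|\le Kr\|w_2\|_y$. Your closing remark that this leakage ``can be swallowed into the $\varepsilon$-slack'' is not correct: take $v=(0,w_2)$, which lies in $U_j(y,\xi)$ for every $\xi>0$; then the slow component of its image is exactly $D_{12}w_2$, of size up to $Kr\|w_2\|_y$, and cone invariance requires $Kr\|w_2\|_y<\lambda\xi\,\|v_2'\|_{f(y)}\approx \lambda\xi e^{\lambda_{j+1}-\varepsilon}\|w_2\|_y$. This forces $\xi\gtrsim Kr/(\lambda e^{\lambda_{j+1}-\varepsilon})$, i.e.\ the aperture $\xi$ must be bounded \emph{below} in terms of $r$; no smallness of $\varepsilon$ can repair this for a fixed tiny $\xi$. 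The paper resolves the resulting two-sided constraint ($\xi$ small enough that $D_{21}w_1$ does not spoil the gain on the fast component, yet large enough relative to $r$ that $D_{12}w_2$ stays inside the shrunken cone) by taking $\xi=Nr$ with $K/N<\tfrac12\bigl(e^{\lambda_{j+1}-\varepsilon}-e^{\lambda_j+\varepsilon}\bigr)$, which yields $\lambda=\bigl(e^{\lambda_j+\varepsilon}+Kr+K/N\bigr)/\bigl(e^{\lambda_{j+1}-\varepsilon}-Kr\bigr)$, and \eqref{eq-n-r1} is then precisely what makes $\lambda<1$. Your proposal acknowledges that $\xi$ depends on $r$ but never identifies this coupling, and without it the argument does not close.

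Two smaller points: your quoted contraction ratio for $V_j$ under the inverse branch, $e^{-(\lambda_j+2\varepsilon)+(\lambda_{j+1}-2\varepsilon)}$, is greater than $1$ (the correct ratio is its reciprocal, $e^{(\lambda_j+2\varepsilon)-(\lambda_{j+1}-2\varepsilon)}<1$, before the off-diagonal corrections); and applying Lemma \ref{estimate-nbhd} ``blockwise'' to sums of blocks needs the remark that the Lyapunov norm on a direct sum is taken as the maximum of the block norms, which is how the paper computes $\|D_yf(v)\|_{f(y)}=\max\{\|v_1'\|,\|v_2'\|\}=\|v_2'\|$ in the final expansion estimate.
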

\begin{proof}[Proof of the lemma] Fix a small number $\xi>0$ and let
\[  D_{\pi_0 \widehat{f}^{-i}(\widehat{x})}f= \left(
\begin{array}{cc}
C_{11}     &   0\\
0               &   C_{22}
\end{array}
\right) \text{ and }
D_yf= \left(
\begin{array}{cc}
D_{11}     &   D_{12}\\
D_{21}              &   D_{22}
\end{array}
\right).
\]
Using the same arguments as in the proof of Lemma \ref{estimate-nbhd}, we obtain that $f_{\pi_0\widehat{f}^{-i-1}(\widehat{x})}^{-1}(y)\in B(\pi_0\widehat{f}^{-i-1}(\widehat{x}),
(r e^{-2\varepsilon}e^{-\varepsilon (k+i+1)})^{\frac{1}{\gamma}})$. Then using an argument similar to the one in \eqref{cone}, we have that
\begin{eqnarray*}
\begin{aligned}
\|D_{y} f-D_{\pi_0 \widehat{f}^{-i}(\widehat{x})}f\|_{f(y)}&=\|D_{y} f-D_{\pi_0 \widehat{f}^{-i}(\widehat{x})}f\|_{{\widehat{x},-i+1}}\\
&\le e^{\varepsilon k}e^{\varepsilon i} \|D_yf-D_{\pi_0\widehat{f}^{-i}(\widehat{x})}f\|\\
&\le e^{\varepsilon k}e^{\varepsilon i}K [d(\pi_0 \widehat{f}^{-i}(\widehat{x}), y)]^{\gamma}\\
&\le e^{\varepsilon k}e^{\varepsilon i}K re^{-2\varepsilon}e^{-\varepsilon (k+i)}\\
&\le Kre^{-2\varepsilon}.
\end{aligned}
\end{eqnarray*}
This yields that
$$
\begin{aligned}
\|C_{11}-D_{11}\|_{f(y)}&\le  Kr \text{ and }& \|C_{22} - D_{22}\|_{f(y)}\le Kr, \\
\|D_{12}\|_{f(y)}&\le Kr \text{ and }& \|D_{21}\|_{f(y)} \le Kr.
\end{aligned}
$$
Therefore, for every $v =( v_1,v_2) \in U_j(y, \xi)$, setting
$D_yf(v)=(v_1^{\prime}, v_2^{\prime})$, we obtain that
$$
\begin{aligned}
\|v_1^{\prime}\|_{f(y)}=\|D_{11} (v_1) + D_{12} (v_2)\|_{f(y)}&\le (e^{\lambda_j + \varepsilon}+Kr)\|v_1\|_y + Kr \|v_2\|_y \\
&\le (\xi e^{\lambda_j+ \varepsilon}+(1+\xi) Kr) \|v_2\|_y
\end{aligned}
$$
and
$$
\begin{aligned}
\|v_2^{\prime}\|_{f(y)} = \|D_{21} (v_1) + D_{22} (v_2)\|_{f(y)}&\ge (e^{\lambda_{j+1}- \varepsilon}-Kr) \|v_2\|_y - Kr \|v_1\|_y\\
&\ge (e^{\lambda_{j+1}- \varepsilon} -(1+\xi) Kr) \|v_2\|_y.
\end{aligned}
$$
Thus,
\[
\|v_1^{\prime}\|_{f(y)}\le \frac{ e^{\lambda_j+ \varepsilon}+(1+\frac 1\xi) Kr}{ e^{\lambda_{j+1}- \varepsilon} -(1+\xi) Kr }\xi\|v_2^{\prime}\|_{f(y)}<\frac{ e^{\lambda_j+ \varepsilon}+ (1+\frac 1\xi)Kr}{ e^{\lambda_{j+1}- \varepsilon} - Kr }\xi\|v_2^{\prime}\|_{f(y)}.
\]
Denote $C:=e^{\lambda_{j+1}-\varepsilon}-e^{\lambda_{j}+\varepsilon}$ and choose  $N>0$ such that $\frac KN<\frac{C}{2}$. Let $\xi=Nr$ and
$$
\lambda:=\frac{e^{\lambda_j+\varepsilon}+(1+\frac{1}{\xi})Kr}{ e^{\lambda_{j+1}-\varepsilon} - Kr}=\frac{e^{\lambda_j+\varepsilon}+Kr+\frac{K}{N}}{e^{\lambda_{j+1}-\varepsilon} - Kr}.
$$
Since by \eqref{eq-n-r1}, $2Kr+\frac{K}{N}<C$, we have that $0<\lambda<1$. It follows that
$$
\|v_1^{\prime}\|_{f(y)} < \lambda \xi \|v_2^{\prime}\|_{f(y)}
$$
and hence, $D_yf U_j(y,\xi) \subset U_j( f(y), \lambda \xi)$. Moreover,
$$
\begin{aligned}
\|D_yf(v)\|_{f(y)} &=\|(v_1^{\prime}, v_2^{\prime})\|_{f(y)} := \max \{ \|v_1^{\prime}\|_{f(y)}, \|v_2^{\prime}\|_{f(y)}\}=\|v_2^{\prime}\|_{f(y)}\\
&\ge (e^{\lambda_{j+1} - \varepsilon} -(1+\xi) Kr) \|v_2\|_{y}
\ge e^{\lambda_{j+1} - 3\varepsilon}  \|v\|_y.
\end{aligned}
$$
This proves the first inequality and the second one can be proven in a similar fashion.
\end{proof}

Given  $x\in\Lambda$ and $m\in \mathbb{N}^+$, let $f_x^{-m}$ denote the corresponding inverse branch of the map $f|_{B(f^{m-1}(x),\delta)}\circ \cdots\circ f|_{B(x,\delta)}$, where
$\delta$ is chosen such that the map $f|_{B(x,\delta)}$ is invertible for each $x\in\Lambda$. The following results can be proved using similar arguments as in the proof of Lemmas \ref{estimate-nbhd} and \ref{cone-inv}.
\begin{corollary}\label{3-re}
Assume that $\widehat{f}^m(\widehat{x})\in\Lambda_k$ for some $m\in \mathbb{N}^+$ and  $\pi_0(\widehat{x})=x$. Then for a sufficiently small number $r$,
$y\in B(f^{m}(x),(re^{-2\varepsilon}e^{-\varepsilon k})^{\frac{1}{\gamma}})$ and $v\in E_j(y)$, $v'\in E_j(f^{-m}_{x} (y))$ we have that
$$
\begin{aligned}
e^{-m(\lambda_j + 2 \varepsilon) }\|v\|_y&\le\|D_yf_x^{-m}(v)\|_{y^*}&\le e^{-m(\lambda_j -2 \varepsilon)}\|v\|_{y}\\
e^{m(\lambda_{j}-2\varepsilon)}\|v'\|_{y^*}&\le\|D_{y^*}f^m(v')\|_{y}&\le e^{m(\lambda_{j} + 2\varepsilon)}\|v'\|_{y^*},
\end{aligned}
$$
where
$y^*=f_x^{-m}(y)\in B(x,(re^{-2\varepsilon}e^{-\varepsilon (k+m)})^{\frac{1}{\gamma}})$.
\end{corollary}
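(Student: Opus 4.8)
The plan is to unwind the composition $f_x^{-m}=f^{-1}_{x}\circ f^{-1}_{f(x)}\circ\cdots\circ f^{-1}_{f^{m-1}(x)}$ and to iterate the one-step estimates of Lemmas \ref{estimate-nbhd} and \ref{cone-inv} along the finite backward orbit joining $\widehat{f}^m(\widehat{x})$ to $\widehat{x}$. Set $\widehat{z}:=\widehat{f}^m(\widehat{x})\in\Lambda_k$, so that $\pi_0\widehat{f}^{-i}(\widehat{z})=f^{m-i}(x)$ for $0\le i\le m$, and, applying the extension procedure of the preceding subsection to $\widehat{z}$ in place of $\widehat{x}$, let $E_j(\cdot)$ and $\|\cdot\|_{\cdot}$ denote the invariant splitting and the Lyapunov metric extended to the neighbourhoods $B(f^{m-i}(x),(re^{-2\varepsilon}e^{-\varepsilon(k+i)})^{1/\gamma})$. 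Put $y^{(0)}:=y$ and $y^{(i)}:=f^{-1}_{f^{m-i}(x)}(y^{(i-1)})$ for $1\le i\le m$, so that $y^{(m)}=f_x^{-m}(y)=y^*$. First I would verify, by induction on $i$, that $y^{(i)}\in B(f^{m-i}(x),(re^{-2\varepsilon}e^{-\varepsilon(k+i)})^{1/\gamma})$ for every $0\le i\le m$: the inductive step is precisely the contraction computation from the proof of Lemma \ref{estimate-nbhd} (the one using $(\kappa-\varepsilon)^{-1}<e^{-\varepsilon/\gamma}$ and the choice \eqref{eq:number-r} of $r$), which needs only the containment at the previous level. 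For $i=m$ this gives the asserted location of $y^*$; it also shows that Condition~(2) of Lemma \ref{estimate-nbhd} holds at every level $i$ along the segment, since $f(y^{(i)})=y^{(i-1)}$.

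For the first pair of inequalities I would iterate the inverse-branch estimate of Lemma \ref{estimate-nbhd} together with the $V_j$-cone invariance of Lemma \ref{cone-inv}. Given $v\in E_j(y)$, write $D_yf_x^{-i}(v)=e_i+g_i$, where $e_i$ is its component in $E_j(y^{(i)})$ and $g_i$ the remainder. The key per-step facts, read off from the \eqref{cone}-type bookkeeping, are: (i) the on-orbit derivative preserves $E_j$ exactly, with Lyapunov-norm factor in $[e^{-\lambda_j-\varepsilon},e^{-\lambda_j+\varepsilon}]$; (ii) the actual derivative $D_{y^{(i)}}f^{-1}_{f^{m-i-1}(x)}$ differs from the geodesic transport of the on-orbit derivative by at most $Kre^{-2\varepsilon}$ in operator norm, a bound \emph{uniform in $i$ and $m$} because the growth $e^{\varepsilon(k+i)}$ of the Lyapunov-norm comparison constant on $\Lambda_k$ exactly cancels the geometric decay $e^{-\varepsilon(k+i)}$ of the neighbourhood radii; and (iii) the $V_j$-type cones are invariant under the inverse branch, $D_{y^{(i)}}f^{-1}_{f^{m-i-1}(x)}\,V_j(y^{(i)},\xi)\subset V_j(y^{(i+1)},\xi)$, so the iterate stays in a fixed thin cone around $E_j$. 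For $r$ small enough — relative to the spectral gaps $e^{\lambda_{l+1}}-e^{\lambda_l}$ as in \eqref{eq-n-r1}, and small enough that the drift fed into the $g_i$'s stays negligible against the $e_i$'s over the whole segment — chaining the $m$ one-step bounds gives $e^{-m(\lambda_j+2\varepsilon)}\|v\|_y\le\|D_yf_x^{-m}(v)\|_{y^*}\le e^{-m(\lambda_j-2\varepsilon)}\|v\|_y$, the extra $\varepsilon$ absorbing the Hölder and cone errors.

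The second pair, for $v'\in E_j(y^*)$ and $D_{y^*}f^m(v')$, follows by the symmetric argument: one iterates the forward-branch estimate of Lemma \ref{estimate-nbhd} — whose Condition~(2) at level $i$ is exactly the already-verified containment $f(y^{(i)})=y^{(i-1)}\in B(f^{m-i+1}(x),(re^{-2\varepsilon}e^{-\varepsilon(k+i-1)})^{1/\gamma})$ — together with the forward invariance $D_{y^{(i)}}f\,U_{j-1}(y^{(i)},\xi)\subset U_{j-1}(y^{(i-1)},\xi)$ of the $U$-type cones from Lemma \ref{cone-inv} (with the obvious conventions at $j=1$ and $j=p$), tracking the $E_j$-component of $D_{y^*}f^i(v')$ up the segment from $y^*=y^{(m)}$ to $y=y^{(0)}$. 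I expect the main obstacle to be exactly this bookkeeping: the Oseledets splitting is only \emph{approximately} invariant at points off the orbit, so one must control the drift of $D_yf_x^{\pm i}(v)$ away from $E_j$ — which is injected at rate $\approx Kr$ per step and can be amplified by the faster exponents — and show it remains dominated by the $E_j$-component throughout the orbit segment; this is what forces $r$ to be chosen small in terms of the spectral data and the length $m$, while everything else is a routine transcription of the proofs of Lemmas \ref{estimate-nbhd} and \ref{cone-inv}.
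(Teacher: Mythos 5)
Your plan is exactly what the paper does: the paper offers no separate proof of Corollary~\ref{3-re} beyond the remark that it ``can be proved using similar arguments as in the proof of Lemmas \ref{estimate-nbhd} and \ref{cone-inv}'', i.e., by chaining the one-step estimates along the backward orbit segment, and your induction on the location of $y^{(i)}$, the uniform-in-$i$ per-step error $Kre^{-2\varepsilon}$ (the $e^{\varepsilon(k+i)}$ growth of the Lyapunov-norm comparison cancelling the $e^{-\varepsilon(k+i)}$ decay of the neighbourhood radii), and the cone invariance of Lemma \ref{cone-inv} are precisely those arguments. Your closing caveat --- that the two-sided bounds for the geodesically translated $E_j$ force $r$ to be small in a way that depends on $m$ as well as on the spectral gaps, since the off-$E_j$ drift injected at rate $\approx Kr$ per step is amplified by the faster backward exponents --- is consistent with the corollary's quantifier order ($m$ is fixed before $r$ is chosen) and is, if anything, a more careful accounting than the paper's one-line justification.
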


\begin{corollary}\label{add-re-c}
Assume that $\widehat{f}^m(\widehat{x})\in\Lambda_k$ for some $m\in \mathbb{N}^+$ and  $\pi_0(\widehat{x})=x$. Then for a sufficiently small number $r$ and any
$y\in B(f^{m}(x),(r e^{-2\varepsilon} e^{-\varepsilon k})^{\frac{1}{\gamma}})$ there are some $\xi>0$ and $0< \lambda<1$ (both depending on $r$) such that
$$
\begin{aligned}
D_yf_x^{-m}V_j(y,\xi)&\subset V_j(y^*,\lambda^m\xi)&\subset\mathrm{Int}V_j(y^*,\xi ), \\
D_{y^*}f^m U_j(y^*, \xi)&\subset U_j(y,\lambda^m\xi)&\subset\mathrm{Int}U_j(y,\xi)
\end{aligned}
$$
and for each $v\in V_j(y, \xi)$, $w\in U_j(f_x^{-m}(y), \xi)$ we have
$$
\|D_yf_x^{-m}(v)\|_{y^*}\ge e^{ -m(\lambda_j + 3 \varepsilon)}\|v\|_y, \quad
\|D_{y^*} f^m (w)\|_y\ge e^{m(\lambda_{j+1} - 3\varepsilon) }\|w \|_{y^*},
$$
where
$y^*=f_x^{-m}(y)\in B(x,(re^{-2\varepsilon}e^{-\varepsilon (k+m)})^{\frac{1}{\gamma}})$ and $\mathrm{Int}(\cdot)$ denotes the interior of a set.
\end{corollary}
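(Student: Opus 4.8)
The plan is to deduce Corollary \ref{add-re-c} from the one-step estimates of Lemmas \ref{estimate-nbhd} and \ref{cone-inv} by iterating them $m$ times along the backward orbit segment joining $y$ to $y^{*}=f_{x}^{-m}(y)$, keeping the constants $\xi$ and $\lambda$ uniform over the steps. First I would set up the orbit decomposition. Put $\widehat{z}=\widehat{f}^{m}(\widehat{x})\in\Lambda_{k}$; by the nesting property $\widehat{f}^{\pm 1}(\Lambda_{\ell})\subset\Lambda_{\ell+1}$ of the regular sets one has $\widehat{f}^{-i}(\widehat{z})=\widehat{f}^{m-i}(\widehat{x})\in\Lambda_{k+i}$ and $\pi_{0}\widehat{f}^{-i}(\widehat{z})=f^{m-i}(x)$ for $0\le i\le m$. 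Write $f_{x}^{-m}=g_{m}\circ\cdots\circ g_{1}$, where $g_{i}$ is the inverse branch of $f$ carrying a neighborhood of $f^{m-i+1}(x)$ onto one of $f^{m-i}(x)$, and set $y_{0}=y$, $y_{i}=g_{i}(y_{i-1})$, so that $y^{*}=y_{m}$. Using the contraction estimate built into the proof of Lemma \ref{estimate-nbhd} — the choice of $\varepsilon$ with $(\kappa-\varepsilon)^{-1}<e^{-\varepsilon/\gamma}$, which makes each inverse branch shrink the pertinent neighborhood by the factor $e^{-\varepsilon/\gamma}$ — I would show by induction on $i$ that $y_{i}\in B\bigl(f^{m-i}(x),(re^{-2\varepsilon}e^{-\varepsilon(k+i)})^{1/\gamma}\bigr)$; in particular $y^{*}=y_{m}\in B\bigl(x,(re^{-2\varepsilon}e^{-\varepsilon(k+m)})^{1/\gamma}\bigr)$, which is the inclusion asserted in the statement. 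Hence, for any $r$ satisfying \eqref{eq:number-r} and \eqref{eq-n-r1} (both conditions involve $r$ alone), the hypotheses of Lemmas \ref{estimate-nbhd} and \ref{cone-inv} hold at every step $i=1,\dots,m$, with $\widehat{x}$ there replaced by $\widehat{z}$, the index being $i$, and the base point being $y_{i}$.

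Then I would apply Lemma \ref{cone-inv} at each step and compose. The one-step inclusions $D_{y_{i-1}}g_{i}\,V_{j}(y_{i-1},\xi)\subset V_{j}(y_{i},\lambda\xi)$ and, dually, $D_{y_{i}}f\,U_{j}(y_{i},\xi)\subset U_{j}(y_{i-1},\lambda\xi)$ chain together because $\lambda<1$ forces each image cone back into $V_{j}(y_{i},\xi)$ (resp. $U_{j}(y_{i-1},\xi)$), so the lemma can be reapplied; composing down the chain gives $D_{y}f_{x}^{-m}V_{j}(y,\xi)\subset V_{j}(y^{*},\lambda^{m}\xi)\subset\mathrm{Int}\,V_{j}(y^{*},\xi)$ and $D_{y^{*}}f^{m}U_{j}(y^{*},\xi)\subset U_{j}(y,\lambda^{m}\xi)\subset\mathrm{Int}\,U_{j}(y,\xi)$. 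For the norm bounds, any vector that enters the chain inside $V_{j}$ (resp. $U_{j}$) stays inside $V_{j}$ (resp. $U_{j}$) at every step by those inclusions, so the one-step lower bounds $\|D_{y_{i-1}}g_{i}(v)\|_{y_{i}}\ge e^{-(\lambda_{j}+3\varepsilon)}\|v\|_{y_{i-1}}$ and $\|D_{y_{i}}f(w)\|_{y_{i-1}}\ge e^{\lambda_{j+1}-3\varepsilon}\|w\|_{y_{i}}$ of Lemma \ref{cone-inv} apply at each of the $m$ steps and multiply, giving $\|D_{y}f_{x}^{-m}(v)\|_{y^{*}}\ge e^{-m(\lambda_{j}+3\varepsilon)}\|v\|_{y}$ for $v\in V_{j}(y,\xi)$ and $\|D_{y^{*}}f^{m}(w)\|_{y}\ge e^{m(\lambda_{j+1}-3\varepsilon)}\|w\|_{y^{*}}$ for $w\in U_{j}(y^{*},\xi)$. (Corollary \ref{3-re} is obtained by the same telescoping scheme, starting from $E_{j}(y)\subset V_{j}(y,\xi)\cap U_{j-1}(y,\xi)$ and iterating the two-sided one-step estimates of Lemma \ref{estimate-nbhd}.)

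The hard part is the uniformity underlying the previous paragraph: one must check that $\xi$ and $\lambda$ can be chosen independently of $i$ and of $m$. This rests on the fact that the one-step Hölder error in the derivative comparison is $\le e^{\varepsilon k}e^{\varepsilon i}K\,[d(\pi_{0}\widehat{f}^{-i}(\widehat{z}),y_{i})]^{\gamma}\le Kre^{-2\varepsilon}$, the distortion factor $e^{\varepsilon(k+i)}$ of the Lyapunov norm being cancelled precisely by the $1/\gamma$-power in the radius of the neighborhood, so the per-step error is uniformly of order $r$ and does not shrink with $i$. A byproduct is that the cone radius contracts not to $0$ but to a fixed point of size comparable to $r$; thus the contracted radius after $m$ steps is really $\lambda^{m}\xi+O(r)$ rather than exactly $\lambda^{m}\xi$. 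Since only strict invariance — containment in the interior of $V_{j}(y^{*},\xi)$, resp. $U_{j}(y,\xi)$ — is needed in the sequel, this is harmless once $r$ is taken small relative to $\xi$, but it is the one bookkeeping point that requires care.
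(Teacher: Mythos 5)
Your proposal is correct and is exactly the argument the paper intends: the paper gives no explicit proof of Corollary \ref{add-re-c}, stating only that it follows by the "similar arguments" of Lemmas \ref{estimate-nbhd} and \ref{cone-inv}, and your iteration of the one-step estimates along the backward orbit (using the nesting $\widehat{f}^{-i}(\Lambda_k)\subset\Lambda_{k+i}$ and the cancellation of the $e^{\varepsilon(k+i)}$ distortion against the shrinking radius) is precisely that. Your bookkeeping caveat about the composed aperture being $\lambda^{m}\xi+O(r)$ rather than literally $\lambda^{m}\xi$ is a legitimate observation about the statement as written, and you correctly note that only the containment in $\mathrm{Int}\,V_j(y^*,\xi)$, resp. $\mathrm{Int}\,U_j(y,\xi)$, is used in the sequel.
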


\begin{remark}\label{back-norm} In Corollary \ref{add-re-c}, since
$\widehat{f}^m(\widehat{x})\in\Lambda_k$, we have that  for every
$w\in U_j(f_x^{-m}(y), \xi)$,
\[
\begin{aligned}
e^{\varepsilon k} \|D_{y^*} f^m (w)\|&\ge\|D_{y^*} f^m (w)\|_{\widehat{f}^m(\widehat{x}),0}=\|D_{y^*} f^m (w)\|_y\\
&\ge e^{m(\lambda_{j+1} - 3\varepsilon) }\|w \|_{y^*}\ge e^{m(\lambda_{j+1} - 3\varepsilon) }\|w \|\frac{1}{\sqrt{m_0}}.
\end{aligned}
\]
This yields that $\|D_{y^*} f^m (w)\|\ge e^{m(\lambda_{j+1} - 4\varepsilon) }\|w \|$ for all sufficiently large $m$. Hence, in  Corollary \ref{add-re-c}, for each $v\in V_j(y, \xi)$, $w\in U_j(f_x^{-m}(y), \xi)$ one has for all sufficiently large $m$ that
$$
\|D_yf_x^{-m} (v)\|\ge e^{-m(\lambda_j + 4\varepsilon)}  \|v\|,\quad
\|D_{y^*} f^m (w)\|\ge e^{m(\lambda_{j+1} - 4\varepsilon) } \|w \|.
$$
Similarly, in Corollary \ref{3-re}, for every $v\in E_j(y)$, $v'\in E_j(f^{-m}_{x} (y))$  one can further have  that for all sufficiently large $m$
\begin{eqnarray*}
&&e^{-m(\lambda_j + 3 \varepsilon) }\|v\|\le\|D_yf_x^{-m}(v)\|\le e^{-m(\lambda_j -3 \varepsilon)}\|v\|,\\
&&e^{m(\lambda_{j}-3\varepsilon)}\|v'\|\le\|D_{y^*}f^m(v')\|\le e^{m(\lambda_{j} + 3\varepsilon)}\|v'\|.
\end{eqnarray*}
\end{remark}

\subsection{Constructing a compact invariant set with dominated splitting}
The aim of this section is to construct a compact invariant subset of $\Lambda$ on which the topological entropy of $f$ is close to $h_{\mu}(f)$ and the tangent space over this subset admits a dominated splitting with rates given by Lyapunov exponents of $\mu$. To achieve this we will produce a sufficiently large number of points that have distinct orbits of certain length.

By the Brin-Katok entropy formula (see \cite{kat}), if $\mu$ is an $f$-invariant ergodic measure,  for each $\delta\in (0,1)$ we have
\begin{eqnarray*}
h_{\mu}(f)=\lim\limits_{\tilde{\varepsilon}\to 0}\liminf\limits_{n\to\infty}\frac1n\log N(\mu, n , \tilde{\varepsilon},\delta)=\lim\limits_{\tilde{\varepsilon} \to 0} \limsup\limits_{n \to \infty} \frac 1n \log N(\mu,n,\tilde{\varepsilon},\delta),
\end{eqnarray*}
where $N(\mu,n,\tilde{\varepsilon},\delta)$ denotes the minimal number of Bowen's balls
$\{B_n(x,\tilde{\varepsilon})\}$ that are needed to cover a set of measure at least $1-\delta$. Fix a number $\delta\in (0,1)$ and a small $\varepsilon>0$. Then there exists
$\bar{\varepsilon}>0$ such that for every $\tilde{\varepsilon}\le\bar{\varepsilon}$ one can find a number $N_1$ satisfying: given a set $A$ of measure $>1-2\delta$ and a number $n\ge N_1$, any $(n,\tilde{\varepsilon})$-separated set $E\subset A$ of maximal cardinality satisfies
\begin{equation}\label{C1}
\text{\text{Card}}\,E\ge\exp(n(h_{\mu}(f)-\varepsilon)).
\end{equation}
Let us start by choosing some $\varepsilon\in (0, \lambda_1/3)$. There is a regular set
$\Lambda_{K_1}\subset\widehat{\Lambda}_f$ such that
$\widehat{\mu}(\Lambda_{K_1})> 1-\delta$ and for every
$\widehat{x}=\{x_n\}\in\Lambda_{K_1}$,
$y\in B(\pi_0(\widehat{x}), (r e^{-2 \varepsilon} e^{-\varepsilon K_1})^{\frac{1}{\gamma}})$, and $i \ge 1$ we have
\begin{equation}\label{estim}
\|D_yf^{-i}_{\pi_0 (\widehat{x})}\|_{y^*}  \le e^{-i(\lambda_1-3\varepsilon)},
\end{equation}
where $y^*=f^{-i}_{\pi_0 (\widehat{x})}(y)$. Set
\begin{equation}\label{def-rho}
\rho= (r e^{-2\varepsilon} e^{-\varepsilon K_1})^{\frac{1}{\gamma}}.
\end{equation}
Consider a cover of $\pi_0(\Lambda_{K_1})$ by balls
$B(x_1,\rho/4),\dots, B(x_j,\rho/4)$ with centers $x_i\in\pi_0(\Lambda_{K_1}), i=1,\dots, j$.

Let $\mathcal P=\{P_1,\dots, P_\ell\}$, $\ell\ge j$, be a finite measurable partition of $ M$ such that $\mathcal{P}(x_i)\subset B(x_i, \rho/4)$ for $i=1,2,\dots, j$. Here $\mathcal P(x)$ denote the element of the partition $\mathcal P$  that contains $x$. Such a partition does exist, for example, we can take the first $j$ elements of $\mathcal P$ as follows:
\[
P_1=B(x_1,  \rho/4),\dots, P_k=B(x_k,  \rho/4)\setminus \bigcup_{i=1}^{k-1}P_i\text{ for } 2\le k\le j.
\]
We have the following result.
\begin{lemma}
Given numbers $\varepsilon > 0$ and $\delta > 0$ and a positive measure set $\Lambda_{K_1}$ as above, there exist a positive integer $N_2=N_2(\varepsilon, \delta)$ and a compact subset $\Lambda_2^{\prime}\subset\Lambda_{K_1} $ (possibly depending on $\varepsilon$ and $\delta$) such that $\widehat{\mu}(\Lambda_2^{\prime})>1-2\delta$ and for every $\widehat{x} \in \Lambda_2^{\prime}$ and $n \ge N_2$ we have
\begin{eqnarray*}  \pi_0 \widehat{f}^k(\widehat{x}) \in \mathcal P(\pi_0 (\widehat{x}))   \mbox{ and }  \widehat{f}^k(\widehat{x}) \in \Lambda_{K_1}   \mbox{ for some number } k \in [n, n+\varepsilon n).
\end{eqnarray*}
\end{lemma}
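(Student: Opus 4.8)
The plan is to decompose the regular set $\Lambda_{K_1}$ along the partition $\mathcal P$ and then to control, \emph{uniformly in the point}, the frequency with which a $\widehat\mu$-typical orbit returns to the partition piece it started in. For $1\le i\le\ell$ put $A_i:=\Lambda_{K_1}\cap\pi_0^{-1}(P_i)\subset\widehat\Lambda$. These are pairwise disjoint Borel sets with $\bigsqcup_{i=1}^\ell A_i=\Lambda_{K_1}$ (the $P_i$ partition $M$), so $\sum_i\widehat\mu(A_i)=\widehat\mu(\Lambda_{K_1})>1-\delta$. Let $J=\{i:\widehat\mu(A_i)>0\}$ and $a:=\min_{i\in J}\widehat\mu(A_i)>0$; since $\widehat\mu(A_i)=0$ for $i\notin J$, we have $\widehat\mu\bigl(\bigcup_{i\in J}A_i\bigr)=\widehat\mu(\Lambda_{K_1})>1-\delta$. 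The key observation is purely set-theoretic: if $\widehat x\in A_i$ then $\mathcal P(\pi_0(\widehat x))=P_i$, so for any $k$ the two requirements ``$\widehat f^{k}(\widehat x)\in\Lambda_{K_1}$'' and ``$\pi_0\widehat f^{k}(\widehat x)\in\mathcal P(\pi_0(\widehat x))$'' together say exactly that $\widehat f^{k}(\widehat x)\in A_i$. Hence it suffices to produce a compact set $\Lambda_2'\subset\Lambda_{K_1}$, contained in $\bigcup_{i\in J}A_i$, with $\widehat\mu(\Lambda_2')>1-2\delta$, and an integer $N_2=N_2(\varepsilon,\delta)$ such that every $\widehat x\in\Lambda_2'$ (lying, say, in $A_{i_0}$) satisfies $\widehat f^{k}(\widehat x)\in A_{i_0}$ for some $k\in[n,n+\varepsilon n)$ whenever $n\ge N_2$.

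To build $\Lambda_2'$: because $\widehat\mu$ is ergodic, Birkhoff's theorem gives a full-measure Borel set on which $\frac1N\sum_{k=0}^{N-1}\mathbf 1_{A_i}(\widehat f^{k}(\widehat x))\to\widehat\mu(A_i)$ for each of the finitely many $i\in J$. By Egorov's theorem this convergence is uniform on a Borel set $E$ with $\widehat\mu(E)>1-\delta/2$. Intersecting $E$ with $\bigcup_{i\in J}A_i\;(\subset\Lambda_{K_1})$ and passing to a compact subset via inner regularity of $\widehat\mu$ on the compact metric space $\widehat\Lambda$, I obtain a compact $\Lambda_2'\subset\Lambda_{K_1}$ with $\widehat\mu(\Lambda_2')>1-2\delta$, contained in $\bigcup_{i\in J}A_i$, on which the above averages converge uniformly in $\widehat x$ over all $i\in J$. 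Consequently there is $N_0=N_0(\varepsilon,\delta)$ such that for all $N\ge N_0$, all $\widehat x\in\Lambda_2'$ and all $i\in J$,
\[
\Bigl|\tfrac1N\sum_{k=0}^{N-1}\mathbf 1_{A_i}(\widehat f^{k}(\widehat x))-\widehat\mu(A_i)\Bigr|<\tfrac{\varepsilon a}{8}.
\]

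It remains to turn this positive-frequency statement into a hit in every window. We may assume $\varepsilon<1$. Fix $\widehat x\in\Lambda_2'$, say $\widehat x\in A_{i_0}$ with $i_0\in J$, set $N_2:=\max\{N_0,\lceil 2/\varepsilon\rceil\}$, fix $n\ge N_2$, and put $m:=\lfloor\varepsilon n\rfloor\ (\ge1)$. Subtracting the Birkhoff sum for $A_{i_0}$ over $[0,n)$ from the one over $[0,n+m)$ and applying the displayed estimate (valid since $n+m\ge n\ge N_0$), the number of $k\in[n,n+m)$ with $\widehat f^{k}(\widehat x)\in A_{i_0}$ is at least
$m\,\widehat\mu(A_{i_0})-(2n+m)\tfrac{\varepsilon a}{8}\ge a\bigl(\varepsilon n-1\bigr)-\tfrac{3\varepsilon a n}{8}=a\bigl(\tfrac58\varepsilon n-1\bigr)$,
using $\widehat\mu(A_{i_0})\ge a$ and $2n+m\le 3n$. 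For $n\ge 2/\varepsilon$ this lower bound is $\ge a/4>0$, and being an integer it is $\ge1$. So some such $k$ exists; since $A_{i_0}=\Lambda_{K_1}\cap\pi_0^{-1}(P_{i_0})$ and $P_{i_0}=\mathcal P(\pi_0(\widehat x))$, this $k\in[n,n+m)\subset[n,n+\varepsilon n)$ has exactly the asserted properties.

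The only genuinely delicate point is that $N_2$ must be independent of the individual point $\widehat x$, which is precisely what Egorov's theorem secures (a bare application of Birkhoff would only give a point-dependent threshold). One should also notice that the numbers $\widehat\mu(A_i)$ for $i\in J$, though possibly very small, are strictly positive and are \emph{fixed} once $\varepsilon$ and $\delta$ — hence $K_1$, $\rho$ and the partition $\mathcal P$ — have been chosen; thus it is legitimate for $N_2$ to depend on $a=\min_{i\in J}\widehat\mu(A_i)$ (through $N_0$). Everything else is routine: the reduction to returns to $A_{i_0}$ is set-theoretic, and the counting estimate only exploits that the window length $\varepsilon n$ grows linearly in $n$.
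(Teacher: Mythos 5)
Your proof is correct and follows essentially the same route as the paper's: decompose $\Lambda_{K_1}$ along the induced partition, apply Birkhoff plus Egorov to get uniform convergence of visit frequencies on a compact set of measure $>1-2\delta$, and then count visits in the window $[n,n+\varepsilon n)$ by differencing two Birkhoff sums. Your explicit restriction to the positive-measure pieces $A_i$, $i\in J$, is a small extra precaution (the paper's choice of $\tau$ tacitly assumes every $\widehat{\mu}(\Lambda_{K_1}\cap\widehat{P}_i)>0$), but otherwise the arguments coincide.
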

\begin{proof}[Proof of the lemma]
The partition $\mathcal{P}=\{P_1,\dots, P_\ell\}$ of $M$ induces a partition
$\widehat{\mathcal{P}}=\{\widehat{P}_1,\dots,\widehat{P}_\ell\}$ of $\widehat{M}$ given by
$\widehat{P}_i=\{\widehat{x}: \pi_0(\widehat{x})\in P_i\}$. Let
\begin{equation}\label{eq:tau}
\tau=\min_{1\le i\le\ell}\{\varepsilon,\widehat{\mu}(\Lambda_{K_1}\cap\widehat{P}_i)/4\}.
\end{equation}
Since $\mu$ is ergodic, Birkhoff's ergodic theorem implies that for each $i=1,\dots,\ell$ and
$\widehat{\mu}$-almost every $\widehat{x}$ we have
\begin{eqnarray*}
\lim\limits_{n\to\infty}\frac1n\text{Card}\Big\{k \in \{0, \dots, n-1\}: \widehat{f}^k(\widehat{x}) \in\Lambda_{K_1}\cap\widehat{P}_i\Big\}=\widehat{\mu}(\Lambda_{K_1}\cap\widehat{P}_i).
\end{eqnarray*}
Using Egorov's theorem and the regularity of the measure $\mu$, we conclude that there exists a compact set $\Lambda_2^{\prime}$ of
$\widehat{\mu}$-measure at least $1-2\delta$ such that the above convergence is uniform on $\Lambda_2^{\prime}$ for each $i=1,\dots,\ell$.

Hence, for the number $\tau>0$ given by \eqref{eq:tau}, there is a number $N_2>0$ such that for every $\widehat{x}\in\Lambda_2^{\prime}$, every $n\ge N_2$, and all $i=1,\dots,\ell$ we have
\begin{eqnarray*}
\Big|\text{Card}\Big\{k\in\{0, \dots, n-1\}: \widehat{f}^k(\widehat{x})\in\Lambda_{K_1}\cap \widehat{P}_i \Big\}-\widehat{\mu}(\Lambda_{K_1}\cap\widehat{P}_i)n\Big|\le\tau^2 n.
\end{eqnarray*}
Assume that $N_2$ is chosen large enough that $\displaystyle{\min_{1\le i\le\ell}(\widehat{\mu}(\Lambda_{K_1}\cap\widehat{P}_i)-3\tau)N_2\varepsilon >1}$. Thus for every $\widehat{x}\in\Lambda_2^{\prime}$, every $i=1,\dots,\ell$, and every $n\ge N_2$ we have
$$
\begin{aligned}
\text{Card}&\Big\{k \in \{n, \dots, n(1+\varepsilon)-1\}: \widehat{f}^k(\widehat{x}) \in \Lambda_{K_1}\cap\widehat{P}_i\Big\}\\
&=\text{Card}\Big\{k\in\{0,\dots, n(1+\varepsilon)-1\}: \widehat{f}^k(\widehat{x}) \in \Lambda_{K_1}\cap\widehat{P}_i\Big\}\\
&-\text{Card}\Big\{k\in\{0, \dots, n-1\}: \widehat{f}^k(\widehat{x}) \in \Lambda_{K_1}\cap\widehat{P}_i\Big\}\\
&\ge\widehat{\mu} (\Lambda_{K_1}\cap\widehat{P}_i)n(1+\varepsilon)-n(1+\varepsilon) \tau^2-\widehat{\mu} (\Lambda_{K_1}\cap\widehat{P}_i)n-n\tau^2\\
&= n\varepsilon (\widehat{\mu} (\Lambda_{K_1} \cap \widehat{P}_i)-\tau^2) -2n\tau^2\\
&\ge n\varepsilon (\widehat{\mu} (\Lambda_{K_1} \cap \widehat{P}_i) -3\tau) > 1.
\end{aligned}
$$
Clearly, $f^k(\pi_0( \widehat{x})) \in P_i$ if $\widehat{f}^k(\widehat{x}) \in \widehat{P}_i$. In particular, this is true for the index $i$ with $P_i=\mathcal{P}(\pi_0( \widehat{x}))$. This concludes the proof of the lemma.
\end{proof}
Choose $n\ge\max\{N_1, N_2\}$ such that
\begin{equation}\label{estimates}
e^{-n(\lambda_1-3\varepsilon)} m_0<\frac18
\end{equation}
(recall that $m_0=\dim M$) and a maximal $(n,\rho)$-separated subset $E\subset\pi_0(\Lambda_2^{\prime})$. It is easy to see that
$\bigcup_{x\in E} B_n(x, \rho)$ has $\mu$-measure at least $1-2\delta$. We partition the set $E$ into sets $F_k$, $n\le k<n(1+\varepsilon))$, defined by
$$
F_k=\Big\{x \in E: \min \{\ell\in [n, (1+\varepsilon)n)\cap \mathbb{N}: f^\ell(x)\in\mathcal P(x) \}=k\Big\}
$$
that is, having the same return time $k$ to their partition element. Let $m$ be the index satisfying
$$
\text{Card}\, F_m=\max_{n\le k<n+\varepsilon n}\text{\text{Card}}\,F_k.
$$
Since
$$\text{Card}\,E=\sum_{n \le k < n+\varepsilon n} \text{Card}\,F_k,
$$
we obtain that $(\varepsilon n)\text{Card}\,F_m\ge\text{Card}\,E$. Observing that
$\varepsilon n<e^{\varepsilon n}$, we find using \eqref{C1} that
$$
\text{Card}\,F_m\ge\frac{\text{Card}\,E}{\varepsilon n}\ge e^{n(h_{\mu}(f)-2\varepsilon)}.
$$
Choose a point $x_i$ such that the corresponding element $\mathcal{P}(x_i)$ has the property that $\text{Card}(F_m\cap\mathcal{P}(x_i))$ is maximal. We then have
\begin{eqnarray*}
\text{Card}(F_m \cap \mathcal P(x_i)) \ge \frac1j \text{Card}\, F_m \ge \frac1j e^{n(h_{\mu}(f) -2\varepsilon)}.
\end{eqnarray*}
Recall that exactly after $m$ iterations each point $x \in F_m \cap \mathcal{P}(x_i)$ returns to $\mathcal{P}(x_i)$, and hence to $B(x_i, \rho/4)$. Recall also that for each such $x$ there is $\widehat{x}\in \Lambda_{K_1}$ with $\pi_0( \widehat{x})=x$ so that
$\widehat{f}^m(\widehat{x})\in\Lambda_{K_1}$. Given $x\in F_m\cap\mathcal{P}(x_i)$, let
$$
U_x = f_x^{-m}(B(x_i, \rho/2)).
$$
Notice that $f^m(x) \in B(x_i,\rho/4) \subset B(f^m(x),\rho/2)$ and by \eqref{def-rho},
$$
B(f^m(x),\rho/2)\subset B(f^m(x),(re^{-2\varepsilon}e^{-\varepsilon K_1})^{\frac{1}{\gamma}}).
$$
Consequently, by \eqref{estim}, for every $z\in B(x_i,\rho/4)$ we have
$$
\|D_zf_x^{-m}\|_{f_x^{-m}(z)} \le e^{-m(\lambda_1 -3 \varepsilon)}.
$$
Note that $\|\cdot\|_{f_x^{-m}(z)}=\|\cdot\|_{\widehat{x},-m}$,  and by the definition of regular set $\Lambda_{K_1}$ we have
$$
\|D_zf_x^{-m}\| \le  e^{-m (\lambda_1 -3 \varepsilon)}  m_0.
$$
Hence, by \eqref{estimates},
$$
\text{diam }U_x =\text{diam }f_x^{-m}(B(x_i, \rho/2))<e^{-m(\lambda_1-3\varepsilon)} m_0\rho <\frac18\rho.
$$
Thus $U_x \subset B(x, \frac18 \rho)$ and $\overline{U}_x \subset B(x_i, \rho/2)$. For every two distinct points $ x, y\in F_m\cap\mathcal{P} (x_i)$, there exists
$U_x=f_x^{-m}(B(x_i,\rho/2))$ and $U_y=f_y^{-m}(B(x_i,\rho/2))$ so that
$$
\overline{U}_x \subset B(x_i, \rho/2),\,\overline{U}_y\subset B(x_i,\rho/2)\,\,\text{and} \,\, \overline{U}_x \cap\overline{U}_y=\emptyset.
$$
Otherwise, suppose that there exists $z\in \overline{U}_x \cap \overline{U}_y$. Then
\[
d_m(x,z)\le \rho/2\,\,\text{and}\,\, d_m(y,z)\le \rho/2,
\]
which contradicts to $d_m(x,y)\ge d_n(x,y)>\rho$, since $x,y\in F_m$ are $(n,\rho)$-separated.

For every $x, y\in F_m\cap\mathcal{P}(x_i)$, note that $f_y^{-m}(U_x)\subset U_y$.  Therefore, we can consider  the following maps
\begin{eqnarray*}
f_y^{-m}(U_x) \,\,\, \stackrel{f^m}{\longrightarrow}  \,\,\,U_x \,\,\stackrel{f^m}{\longrightarrow} \,\,\,B(f^m(x), \rho).
\end{eqnarray*}
For every $z\in U_x$, $i=1,\dots ,p$ we have
$$
D_zf^m U_i(z, \xi)   \subset U_i(f^m(z), \lambda^{m} \xi),\quad
D_zf_y^{-m} V_i(z, \xi) \subset V_i(f_y^{-m}(z), \lambda^{m} \xi).
$$
Given a point $*\in F_m\cap\mathcal P (x_i)$, consider the map
\begin{eqnarray*}
f_*^{-m}:  f_y^{-m}(U_x) \to f_*^{-m} f_y^{-m} (U_x).
\end{eqnarray*}
By Corollary \ref{add-re-c}, we have for every $z\in f_y^{-m}(U_x)$ and every $i=1,\dots, p$,
$$
D_zf^{m}U_i(z, \xi)\subset U_i(f^{m}(z), \lambda^{m} \xi), \quad
D_zf_*^{-m} V_i(z, \xi) \subset V_i(f_*^{-m}(z), \lambda^{m} \xi).
$$
Let $R_{\varepsilon, 0}=\overline{B(x_i,\rho/2)}$ and for all $l\ge 0$,
$$
R_{\varepsilon,l+1}=\bigcup\limits_{x\in F_m\cap\mathcal{P}(x_i)}f_x^{-m}(R_{\varepsilon,l}).
$$
They form a family of nested non-empty compact sets and hence, setting
$$
R_{\varepsilon}=\bigcap\limits_{l\ge 0} R_{\varepsilon,l},
$$
we obtain a non-empty compact set which is $f^m$-invariant and also $f^{-m}_x$-invariant for all $x\in F_m\cap\mathcal{P}(x_i)$ in the sense that $\displaystyle{R_{\varepsilon}=\bigcup_{x\in F_m\cap\mathcal{P}(x_i)}f_x^{-m}(R_{\varepsilon})}$. We also have for every $l\ge 0$, every $z\in R_{\varepsilon,l}$,  $i=1,\dots, p$, and every point $* \in F_m\cap\mathcal{P}(x_i)$,
\begin{eqnarray*}
D_zf^{m} U_i(z, \xi)\subset\mathrm{Int}U_i(f^{ m}(z),\xi),\,\, D_zf_*^{-m}V_i(z, \xi)\subset \mathrm{Int}V_i(f_*^{- m}(z),  \xi)
\end{eqnarray*}
and
\begin{eqnarray*}
\| D_zf^{m}(v)\|\ge e^{m(\lambda_{i+1}-4\varepsilon)}\|v\|, \,\, \|D_zf_*^{-m}(w)\|\ge e^{-m(\lambda_i+4\varepsilon)}\|w\|
\end{eqnarray*}
for every $v\in U_i(z, \xi)$, $w\in V_i(z, \xi)$.
Therefore, $f^m|_{R_{\varepsilon}}$ is uniformly expanding and it is topologically conjugate to the one-sided full shift over an alphabet with $\text{Card}\,F_m\cap\mathcal{P}(x_i)$ symbols. This implies
$h_{\text{top}}(f^m|_{R_{\varepsilon}})=\log (\text{Card}\,F_m\cap\mathcal P(x_i))$.
Let $\mathcal{Q}_{\varepsilon}=R_{\varepsilon}\cup f(R_{\varepsilon})\cup\cdots\cup f^{m-1}(R_{\varepsilon})$. We wish to show that $\mathcal{Q}_{\varepsilon}$ is the desired compact set. Clearly, $\mathcal{Q}_{\varepsilon}$ is $f$-invariant, and we have that
$$
h_{\text{top}}(f|_{\mathcal{Q}_{\varepsilon}})=\frac1m\log (\text{Card} F_m\cap\mathcal P (x_i)).
$$
Using the fact that $(1+\varepsilon)n > m\ge n$, we obtain
$$
h_{\text{top}}(f|_{\mathcal{Q}_{\varepsilon}})\ge \frac1m\log\frac1j+\frac{n}{m}(h_{\mu}(f) -2\varepsilon)\ge h_{\mu}(f) -3\varepsilon.
$$
We now show existence of a $\{\lambda_j\}$-dominated splitting over
$\mathcal{Q}_{\varepsilon}$ satisfying \eqref{main-lemma}. Let
$\ell=\text{Card} \,F_m\cap\mathcal P (x_i)$. For every $z\in R_{\varepsilon}$ there is a unique sequence $(y_1y_2\dots y_n\dots)\in\Sigma_\ell^+$ (here $\Sigma_\ell^+$ is the one sided full shift over $\ell$ symbols) such that
$$
z=\bigcap_{n > 0} (  f_{y_1}^{-m}\circ f_{y_2}^{-m}  \circ \cdots \circ f_{y_n}^{-m}(\overline{B(x_i, \rho/2)})).
$$
Hence, for every $v\in T_zM$,
$$
\|D_zf_{y_1}^{-m} (v)\|_{f_{y_1}^{-m}(z)} \le e^{-m(\lambda_1 - 3\varepsilon)}\|v\|_z.
$$
Also, by Corollary \ref{add-re-c} and Remark \ref{back-norm} we have that
\begin{equation}\label{eq:n1}
D_zf^{ m} U_i(z,\xi)\subset  \mathrm{Int} U_i(f^{m}(z), \xi),\,\, D_zf_{y_1}^{- m} V_i(z, \xi) \subset  \mathrm{Int} V_i(f_{y_1}^{-m}(z), \xi).
\end{equation}
and for every $v\in U_i(z, \xi)$, $w\in V_i(z, \xi)$,
\begin{equation}\label{eq:n2}
\|D_zf^{ m}(v)\|\ge e^{m(\lambda_{i+1} -4\varepsilon)}\|v\|,\,\, \|D_zf_{y_1}^{- m}(z)(w)\|\ge e^{-m(\lambda_i+4\varepsilon)}\|w\|.
\end{equation}
We shall show that \eqref{eq:n1} and \eqref{eq:n2} imply existence of a continuous splitting on $R_{\varepsilon}$,
$$
T_zM = E_1(z) \oplus E_2(z) \cdots \oplus E_p(z),
$$
and for every $v\in E_i(z)$,
$$
e^{m (\lambda_i-3\varepsilon)}\|v\| \le\|D_zf^mv\| \le e^{m (\lambda_i+3\varepsilon)}\|v\|.
$$
To see this note that for every $z\in R_{\varepsilon}$, there is a unique sequence
$(\sigma_1\sigma_2\dots\sigma_n\dots)\in\Sigma_\ell^+$ such that
$$
z=\bigcap_{n > 0}(f_{\sigma_1}^{-m} \circ f_{\sigma_2}^{-m}\circ\cdots\circ  f_{\sigma_n}^{-m} (\overline{B(x_i, \rho/2)})).
$$
Note that $f_{\sigma_1}^{-m}\circ f_{\sigma_2}^{-m}\circ\cdots\circ f_{\sigma_n}^{-m}(z)\in R_{\varepsilon}$ for every $n\ge 1$. Since the set $R_\varepsilon$ is compact and by construction, the cones $U_i(z,\xi)$ and $V_i(z,\xi)$ depend continuously on the point
$z\in R_\varepsilon$, existence of invariant subspaces $E_i(z)$ follows from \eqref{eq:n1} and \eqref{eq:n2}. Indeed, using the standard techniques in hyperbolicity theory (see for example, \cite{K95}), one can show that
$$
\begin{aligned}
E_i(z)=\Bigl(\bigcap_{\ell\ge0}^{\infty}Df^{\ell m}&U_{i-1}(f_{\sigma_{\ell}^{\prime}}^{-m} \circ\cdots\circ f_{\sigma_{1}^{\prime}}^{-m}(z),\xi)\Bigr)\bigcap \\
&\Bigl(\bigcap_{\ell\ge0}^{\infty} D(f_{\sigma_{1}}^{-m}\circ\cdots\circ f_{\sigma_{\ell}}^{-m}) V_{i}(f^{\ell m}(z),\xi)\Bigr),
\end{aligned}
$$
where $\sigma_{i}^{\prime}$ is defined so that $\pi_0\widehat{f}^{-im}(\widehat{z})\in U_{\sigma_{i}^{\prime}}=f_{\sigma_{i}^{\prime}}^{-m}(B(x_i,\rho/2))$ and $\widehat{z}$ is chosen such that $\widehat{z}\in \widehat{R}_\varepsilon\subset \widehat{\Lambda}$ with
$\pi_0(\widehat{z})=z$.

\subsection{Constructing compact invariant sets with dominated splitting for small perturbations} We present a proof of Theorem \ref{cont-LE}. Recall that  $h$ is a $C^{1+\gamma}$ map which is  sufficiently close to $f$ in the $C^1$ topology.
Following the construction of $R_{\varepsilon}$, we may find a subset $R_{\varepsilon}(h)$ of $\Lambda_h$ that we will briefly recall. Start with the same $F_m\cap\mathcal P(x_i)$ as in the previous section. For every $x\in F_m\cap\mathcal P(x_i)$, let
$\mathrm{diam} U^h_x=\mathrm{diam} h_x^{-m} (B(x_i, \rho/2))$. One has that
$$
\text{diam }U^h_x<\frac18\rho.
$$
Moreover, one has that $U^h_x\subset B(x, \frac18 \rho)$ and
$\overline{U^h_x}\subset B(x_i, \rho/2)$. For every two distinct points
$x,y\in F_m\cap\mathcal P(x_i)$ we have that
$\overline{U^h_x}\cap\overline{U^h_y} = \emptyset$ and $h_y^{-m}(U_x)\subset U_y$. Consider the following maps
\begin{eqnarray*}
h_y^{-m}(U_x^h) \,\,\, \stackrel{h^m}{\longrightarrow}  \,\,\,U_x^h \,\,\stackrel{h^m}{\longrightarrow} \,\,\,B(h^m(x), \rho).
\end{eqnarray*}
For $z\in U^h_x$, $i=1,\dots,p$ the cones have the following properties:
$$
D_zh^m U_i(z, \xi)\subset U_i(h^m(z),\lambda^{m}\xi),\,\, D_zh_y^{-m}V_i(z, \xi)\subset V_i(h_y^{-m}(z), \lambda^{m}\xi).
$$
Let $R^h_{\varepsilon, 0}=\overline{B(x_i, \rho/2)}$ and for all $l\ge 0$,
$$
R^h_{\varepsilon,l+1}=\bigcup\limits_{x\in F_m\cap\mathcal{P}(x_i)}h_x^{-m} (R^h_{\varepsilon,l}).
$$
These sets form a family of nested non-empty compact sets, yielding a non-empty compact set
$$
R_{\varepsilon}(h)=\bigcap\limits_{l\ge 0}R_{\varepsilon,l}^h
$$
which is $h^m$-invariant and also $h^{-m}_x$-invariant for all
$x\in F_m\cap\mathcal P(x_i)$. Note that $h^m|_{R_{\varepsilon}(h)}$ and
$f^m|_{R_{\varepsilon}}$ are topologically conjugate, since both of them are topologically conjugate to the full shift over $\ell$ symbols with
$\ell=\text{Card}\,F_m\cap\mathcal P(x_i)$. We also have for every
$z\in R_{\varepsilon}(h)$, there is a unique sequence $(\sigma_1\sigma_2\dots \sigma_n\dots)\in\Sigma_\ell^+$ such that
$$
z=\bigcap_{n > 0}h_{\sigma_1}^{-m}\circ h_{\sigma_2}^{-m}\circ\cdots\circ h_{\sigma_n}^{-m} (\overline{B(x_i, \rho/2)})
$$
and for every $i=1, 2,\dots, p$  one has
\begin{eqnarray}\label{h-cone1}
D_zh^{ m} U_i(z, \xi)\subset \mathrm{Int} U_i(h^{m}(z),  \xi),\,\, D_zh_{\sigma_1}^{-m}V_i(z, \xi) \subset  \mathrm{Int} V_i(h_{\sigma_1}^{-m}(z),  \xi)
\end{eqnarray}
and
\begin{eqnarray}\label{h-cone2}
\|D_zh^{ m}(v)\|\ge e^{m(\lambda_{i+1}-5\varepsilon)}\|v\|,\,\, \|D_zh_{\sigma_1}^{-m}(w)\|\ge e^{-m(\lambda_i+5\varepsilon)}\|w\|
\end{eqnarray}
for every $v\in U_i(z, \xi)$, $w\in V_i(z, \xi)$.
Hence, $h^m|_{R_{\varepsilon}(h)}$ is uniformly expanding, and the set
$$
\mathcal{Q}_{\varepsilon}(h)=R_{\varepsilon}(h)\cup h(R_{\varepsilon}(h))\cup\cdots\cup h^{m-1}(R_{\varepsilon}(h))
$$
is $h$-invariant. In addition, we have that
$$
h_{\text{top}}(h|_{\mathcal{Q}_{\varepsilon}(h)})=h_{\text{top}}(f|_{\mathcal{Q}_{\varepsilon}(f)})\ge h_{\mu}(f)-3\varepsilon
$$
and the first statement of the theorem follows. To prove the second statement, observe that by \eqref{h-cone1} and \eqref{h-cone2}, using the standard cone technique in \cite{K95}, one can show that there exists a continuous splitting on $R_{\varepsilon}(h)$
$$
T_zM = E_1(z)\oplus E_2(z)\cdots\oplus E_p(z)
$$
so that for every $v \in E_i(z)$,
\begin{eqnarray}\label{LE-close}
e^{m (\lambda_i-4\varepsilon)}\|v\|_{z} \le \|D_zh^m v\|_{h^m(z)}\le e^{m (\lambda_i+4\varepsilon)}\|v\|_z.
\end{eqnarray}
This implies the second statement. The last statement of the theorem follows immediately from \eqref{LE-close}.


\section{Proofs of Main Results}\label{main-thms}

\subsection{Proof of Proposition \ref{sup-add-aprox}}
Fix a positive integer $m$. Since $\mathcal{F}_*(\Psi,\mu)=\sup\frac1m\int\psi_m\,d\mu$, for every $\mu\in\mathcal{M}(X,f)$ we have
$$
h_{\mu}(f)+\frac1m\int\psi_m\,d\mu\le h_{\mu}(f)+\mathcal{F}_*(\Psi,\mu)\le P_{\text{var}}(f,\Psi),
$$
The variational principle for the topological pressure of a single continuous potential (see \eqref{var-principle}) yields that
$$
P(f,\frac{\psi_m}{m})\le P_{\text{var}}(f,\Psi ).
$$
It follows that
$$
\limsup_{n\to\infty} P(f,\frac{\psi_n}{n})\le P_{\text{var}}(f,\Psi).
$$
On the other hand, for each $\mu\in\mathcal{M}(X,f)$ we have that
$$
h_\mu(f)+\mathcal{F}_*(\Psi,\mu)=\lim_{n\to\infty}\Big(h_{\mu}(f)+\frac1n\int\psi_n\,d\mu \Big)\le\liminf_{n\to\infty}P(f,\frac{\psi_n}{n}).
$$
This yields that
$$
P_{\text{var}}(f,\Psi)=\sup\Big\{h_{\mu}(f) + \mathcal{F}_*(\Psi,\mu):\; \mu\in
\mathcal{M}(X,f)\Big\}\le\liminf_{n\to\infty}P(f,\frac{\psi_n}{n})
$$
and completes the proof of the first equality.

To prove the second equality choose $\mu\in\mathcal{M}(X, f)$ and note that
$\mu\in\mathcal{M}(X, f^k)$ for every $k\in\mathbb{N}$. By the variational principle for the topological pressure (see \eqref{var-principle}), for every $\mu\in\mathcal{M}(X, f)$,
$$
\frac1k P(f^k,\psi_k)\ge\frac1k\Bigl(h_\mu(f^k)+\int\psi_k \,d\mu\Bigr)=h_\mu(f)+\frac1k
\int\psi_k\,d\mu.
$$
It follows that
$$
\liminf_{k\to\infty}\frac1kP(f^k,\psi_k)\ge h_\mu(f)+\lim\limits_{k\to\infty}\frac1k\int\psi_k\,d\mu=h_\mu(f)+\mathcal{F}_*(\Psi,\mu).
$$
Since $\mu$ is any measure in $\mathcal{M}(X,f)$, we have that
$$
\liminf_{k\to\infty}\frac1k P(f^k,\psi_k)\ge\sup\Big\{h_\mu(f)+\mathcal{F}_*(\Psi,\mu): \mu \in\mathcal{M}(X,f)\Big\}=P_{\text{var}}(f,\Psi).
$$
For any $k\in\mathbb{N}$ and $\mu\in\mathcal{M}(X, f^k)$ the measure
$\nu:=\frac1k\sum_{i=0}^{k-1}f_*^i\mu$ is $f$-invariant and $h_\nu(f)=\frac1k h_\mu(f^k)$. Since $\{\psi_{nk}(x)\}_{n\ge1}$ is super-additive with respect to $f^k$, we have
\[
\lim_{n\to\infty}\frac1n\int\psi_{nk}\,d\mu=\sup_{n\ge 1}\frac1n\int\psi_{nk}\,d\mu
\ge\int\psi_{k}\, d\mu.
\]
For each $0\le i\le k-1$, the super-additivity of $\{\psi_n(x)\}_{n\ge1}$ with respect to $f$ implies that
\begin{eqnarray*}
\int\psi_{nk}(x)\,df_*^i\mu &\ge& \int\psi_{k-i}(x)\, df_*^i\mu
+\int \psi_{(n-1)k}(f^{k-i}x)\,df_*^i\mu+\int\psi_i(f^{nk-i}x)\, df_*^i\mu \\
&\ge&2m+\int\psi_{(n-1)k}(f^{k}x)\,d\mu=2m+\int\psi_{(n-1)k}(x)\,d\mu,
\end{eqnarray*}
where $m=-\max_{0\le i\le k-1}\|\psi_i\|_\infty$. Summing over $i$ from $0$ to $k-1$, we obtain that
\[
k\int\psi_{nk}(x)\,d\nu\ge 2mk+k\int\psi_{(n-1)k}(x)\,d\mu.
\]
Dividing both sides by $nk$ and letting $n\to\infty$, we find that
\[
k\mathcal{F}_*(\Psi, \nu)\ge\int\psi_{k}\, d\mu.
\]
This implies that
\[
P_{\text{var}}(f,\Psi)\ge h_{\nu}(f)+\mathcal{F}_*(\Psi,\nu)\ge\frac1k(h_{\mu}(f^k)+\int\psi_k\, d\mu).
\]
Since $\mu\in\mathcal{M}(X,f^k)$ can be chosen arbitrary, this yields that
\[
P_{\text{var}}(f,\Psi)\ge\frac1kP(f^k,\psi_{k})
\]
and since $k$ can be chosen arbitrarily, we obtain that
\[
P_{\text{var}}(f,\Psi)\ge\limsup_{k\to\infty}\frac1kP(f^k,\psi_{k})
\]
and the second equality follows.

\subsection{Proof of Theorem \ref{dim-main}}
We split the proof of the theorem into two steps.

\subsubsection{Dimension estimates under the dominated splitting assumption} Let
$\Lambda$ be a repeller for a $C^{1+\gamma}$ expanding map $f: M\to M$. In this subsection we obtain a lower bound of the Hausdorff dimension of the repeller assuming that $f|\Lambda$ possesses a dominated splitting.

Assume that the map $f|\Lambda$ possesses a $\{\lambda_j\}$-dominated splitting $T_\Lambda M=E_1\oplus E_2\oplus\cdots\oplus E_k$ with $E_1\succeq E_2\succeq\cdots \succeq E_k$ and $\lambda_1>\lambda_2>\cdots>\lambda_k$. Let $m_j=\dim E_j$, $r_j=m_1+\cdots +m_j$ for $j\in \{1,2, \cdots, k\}$ and $r_0=0$. For each $s\in [0,m_0]$, $n\ge 1$ and $x\in \Lambda$, define
\[
\widetilde{\psi}^s(x, f^n):=\sum_{j=1}^{d}m_j\log \|D_xf^n|_{E_j}\|+(s-r_d)\log\| D_xf^n|_{E_{d+1}}\|
\]
if $r_{d}\le s\le r_{d+1}$ for some $d\in \{0, 1,\cdots, k-1\}$. It is clear $\widetilde{\Psi}_f(s):=\{-\widetilde{\psi}^s(x, f^n)\}_{n\ge 1}$ is super-additive. Let $\widetilde{P}_{\mathrm{sup}}(s):=P_{\mathrm{var}}(f|_{\Lambda}, \widetilde{\Psi}_f(s))$. One can easily see that
$\widetilde{P}_{\mathrm{sup}}(s)$ is continuous and strictly decreasing in $s$.

\begin{lemma}\label{onestep}
Assume that the map $f|\Lambda$ possesses a $\{\lambda_j\}$-dominated splitting $T_\Lambda M=E_1\oplus E_2\oplus\cdots\oplus E_k$  with $E_1\succeq E_2\succeq\cdots \succeq E_k$ and $\lambda_1>\lambda_2>\cdots>\lambda_k$. Then $\dim_H\Lambda\ge s_1$, where $s_1$  is the unique root of Bowen's equation
$$
P(f|_\Lambda, -\widetilde{\psi}^s(\cdot, f))=0.
$$
\end{lemma}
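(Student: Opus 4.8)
The plan is to establish the lower bound by exhibiting, for each $s<s_1$, a subset of $\Lambda$ of Hausdorff dimension at least $s$. First I would note that since the splitting is \emph{dominated} and $C^{1+\gamma}$, one has bounded distortion along each subbundle $E_j$ at the level of iterates $f^n$: there is a constant $C>1$ so that for any cylinder $P_\mathbf{i}$ of a fine Markov partition and any two points $x,y\in P_\mathbf{i}$, the ratios $\|D_xf^{|\mathbf{i}|}|_{E_j}\|/\|D_yf^{|\mathbf{i}|}|_{E_j}\|$ are bounded by $C$ (this uses the Hölder continuity of $Df$ together with the uniform backward contraction coming from expansion). Consequently, the images $f^n(P_\mathbf{i})$ of cylinders contain, and are contained in, ``ellipsoids'' whose axis lengths along $E_j$ are comparable to $\|D_xf^n|_{E_j}\|$, and one gets geometric control of the preimages: the cylinder $P_\mathbf{i}$ (a ball of full size under $f^{|\mathbf{i}|}$) sits inside a set whose diameter is comparable to $\max_j \|D_xf^{|\mathbf{i}|}|_{E_j}\|^{-1}$ and which contains a set of comparable ``$s$-dimensional content'' along the first few dominated directions, precisely $\prod$ of the contraction rates, i.e.\ governed by $\exp(\widetilde{\psi}^s(x,f^{|\mathbf i|}))$.

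The core step is a mass-distribution (Frostman-type) argument. Let $s<s_1$, so that $P(f|_\Lambda,-\widetilde\psi^s(\cdot,f))>0$; pick $\beta>0$ with $P(f|_\Lambda,-\widetilde\psi^s(\cdot,f))>\beta>0$. Using the thermodynamic formalism for the \emph{additive} potential $-\widetilde\psi^s(\cdot,f)$ (it is a single continuous function, so standard Bowen–Ruelle theory applies to the topologically transitive repeller $f|\Lambda$), take the equilibrium/Gibbs measure $\nu$ for $-\widetilde\psi^s(\cdot,f)$; by the Gibbs property there is $C>0$ with
\begin{equation}\label{eq:gibbs-bound}
C^{-1}\le \frac{\nu(P_\mathbf{i})}{\exp\bigl(-|\mathbf{i}|\,P + \widetilde\psi^s(x,f^{|\mathbf i|})\cdot(-1)\cdot(-1)\bigr)}\le C,
\end{equation}
where $P=P(f|_\Lambda,-\widetilde\psi^s(\cdot,f))>\beta>0$ and $x\in P_\mathbf{i}$ — more transparently, $\nu(P_\mathbf{i})\le C\,e^{-|\mathbf i|\beta}\,e^{\widetilde\psi^s(x,f^{|\mathbf i|})}$ up to the subadditivity correction between $\widetilde\psi^s(x,f^n)$ and $S_n(-\widetilde\psi^s(\cdot,f))$, which is uniformly bounded here because $T_\Lambda M=E_1\oplus\cdots\oplus E_k$ is a genuine \emph{invariant} splitting. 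Then I would transport $\nu$ onto $\Lambda$ and, covering $\Lambda$ by balls of radius $\delta$, estimate for each such ball $B(z,\delta)$ the number of cylinders $P_\mathbf i$ of ``size $\asymp\delta$'' needed to cover it; the dominated-splitting distortion bound shows this number is uniformly bounded, whence
\begin{equation}\label{eq:frostman}
\nu(B(z,\delta))\le \operatorname{const}\cdot \delta^{s}\cdot e^{-(\text{something})\beta},
\end{equation}
i.e.\ $\nu(B(z,\delta))\le \operatorname{const}\cdot\delta^{s}$ for all small $\delta$. By the mass distribution principle this gives $\dim_H\Lambda\ge\dim_H\nu\ge s$, and letting $s\uparrow s_1$ finishes the proof.

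I expect the main obstacle to be the geometric step: converting the algebraic quantity $\widetilde\psi^s(x,f^n)=\sum_j m_j\log\|D_xf^n|_{E_j}\|+(s-r_d)\log\|D_xf^n|_{E_{d+1}}\|$ into a genuine estimate on $\nu(B(z,\delta))$, i.e.\ showing that a metric ball of radius $\delta$ meets only boundedly many cylinders of the appropriate generation and that on each of them the Gibbs weight is $\lesssim \delta^s$. This is where the dominated splitting is essential — it yields, via the Hölder regularity of $Df$, that the inverse branches $f_x^{-n}$ contract the tangent space in each $E_j$-direction at the rate $\|D_xf^n|_{E_j}\|^{-1}$ with multiplicative distortion independent of $n$, so that cylinders are approximately ``rectangular'' in the adapted metric and their diameters are controlled by the weakest contraction rate. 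Once this comparability is in hand, relating the covering number to $s$ and absorbing the error terms into the positive gap $\beta$ is routine. One should also keep track that the potential $\widetilde\psi^s(\cdot,f)$ used here is built from $\|D_xf|_{E_j}\|$ (the splitting norms), not the singular values; the equality of the resulting root $s_1$ with the root of $P_{\mathrm{sup}}(s)=0$ in the general case (no a priori dominated splitting) is then handled in the second step of the proof of Theorem \ref{dim-main} by approximating $\mu$ via Theorem \ref{main} with a set carrying a dominated splitting — but for this lemma alone the dominated splitting is assumed outright.
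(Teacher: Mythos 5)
Your overall strategy --- build a Gibbs measure for the additive potential $-\widetilde{\psi}^{s}(\cdot,f)$ associated with the splitting norms, then apply the mass distribution principle --- is exactly the paper's strategy. But the step you yourself flag as ``the main obstacle'' is where the argument breaks: you assert that a metric ball $B(z,\delta)$ meets only \emph{boundedly} many cylinders of the appropriate generation. In the non-conformal setting this is false. A cylinder $P_{\mathbf i}$ is comparable to a rectangle with side lengths $\asymp A_j(x,n):=\prod_{l=0}^{n-1}\|D_{f^l(x)}f|_{E_j}\|^{-1}$ (with multiplicities $m_j$), and whatever stopping rule you use to make ``size $\asymp\delta$'' precise, some of these sides are exponentially smaller than others; consequently the number of stopped cylinders meeting a ball of radius $\delta$ grows without bound (roughly like $\prod_{j\le i}(\delta/A_j)^{m_j}$). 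Conversely, if you stop when the \emph{largest} side is $\asymp\delta$ so as to get bounded multiplicity, then the Gibbs weight of a single cylinder is no longer $\lesssim\delta^{s}$. The positive pressure gap $\beta$ cannot absorb this, since the multiplicity is exponential in $n$ with a rate determined by the gaps between the $\lambda_j$, not by $\beta$.

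The paper's proof closes this gap differently: it works directly at $s=s_1$ (pressure zero), stops each cylinder at the generation $n$ where the $(i+1)$-st contraction rate $A_{i+1}(x,n)$ first drops below $r$ (with $i$ determined by $r_i\le s_1<r_{i+1}$), and then replaces your multiplicity count by a \emph{volume} argument: each stopped cylinder meeting $B(z,r)$ contains, inside the concentric ball of radius $2r$, a rectangle of volume $\asymp A_{i+1}^{m_0-r_i}\prod_{j\le i}A_j^{m_j}$; these rectangles are essentially disjoint, so their total volume is $\lesssim r^{m_0}$, and factoring out $A_{i+1}^{m_0-s_1}\gtrsim r^{m_0-s_1}$ yields $\sum_{\mathbf i}A_{i+1}^{s_1-r_i}\prod_{j\le i}A_j^{m_j}\lesssim r^{s_1}$, which is precisely the bound on $\sum_{\mathbf i}\mu(P_{\mathbf i})\ge\mu(B(z,r))$ needed for the mass distribution principle. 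Until you supply this (or an equivalent) disjointness/volume count, the Frostman bound \eqref{eq:frostman} is not established. A second, smaller point: your Gibbs bound should be stated in terms of the Birkhoff sum $\sum_{l=0}^{n-1}\widetilde{\psi}^{s}(f^l(x),f)$, and one only has the one-sided comparison $\|D_xf^n|_{E_j}\|^{-1}\ge\prod_l\|D_{f^l(x)}f|_{E_j}\|^{-1}$; the paper arranges the argument so that only this direction of the inequality is needed, whereas your claim that the discrepancy is ``uniformly bounded'' is not justified for higher-dimensional subbundles.
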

\begin{proof}[Proof of the lemma]Let $\{P_1, P_2\dots, P_k\}$ be a Markov partition of
$\Lambda$. It follows that there is $\delta>0$ such that for each $i=1,2,\dots, k$ the closed $\delta$-neighborhood $\widetilde{P_i}$ of $P_i$ is such that $\widetilde{P_i}\subseteq U$ (here $U$ is an open neighborhood of $\Lambda$ in the definition of the repeller) and
$\widetilde{P_i}\cap\widetilde{P_j}=\emptyset$ whenever $P_i\cap P_j=\emptyset$. Given an admissible sequence ${\mathbf i}=(i_0i_1\dots i_{n-1})$ and the cylinder
$P_{i_0i_1\dots i_{n-1}}$, we denote by $\widetilde{P}_{i_0i_1\dots i_{n-1}}$ the corresponding cylinder. Note that the $\{\lambda_j\}$-dominated splitting can be extended to $U$, since the splitting is continuous on $\Lambda$. Furthermore, note that $x\mapsto E_i(x)$ is H\"{o}lder continuous on $\Lambda$ since the splitting  $T_\Lambda M=E_1\oplus E_2\oplus\cdots\oplus E_k$ is dominated, and the H\"{o}lder continuity of the map $x\mapsto E_i(x)$ can be extended to $U$,  so is the map $x\mapsto \| D_xf|_{E_i}\|$ for every $i=1,2,\cdots, k$.

Since $P(f|_\Lambda, -\widetilde{\psi}^{s_1}(\cdot, f))=0$ and $\widetilde{\psi}^{s_1}(\cdot, f)$ is a H\"{o}lder continuous function on $\Lambda$, there exists a Gibbs measure $\mu$ such that
$$
K^{-1}\exp\Bigl(-\sum_{i=0}^{n-1}\widetilde{\psi}^{s_1}(f^i(x), f)\Bigr)
\le\mu\bigr(P_{i_0i_1\cdots i_{n-1}}\bigr)
\le K\exp\Bigl(-\sum_{i=0}^{n-1}\widetilde{\psi}^{s_1}(f^i(x), f)\Bigr)
$$
for some constant $K >0$ and every $x\in P_{i_0i_1\dots i_{n-1}}$. Since $T_\Lambda M=E_1\oplus E_2\oplus\cdots\oplus E_k$ is a dominated splitting, the angles between different subspaces $E_i$ are uniformly bounded away from zero. Therefore,  $\widetilde{P}_{i_0i_1\dots i_{n-1}}$ contains a rectangle  of sides
\[
\overbrace{a \| D_{\xi_1} f^n|_{E_1}\|^{-1}, \dots  a\| D_{\xi_1} f^n|_{E_1}\|^{-1}}^{m_1} , \dots, \overbrace{ a\| D_{\xi_k} f^n|_{E_k}\|^{-1},\dots,  a\| D_{\xi_k} f^n|_{E_k}\|^{-1} }^{m_k}
\]
where $a>0$ is a constant and  $\xi_i\in \widetilde{P}_{i_0i_1\dots i_{n-1}}$ for each $i=1,2,\cdots, k$.
Since
$f$ is expanding and the map $x\to \| D_x f|_{E_i}\|^{-1}$ is H\"{o}lder continuous, there exists $C_0>0$ so that
\[
\frac 1 C_0\le \frac{\prod_{j=0}^{n-1}\| D_{f^j(x)} f|_{E_i}\|^{-1}}{\prod_{j=0}^{n-1}\| D_{f^j(y)} f|_{E_i}\|^{-1}}\le C_0
\]
for every $x,y\in \widetilde{P}_{i_0i_1\dots i_{n-1}}$. This together with the fact that $\| D_{\xi_i} f^n|_{E_i}\|^{-1}\ge \prod_{j=0}^{n-1}\| D_{f^j(\xi_i)} f|_{E_i}\|^{-1}~(i=1,2,\cdots,k)$
imply that $\widetilde{P}_{i_0i_1\dots i_{n-1}}$ contains a rectangle  of sides
\[
\overbrace{a_1A_{1}(x,n),\dots,a_1A_{1}(x,n) }^{m_1}, \overbrace{a_1A_{2}(x,n),\cdots,a_1A_{2}(x,n) }^{m_2},\dots,\overbrace{a_1A_{k}(x,n),\dots,a_1A_{k}(x,n) }^{m_k}
\]
for some constant $a_1>0$ and $x\in P_{i_0i_1\dots i_{n-1}}$, where $A_{i}(x,n):=\prod_{j=0}^{n-1}\| D_{f^j(x)} f|_{E_i}\|^{-1}$ for each $i=1,2,\cdots, k$.

Without loss of generality, assume that
$r_i\le s_1< r_{i+1}$ for some $i\in \{ 1,2,\cdots, k\}$ and let
$$
\begin{aligned}
\mathcal{Q}=\Big\{{\mathbf i}=(i_0i_1\cdots &i_{n-1}) : a_1A_{i+1}(x, n )
\le r\text{ for all } x \in P_{i_0i_1\cdots i_{n-1}} ; \\
&\mbox{but}  \  a_1 A_{i+1}(y, n-1 ) > r \  \mbox{for some }  y \in P_{i_0i_1 \cdots i_{n-1}} \Big\}.
\end{aligned}
$$
Therefore, for every ${\mathbf i}=(i_0i_1\cdots i_{n-1})\in \mathcal{Q}$ we have
\begin{eqnarray*}
br<a_1A_{i+1}(x, n)\le r\mbox{ for all } x\in P_{i_0i_1\dots i_{n-1}},
\end{eqnarray*}
where $b=C_0^{-1}\min_{x \in \Lambda}\|D_xf\|^{-1}$. Recall that
$A_1(x,n)\le A_2(x,n)\le\cdots\le A_k(x,n)$.

Let  $B$ be a ball of radius $r$ and $\tilde{B}$ a ball of radius $2r$. Put
$$
\mathcal{Q}_1=\{{\mathbf i} \in\mathcal{Q} | P_{\mathbf{i}} \cap B \ne\emptyset\}.
$$
Hence, for $\mathbf{i}\in\mathcal{Q}_1 $ we have $\widetilde{P}_{\mathbf{i}}\cap\tilde{B}$ contains a rectangle of sides
$$
\begin{aligned}
 &\overbrace{a_1A_1(x,n),\dots, a_1A_1(x,n)}^{m_1},\dots,\overbrace{a_1A_i(x,n),\dots, a_1A_i(x,n)}^{m_i}, \\
 &\overbrace{a_1A_{i+1}(x,n),\dots, a_1A_{i+1}(x,n)}^{m_0-r_i}
 \end{aligned}
$$
It follows that
$$
a_1^{m_0} A_{i+1}(x,n)^{m_0-r_i}A_{i}(x,n)^{m_i}\cdots
A_1(x, n)^{m_1} \le\text{vol}_{m_0}(\widetilde{P}_{\mathbf i}\cap\tilde{B}).
$$
Since $$
\begin{aligned}
A_{i+1}(x,n)^{m_0-r_i}&=A_{i+1}(x,n)^{s_1-r_i} A_{i+1}(x,n)^{m_0-s_1}\\
&\ge A_{i+1}(x,n)^{s_1-r_i}(\frac{b}{a_1})^{m_0-s_1}r^{m_0-s_1}
\end{aligned}$$ we have
\[
a_2 r^{m_0-s_1} A_{i+1}(x,n)^{s_1-r_i} A_{i}(x,n)^{m_i}\cdots
A_1(x, n)^{m_1}\le\text{vol}_{m_0}(\widetilde{P}_{\mathbf i}\cap\tilde{B})
\]
for some constant $a_2>0$.
Therefore,
$$
\sum_{{\bf i}\in\mathcal{Q}_1}a_2 r^{m_0-s_1} A_{i+1}(x,n)^{s_1-r_i}A_{i}(x,n)^{m_i}\cdots
A_1(x, n)^{m_1}\le \text{vol}_{m_0}(\tilde{B})\le 2^{m_0}a_3r^{m_0}
$$
for some constants $a_3>0$.
Hence,
\begin{eqnarray*}
\sum_{{\bf i}\in\mathcal{Q}_1}A_{i+1}(x,n)^{s_1-r_i}A_{i}(x,n)^{m_i}\cdots
A_1(x, n)^{m_1}\le a_4 r^{s_1}
\end{eqnarray*}
for some constant $a_4>0$. On the other hand, one has
\begin{eqnarray*}
\mu(B) &\le&\sum_{(i_0i_1\dots i_{n-1})\in\mathcal{Q}_1}\mu(P_{i_0i_1\dots i_{n-1}})\\
&\le& K\sum_{(i_0i_1\dots i_{n-1})\in\mathcal{Q}_1}\exp\Bigl(-\sum_{i=0}^{n-1}\widetilde{\psi}^{s_1}(f^i(x), f)\Bigr) \\
&=& K\sum_{(i_0i_1\dots i_{n-1})\in\mathcal{Q}_1}A_{i+1}(x,n)^{s_1-r_i}A_{i}(x,n)^{m_i}\cdots
A_1(x, n)^{m_1}\\
 &\le& a_5 r^{s_1}
\end{eqnarray*}
for some constant $a_5>0$.
This  implies that $\dim_H\mu\ge s_1$ and hence, $\dim_H\Lambda\ge\dim_H\mu\ge s_1$.
\end{proof}

Observe that an $f^{2^k}$-invariant measure $\mu$ must be $f^{2^{k+1}}$-invariant. This together with the super-additivity of $\{-\widetilde{\psi}^{s}(\cdot, f^n)\}_{n\ge 1}$ yields that for any $f^{2^k}$-invariant measure $\mu$,
\begin{eqnarray*}
\frac{1}{2^{k+1}} P(f^{2^{k+1}},-\widetilde{\psi}^{s} (\cdot, f^{2^{k+1}}))&\ge& \frac{1}{2^{k+1}} (h_{\mu}(f^{2^{k+1}})+2\int-\widetilde{\psi}^{s} (x, f^{2^k})\,d\mu )\\
&=&\frac{1}{2^k}(h_{\mu}(f^{2^k})+\int-\widetilde{\psi}^{s}(x, f^{2^k})d\mu).
\end{eqnarray*}
Hence,
\begin{eqnarray}\label{monotone}
\frac{1}{2^{k+1}} P(f^{2^{k+1}},-\widetilde{\psi}^{s}(\cdot, f^{2^{k+1}}))\ge\frac{1}{2^{k}} P(f^{2^{k}},-\widetilde{\psi}^{s}(\cdot, f^{2^{k}})).
\end{eqnarray}
By Proposition \ref{sup-add-aprox}, we have
\begin{eqnarray}\label{limit-bound}
\widetilde{P}_{\text{sup}}(s)=\lim_{k\to\infty}\frac{1}{2^{k}}P(f^{2^{k}},-\widetilde{\psi}^{s}(\cdot, f^{2^{k}}))=P_{\text{var}}(f|_\Lambda,\{-\widetilde{\psi}^s(\cdot, f^{n})\}).
 \end{eqnarray}
We shall show that under the same requirements as in the above lemma, one can obtain  sharper dimension estimates.

\begin{lemma}\label{result}
Assume that the map $f|\Lambda$ possesses a $\{\lambda_j\}$-dominated splitting $T_\Lambda M=E_1\oplus E_2\oplus\cdots\oplus E_k$ with $E_1\succeq E_2\succeq\cdots \succeq E_k$ and $\lambda_1>\lambda_2>\cdots>\lambda_k$. Then $\dim_H\Lambda\ge s^*$, where $s^*$ is the unique root of Bowen's equation $\widetilde{P}_{\text{sup}}(s)=0$.
\end{lemma}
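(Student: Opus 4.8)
The plan is to bootstrap Lemma~\ref{onestep} to higher iterates of $f$ and then pass to the limit, using the monotonicity \eqref{monotone} and the identity \eqref{limit-bound} established just above. The point is that $s_1$ in Lemma~\ref{onestep} is only the zero of the \emph{one-step} pressure $P(f|_\Lambda,-\widetilde{\psi}^s(\cdot,f))$, whereas $s^*$ is the zero of the genuine super-additive pressure $\widetilde{P}_{\text{sup}}(s)=\lim_k\tfrac{1}{2^k}P(f^{2^k}|_\Lambda,-\widetilde{\psi}^s(\cdot,f^{2^k}))$; so one has to run Lemma~\ref{onestep} with $f$ replaced by $f^{2^k}$ and let $k\to\infty$.

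First I would check that for each $k\ge 0$ the map $g:=f^{2^k}$ satisfies the hypotheses of Lemma~\ref{onestep} with $\Lambda$ as repeller: $g$ is $C^{1+\gamma}$ and expanding, the given splitting of $T_\Lambda M$ is again dominated for $g$ with distinct rates $2^k\lambda_1>\cdots>2^k\lambda_k$, and, straight from the definition of $\widetilde{\psi}^s$, the one-step super-additive potential attached to $g$ and this splitting is exactly $-\widetilde{\psi}^s(\cdot,f^{2^k})$. Lemma~\ref{onestep} then yields $\dim_H\Lambda\ge s_1^{(k)}$, where $s_1^{(k)}$ is the unique zero of $P(f^{2^k}|_\Lambda,-\widetilde{\psi}^s(\cdot,f^{2^k}))=0$, equivalently of $g_k(s):=\tfrac{1}{2^k}P(f^{2^k}|_\Lambda,-\widetilde{\psi}^s(\cdot,f^{2^k}))=0$.

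The remaining step is the elementary convergence statement $s_1^{(k)}\to s^*$. Each $g_k$ is continuous and strictly decreasing in $s$ on $[0,m_0]$; by \eqref{monotone} the sequence $(g_k)_k$ is nondecreasing in $k$; and by \eqref{limit-bound} it converges pointwise to $\widetilde{P}_{\text{sup}}$. From $g_k\le\widetilde{P}_{\text{sup}}$ and $\widetilde{P}_{\text{sup}}(s^*)=0$ one gets $g_k(s^*)\le 0$, hence $s_1^{(k)}\le s^*$, and $(s_1^{(k)})_k$ is nondecreasing because $(g_k)_k$ is. Conversely, for any small $\varepsilon>0$ strict monotonicity gives $\widetilde{P}_{\text{sup}}(s^*-\varepsilon)>0$, so $g_k(s^*-\varepsilon)>0$ and therefore $s_1^{(k)}>s^*-\varepsilon$ for all large $k$. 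Thus $s_1^{(k)}\nearrow s^*$, and letting $k\to\infty$ in $\dim_H\Lambda\ge s_1^{(k)}$ completes the proof.

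The main obstacle I anticipate is bookkeeping rather than conceptual: making sure Lemma~\ref{onestep} genuinely applies to $f^{2^k}$. In particular $f^{2^k}|\Lambda$ need not be topologically transitive even though $f|\Lambda$ is, so the Markov-partition/Gibbs-measure construction in the proof of Lemma~\ref{onestep} must be carried out on a transitive component of the subshift of finite type coding $\Lambda$ — one selects the component carrying the full pressure $P(f^{2^k}|_\Lambda,-\widetilde{\psi}^s(\cdot,f^{2^k}))$, builds the Gibbs measure there, and notes its Hausdorff dimension still bounds $\dim_H\Lambda$ from below; by the cyclic symmetry of the components this does not alter the zero $s_1^{(k)}$. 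Everything else (the two-sided Hölder/expansion estimates on $\|D_xf^{2^k}|_{E_j}\|$ and the rectangle–volume comparison) is word for word as in the proof of Lemma~\ref{onestep}.
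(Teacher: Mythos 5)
Your proposal is correct and follows essentially the same route as the paper: apply Lemma~\ref{onestep} to $f^{2^k}$, use \eqref{monotone} to get monotonicity of the roots $s_1^{(k)}$ in $k$, and use \eqref{limit-bound} to identify their limit with the zero $s^*$ of $\widetilde{P}_{\text{sup}}$. Your extra care about the possible failure of topological transitivity of $f^{2^k}|\Lambda$ (needed for the Markov partition and Gibbs measure in the proof of Lemma~\ref{onestep}) addresses a detail the paper's own proof silently glosses over, but does not change the argument.
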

\begin{proof}[Proof of the lemma]
Note that for each $n\in\mathbb{N}$, the set $\Lambda$ is also a repeller for $f^{2^n}$.
Using Lemma \ref{onestep}, for every $n\in\mathbb{N}$, one has $\dim_H\Lambda\ge s_n$
where $s_n$ is the unique root of the equation
$$
P(f^{2^n}|_\Lambda, -\widetilde{\psi}^{s} (\cdot, f^{2^n}))=0.
$$
By \eqref{monotone}, we have that $s_n\le s_{n+1}$ and hence, there is a limit
$s^*:= \lim_{n \to\infty} s_n$. We have that $\dim_H\Lambda\ge s^*$. It now follows from
\eqref{limit-bound} that $\widetilde{P}_{\text{sup}}(s^*)=0$.
\end{proof}

\begin{remark} In the proof of Lemmas \ref{onestep} and \ref{result}, we only use the fact that the splitting $T_\Lambda=E_1\oplus E_2\oplus\cdots\oplus E_k$ is dominated. We still put $\{\lambda_j\}-$dominated splitting condition in the statement of the above two lemmas, since it is required that there is a $\{\lambda_j\}-$dominated splitting  over the constructed compact invariant set  in the proof of Theorem \ref{dim-main}.
\end{remark}

\subsubsection{Proof of the theorem}\label{main-proof} Let $m\le s^*< m+1$ be the unique root of $P_{\mathrm{sup}}(s)=0$. We first observe that
\begin{eqnarray*}
&&P_{\text{var}}(f|_{\Lambda}, \{-\psi^{s^*}(\cdot, f^n)\})\\
&&=\sup_{\mu\in\mathcal{M}(f|_\Lambda)}\Big\{h_{\mu}(f)-\lim\limits_{n\to\infty}
\frac1n\int\psi^{s^*}(x, f^n)\,d\mu\Big\} \\
&&=\sup_{\mu\in\mathcal{E}(f|_\Lambda)}\Big\{h_\mu(f)-\bigr(\lambda_1(\mu)+\cdots + \lambda_m(\mu)+(s^*-m)\lambda_{m+1}(\mu)\bigr)\Big\},
\end{eqnarray*}
where $\lambda_1(\mu)\ge\cdots\ge\lambda_{m_0}(\mu)$ are the Lyapunov exponents of
$\mu$. It follows that for every $\varepsilon >0$ there exists an egodic measure
$\mu\in\mathcal{E}(f|_\Lambda)$ such that
\begin{eqnarray*}
P_{\text{var}}(f|_{\Lambda}, \{-\psi^{s^*} (\cdot, f^n)\}) - \varepsilon < h_{\mu}(f) - \bigr(\lambda_1(\mu)+ \cdots + \lambda_m(\mu) + (s^*-m) \lambda_{m+1}(\mu)\bigr ).
\end{eqnarray*}
Applying now Theorem \ref{main} to measure $\mu$ we find a compact $f$-invariant set
$\Lambda_{\varepsilon}\subset\Lambda$ such that
\begin{enumerate}
\item $\Lambda_\varepsilon$ admits a $\{\lambda_j\}$-dominated splitting
$T_{\Lambda_{\varepsilon}}M=E_1\oplus E_2\oplus\cdots\oplus E_k$;
\item $h_{\text{top}}(f|_{\Lambda_{\varepsilon}})\ge h_{\mu}(f)-\varepsilon/2$;
\item $e^{\lambda_j(\mu) -\varepsilon}\|v\|_x \le\| D_xf(v)\|_{f(x)}\le e^{\lambda_j(\mu) + \varepsilon}  \|v\|_x $ for every $v \in E_j,  \ j=1, \cdots, k$.
\end{enumerate}
The third property implies that for each $j=1,\dots, k$ and each ergodic measure $\nu$ supported on $\Lambda_\varepsilon$,
\[
\lambda_j(\mu)-\varepsilon\le \lambda_j(\nu)<\lambda_j(\mu)+\varepsilon,\,\, j=1,2,\cdots, m_0.
\]
By Lemma \ref{result}, we have
\begin{eqnarray}\label{fractal-dim}
\dim_H \Lambda_{\varepsilon} \ge s_{\varepsilon},
\end{eqnarray}
where $s_{\varepsilon}$ is the unique root of the equation
$P_{\text{var}}(f|_{\Lambda_{\varepsilon}}, \{-\widetilde{\psi}^s(\cdot, f^n)\})=0$. If $s_{\varepsilon}\ge s^*$, we have
\[
\dim_H\Lambda \ge \dim_H\Lambda_\varepsilon \ge s_{\varepsilon}\ge s^*
\]
and the desired result follows. Otherwise, assume that $r_d\le s^*\le r_{d+1}$ for some
$d\in \{0,1,\cdots, k-1\}$. By the variational principle for the entropy, there exists an ergodic measure $\nu$ on $\Lambda_\varepsilon$ such that
$h_{\text{top}}(f|_{\Lambda_{\varepsilon}})\le h_\nu(f|_{\Lambda_\varepsilon})+\varepsilon/2$ and hence,
\[
\begin{aligned}
&\lim_{n\to\infty}\frac 1 n \int -\widetilde{\psi}^{s^*}(x, f^n)\, d\nu\\
&=\sum_{j=1}^{d}m_j\lim_{n\to\infty}-\frac 1n\int \log \|D_xf^n|_{E_j}\|\,d\nu+ (s^*-r_d)\lim_{n\to\infty}-\frac 1n\int \log \|D_xf^n|_{E_{d+1}}\|\,d\nu\\
&\le \sum_{j=1}^{r_d}-\lambda_j(\nu) +r_d\varepsilon +\sum_{j=r_d+1}^{m} -\lambda_j(\nu) +(m-r_d)\varepsilon -(s^*-m)\lambda_{m+1}(\nu)+(s^*-m)\varepsilon\\
&\le \sum_{j=1}^{m}-\lambda_j(\nu)-(s^*-m)\lambda_{m+1}(\nu)+m_0\varepsilon,
\end{aligned}
\]
where the second inequality follows from (3). Similarly, one has that
\[
\lim_{n\to\infty}\frac 1 n \int -\widetilde{\psi}^{s^*}(x, f^n)\, d\nu\ge \sum_{j=1}^{m}-\lambda_j(\nu)-(s^*-m)\lambda_{m+1}(\nu)-m_0\varepsilon.
\]
It follows that
\begin{eqnarray*}
&&P_{\text{var}}(f|_\Lambda, \{-\psi^{s^*}(\cdot, f^n)\})-\varepsilon\\
 &&< h_\mu(f) - \bigr(\lambda_1(\mu)+ \cdots +\lambda_m(\mu) + (s^* - m) \lambda_{m+1}(\mu) \bigr) \\
&&\le  h_\nu(f|_{\Lambda_\varepsilon})- \bigr(\lambda_1(\nu)+ \cdots +\lambda_m(\nu) + (s^* - m) \lambda_{m+1}(\nu) \bigr)+(s^*+1)\varepsilon\\
&&\le h_\nu(f|_{\Lambda_\varepsilon}) +\lim_{n\to\infty}\frac 1 n \int -\widetilde{\psi}^{s^*}(x, f^n)\, d\nu+2(m_0+1)\varepsilon\\
&&\le P_{\text{var}}(f|_{\Lambda_{\varepsilon}}, \{-\widetilde{\psi}^{s^*} (\cdot, f^n)\}) + 2(m_0+1)\varepsilon.
\end{eqnarray*}
Since
\begin{eqnarray*}
|s^*-s_{\varepsilon} |\log\kappa\le \Big|P_{\text{var}}(f|_{\Lambda_{\varepsilon}}, \{-\widetilde{\psi}^{s_{\varepsilon}}(\cdot, f^n)\})- P_{\text{var}}(f|_{\Lambda_{\varepsilon}}, \{-\widetilde{\psi}^{s^*} (\cdot, f^n)\})\Big|\le |s^*-s_\varepsilon |\log L,
\end{eqnarray*}
where $L=\max_{x\in\Lambda}\|D_xf\|$, we obtain that
\begin{eqnarray*}
|s^*-s_\varepsilon |\log\kappa\le P_{\text{var}}(f|_{\Lambda_{\varepsilon}},
\{-\widetilde{\psi}^{s_\varepsilon}(\cdot, f^n)\}) - P_{\text{var}}(f|_{\Lambda_{\varepsilon}},
\{-\widetilde{\psi}^{s^*}(\cdot, f^n)\})<2(m_0+2)\varepsilon.
\end{eqnarray*}
Hence,
$$
s_\varepsilon\ge s^* - [2(m_0+2)/\log\kappa]\varepsilon.
$$
This together with \eqref{fractal-dim} yields that
$$
\dim_H\Lambda\ge\dim_H\Lambda_{\varepsilon}\ge s^* -[(m_0+2)/\log\kappa]\varepsilon.
$$
Since $\varepsilon$ can be chosen arbitrary small, this implies that
$\dim_H\Lambda\ge s^*$.

\subsection{Proof of Corollary \ref{cor-dim-main}}

By Theorem \ref{dim-main}, one has $\dim_H\Lambda\ge s^*$ where $s^*$is the unique root of the equation $P_{\text{var}}(f|_\Lambda, -\{\psi^s(\cdot, f^n)\})= 0$. For every
$s\in [0,m_0]$ it follows from \eqref{monotone} and \eqref{limit-bound} that
$$
P_{\text{var}}(f|_\Lambda, -\{\psi^s(\cdot, f^n)\})\ge P(f|_\Lambda, -\psi^s(\cdot, f)).
$$
Hence, $s^*\ge s_1$ where $s_1$ is the unique root of the equation
$P(f|_\Lambda, -\psi^{s}(\cdot, f))= 0$. The desired result immediately follows.

\subsection{Proof of Theorem \ref{upper-dim} } As in the proof of Lemma \ref{onestep}, we may assume that the map $x\mapsto m(D_xf|_{E_i})$ is H\"{o}lder continuous on $U$ for each $i=1,2,\dots, k$.

Choose a number $s$ such that $t^*<s\le m_0$ and assume that
$\ell_d \le s\le \ell_{d+1}$ for some $d\in \{0,1,\dots, k-1\}$ (see Section 3.3 for the definition of $\ell_d$). Since $\widetilde{P}_{\mathrm{sub}}(s) <0$, we may find a positive integer $q$ for which
$$
\sum_{{\mathbf i}\in S_q}e^{-\widetilde{\varphi}^s (y_{\mathbf i},f^q)}< 1
$$
for all $y_{\mathbf i}\in P_{\mathbf i}$, where ${\mathbf i} =(i_0i_1\dots i_{q-1})$ is an admissible sequence and $P_{\mathbf i}$ is a cylinder (see \eqref{cyn}). For any $n\ge 1$, let $B_i(x,nq):=\prod_{j=0}^{n-1}m(D_{f^{jq}(x)}f^q|_{E_i})^{-1}$ for $i=1,2,\dots, k$,  it follows that for all $y_{\mathbf i}\in P_{\mathbf i}$,
$$
\sum_{{\mathbf i} \in S_q} B_{k-d}(y_{\mathbf i},q)^{s-\ell_d}B_{k-d+1}(y_{\mathbf i},q)^{m_{k-d+1}}\cdots B_{k-1}(y_{\mathbf i},q)^{m_{k-1}} B_{k}(y_{\mathbf i},q)^{m_{k}}  < 1.
$$
  Given $0<r\le 1$, set
\begin{eqnarray*}
\begin{aligned}
\mathcal Q = \Big\{{\mathbf i }=(i_0i_1\dots& i_{nq-1}): B_{k-d}(x, nq)\le r \text{ for all }x\in P_{i_0i_1\dots i_{nq-1}} \\
&\text{but }r<B_{k-d}(y, (n-1)q)\text{ for some }
y\in P_{i_0i_1\dots i_{(n-1)q-1}}\Big\}.
\end{aligned}
\end{eqnarray*}
Since $x\mapsto B_i(x,q)$ is H\"{o}lder continuous and $f^q$ is expanding, there exists a $C_0>1$ such that for each $n\ge 1$, all $i\in\{1,2,\dots, k\}$, and any
$x,y\in P_{i_0i_1\dots i_{nq-1}}$,
\[
C_0^{-1}\le \frac{B_i(x,nq)}{B_i(y,nq)}\le C_0.
\]
This implies that for all $x\in P_{i_0i_1\dots i_{nq-1}}$,
$$
Cr<B_{k-d}(x, nq)\le r,
$$
where $C=C_{0}^{-1}\min_{x\in \Lambda} B_{k-d}(x,q)$.
For every admissible sequence $(i_0i_1\dots)$, there is a unique integer $n$  such that
$(i_0, \dots, i_{nq-1})\in\mathcal Q$. In particular,
$\Lambda\subset\bigcup_{{\mathbf i}\in\mathcal{Q}}P_{\mathbf i}$. Note that
$$
m(D_xf^{nq}|_{E_1})^{-1}\le m(D_xf^{nq}|_{E_2})^{-1}\le \cdots\le m(D_xf^{nq}|_{E_k})^{-1}
$$
and for all $i\in \{1,2,\cdots, k\}$ and any $x\in \Lambda$,
$$
m(D_xf^{nq}|_{E_i})^{-1}\le B_i(x,nq).
$$
Since the splitting $T_\Lambda M=E_1\oplus E_2\oplus\cdots \oplus E_k$ is dominated, we conclude that the number of balls of radius $r$ required to cover $\Lambda$ is at most the following number times a constant
\begin{eqnarray*}
\begin{aligned}
&\sum_{{\mathbf i}\in\mathcal{Q}}\frac{m(D_{y_{\mathbf i}}f^{nq}|_{E_k})^{-m_k}}{r^{m_k}}\cdots \frac{m(D_{y_{\mathbf i}}f^{nq}|_{E_{k-d+1}})^{-1}}{r^{m_{k-d+1}}}  \\
&\le\sum_{{\mathbf i}\in\mathcal{Q}}\frac{B_k(y_{\mathbf i},nq)^{m_k}}{B_{k-d}(y_{\mathbf i},nq)^{m_k}}\cdots\frac{B_{k-d+1}(y_{\mathbf i},nq)^{m_{k-d+1}}}{B_{k-d}(y_{\mathbf i},nq)^{m_{k-d+1}}}\\
&=\sum_{{\bf i}\in\mathcal{Q}} B_k(y_{\mathbf i},nq)^{m_k} \cdots B_{k-d+1}(y_{\mathbf i},nq)^{m_{k-d+1}}B_{k-d}(y_{\mathbf i},nq)^{-\ell_d}\\
&=\sum_{{\bf i}\in\mathcal{Q}}B_k(y_{\mathbf i},nq)^{m_k} \cdots B_{k-d+1}(y_{\mathbf i},nq)^{m_{k-d+1}}B_{k-d}(y_{\mathbf i},nq)^{s-\ell_d}B_{k-d}(y_{\mathbf i},nq)^{-s}\\
 &\le  C^{-s} r^{-s}.
 \end{aligned}
\end{eqnarray*}
This implies that $\overline{\dim}_B\Lambda\le s$.
Since the number $s$ can be chosen arbitrary, this implies that
$\overline{\dim}_B\Lambda\le t^*$.

\subsection{Proof of Theorem \ref{usc-sub-pressure}}
Denote by $\pi:\Lambda_f\to\Lambda_h$ the homeomorphism that conjugates $f$ and $h$, i.e., $\pi\circ f=h\circ \pi$. Note that $\pi$ is close to the identity.

Fix $n\ge 1$ and $0\le t\le m_0$ and let $g(x)=-\frac1n \varphi^{t}(x, h^n)$. We have that
\[
P(h|_{\Lambda_h}, g)=P(f|_{\Lambda_f}, g\circ\pi)
\]
and that $P(f|_{\Lambda_f}, g\circ\pi)$ is sufficiently close to
$P(f|_{\Lambda_f}, -\frac1n \varphi^{t}(x, f^n))$. This means that the map
$f\mapsto P(f|_{\Lambda_f}, -\frac1n \varphi^{t}(x, f^n))$  is continuous. On the other hand, we have that
\[
P(f|_{\Lambda_f}, \Phi_f(t))=\lim_{n\to\infty}P(f|_{\Lambda_f}, -\frac1n \varphi^{t}(x, f^n))=\inf_{n\ge 1}P(f|_{\Lambda_f}, -\frac1n \varphi^{t}(x, f^n))
\]
where the first equality is proved in \cite[Proposition 2.1]{bch} and the second one in \cite[Lemma 2]{zhang}. This implies the desired upper semi-continuity of the map
$f\mapsto P(f|_{\Lambda_f}, \Phi_f(t))$.

\subsection{Proof of Theorem \ref{cont-LE1}}\label{s6.6} Fix $0\le t\le m_0$. Since the map $\nu\mapsto h_{\nu}(f)+\mathcal{F}_*(\Phi_f(t),\nu)$ is upper semi-continuous, by the variational principle there is an ergodic equilibrium measure $\mu=\mu_t$ for $\Phi_f(t)$, i.e., we have that
\[
h_{\mu}(f)+\mathcal{F}_*(\Phi_f(t),\mu)=P(f|_{\Lambda_f},\Phi_f(t)).
\]
Let $h$ be a $C^{1+\gamma}$ map that is sufficiently close to $f$ in the $C^1$ topology. Given $\varepsilon>0$, consider the compact invariant sets $\mathcal{Q}_{\varepsilon}(h)$ and $\mathcal{Q}_{\varepsilon}(f)$ constructed in Theorem \ref{cont-LE}. Note that
$\mathcal{Q}_{\varepsilon}(h)=\pi(\mathcal{Q}_{\varepsilon}(f))$ where $\pi$ is the conjugacy map between $f$ and $h$. We have that
$$
h_{\text{top}}(h|_{\mathcal{Q}_{\varepsilon}(h)})=h_{\text{top}}(f|_{\mathcal{Q}_{\varepsilon}})\ge h_\mu(f) -\varepsilon.
$$
Moreover, there is a continuous splitting on $\mathcal{Q}_{\varepsilon}(h)$
$$
T_zM=E_1(z)\oplus E_2(z)\cdots\oplus E_r(z),\,\text{ for all }z\in \mathcal{Q}_{\varepsilon}(h)
$$
such that for every $v\in E_i(z)$,
$$
e^{m (\lambda_i-\varepsilon)}\|v\|_z\le\|D_zh^mv\|_{h^m(z)} \le e^{m (\lambda_i+\varepsilon)}\|v\|_z,
$$
where $\lambda_i=\lambda_i(\mu)$ are the Lyapunov exponents of $\mu$. Thus, for every $h$-invariant measure $\nu$ with $\text{supp }\nu\subset\mathcal{Q}_{\varepsilon}(h)$, its Lyapunov exponents on $E_i(z)$ are between
$\lambda_i -\varepsilon$ and $\lambda_i + \varepsilon$. It follows that
\begin{eqnarray*}
\mathcal{F}_*(\Phi_h(t),\nu)&=&-(\lambda_1(\nu)+\lambda_2(\nu)+\cdots
+\lambda_{[t]}(\nu))-(t-[t])\lambda_{[t]+1}(\nu)\\
&\ge& -(\lambda_1+\lambda_2+\cdots+\lambda_{[t]})-(t-[t])\lambda_{[t]+1}-t\varepsilon\\
&=&\mathcal{F}_*(\Phi_f(t),\mu)-t\varepsilon.
\end{eqnarray*}
Now we choose a $h$-invariant ergodic measure $\nu$ on $\mathcal{Q}_{\varepsilon}(h)$ such that $h_{\text{top}}(h|_{\mathcal{Q}_{\varepsilon}(h)})\le h_\nu(h|_{\mathcal{Q}_{\varepsilon}(h)})+\varepsilon$. This yields that
\begin{eqnarray*}
P(h|_{\Lambda_h},\Phi_h(t)) &\ge& P(h|_{\mathcal{Q}_{\varepsilon}(h)},\Phi_h(t))\\
&\ge& h_\nu(h|_{\mathcal{Q}_{\varepsilon}(h)})+ \mathcal{F}_*(\Phi_h(t),\nu)\\
&\ge&h_{\text{top}}(h|_{\mathcal{Q}_{\varepsilon}(h)})
+\mathcal{F}_*(\Phi_f(t),\mu)-(t+1)\varepsilon \\
&\ge&h_{\mu}(f)+\mathcal{F}_*(\Phi_f(t),\mu)-(t+2)\varepsilon\\
&\ge&P(f|_{\Lambda_f},\Phi_f(t)) - (m_0 +2)\varepsilon.
\end{eqnarray*}
This yields the desired result.

\subsection{Proof of Theorem \ref{carath}}
By Theorem \ref{con-top}, the sub-additive topological pressure
$P_{\text{sub}}(t)=P(f|_{\Lambda_f},\Phi_f(t))$ is continuous at $f$ and so is the zero
$\alpha_0=\alpha_0(f)$ of Bowen's equation $P_{\text{sub}}(t)=0$. Hence, the second statement follows from the first one.

Since the function $P_{\text{sub}}(t)$ is strictly decreasing in $t$, for each $t<\alpha_0$ we have that $P_{\text{sub}}(t)>0$. Fix such a number $t$, and take $\delta>0$ so that $P_{\text{sub}}(t)-\delta>0$. Since $\displaystyle{P_{\text{sub}}(t)=\lim_{r\to 0}P(f|_{\Lambda_f},\Phi_f(t),r)}$, there exists $r_0>0$ such that for each $0<r<r_0$ one has
\[
P_{\text{sub}}(t)-\delta<P(f|_{\Lambda_f},\Phi_f(t),r)<P_{\text{sub}}(t)+\delta.
\]
 Fix such a small $r>0$. It follows from the definition of the topological pressure as the Carath\'eodory singular dimension (see \eqref{dimC}) that
\[
m(\Lambda_f,\Phi_f(t), P_{\text{sub}}(t)-\delta, r)=+\infty.
\]
Hence, for each $K>0$, there exists $L\in \mathbb{N}$ so that for any $N\ge L$ we have that
\begin{eqnarray*}
&&e^{-N(P_{\text{sub}}(t)-\delta)}\inf \Big\{ \sum_{i} \exp\bigr(\sup_{y\in B_{n_i}(x_i,r)}-\varphi^{t}(y,f^{n_i})\bigr) \Big\}\\
&&\ge
\inf \Big\{ \sum_{i} \exp\bigr(-(P_{\text{sub}}(t)-\delta)n_i+\sup_{y\in B_{n_i}(x_i,r)}-\varphi^{t}(y,f^{n_i})\bigr) \Big\}\ge K.
\end{eqnarray*}
This yields that
\[
\inf \Big\{ \sum_{i} \exp\bigr(\sup_{y\in B_{n_i}(x_i,r)}-\varphi^{t}(y,f^{n_i})\bigr) \Big\}
\ge e^{N(P_{\text{sub}}(t)-\delta)} K
\]
where the infimum is taken over all collections $\{B_{n_i}(x_i,r)\}$ of Bowen's balls with $x_i\in \Lambda_f$, $n_i\ge N$, which cover $\Lambda_f$. Letting $N\to \infty$, we have that
\begin{eqnarray}\label{left-crt}
m(\Lambda_f,t,r)=+\infty
\end{eqnarray}
for any $t<\alpha_0$.

On the other hand, for each $t>\alpha_0$ we have that $P_{\text{sub}}(t)<0$. Fix such a number $t$, and take $\widetilde{\delta}>0$ so that $P_{\text{sub}}(t)+\widetilde{\delta}<0$. Similarly, there exists $r_1>0$ such that for each $0<r<r_1$ one has
\[
P_{\text{sub}}(t)-\widetilde{\delta}<P(f|_{\Lambda_f},\Phi_f(t),r)<P_{\text{sub}}(t)+\widetilde{\delta}.
\]
Fix such a small $r>0$. We have that
\[
m(\Lambda_f,\Phi_f(t), P_{\text{sub}}(t)+\widetilde{\delta}, r)=0.
\]
Hence, for each $\xi>0$, there exists $\widetilde{L}\in \mathbb{N}$ so that for any $N\ge \widetilde{L}$ we have that
\begin{eqnarray*}
&&e^{-N(P_{\text{sub}}(t)+\widetilde{\delta})}\inf \Big\{ \sum_{i} \exp\bigr(\sup_{y\in B_{n_i}(x_i,r)}-\varphi^{t}(y,f^{n_i})\bigr) \Big\}\\
&&\le
\inf \Big\{ \sum_{i} \exp\bigr(-(P_{\text{sub}}(t)+\widetilde{\delta})n_i+\sup_{y\in B_{n_i}(x_i,r)}-\varphi^{t}(y,f^{n_i})\bigr) \Big\}\le \xi.
\end{eqnarray*}
This implies that
\[
\inf \Big\{ \sum_{i} \exp\bigr(\sup_{y\in B_{n_i}(x_i,r)}-\varphi^{t}(y,f^{n_i})\bigr) \Big\}
\le \xi e^{N(P_{\text{sub}}(t)+\widetilde{\delta})}
\]
where the infimum is taken over all collections $\{B_{n_i}(x_i,r)\}$ of Bowen's balls with $x_i\in \Lambda_f$, $n_i\ge N$, which cover $\Lambda_f$. Letting $N\to \infty$, we have that
\begin{eqnarray}\label{right-crt}
m(\Lambda_f,t,r)=0
\end{eqnarray}
for any $t>\alpha_0$. Combing \eqref{left-crt} and \eqref{right-crt}, we obtain that
\[
\dim_{C,r}(\Lambda_f)=\alpha_0
\]
for any $0<r<\min\{r_0,r_1\}$.
This completes the proof of the theorem.


\section{Application: a comment on Feng and Shmerkin's result} \label{application}

Let $(X,T)$ be a sub-shift of finite type. Here we assume that subshifts of finite type are  defined on a finite alphabet. Recall that a map $A: X \to  \mathbb{R}^{m_0\times m_0}$ induces a matrix cocycle if for $x\in X$ and $n\in\mathbb{N}$ we have that 
$$
A(x,n)=A(T^{n-1}x) \cdots A(x).
$$
We denote by $X^*$ the collection of finite allowable words in $X$, and let $X_n^*$ be the subset of $X^*$ of words of length $n$. A matrix cocycle $A$ on $X$ is said to be locally constant if $A(x)$ only depends on the first coordinate of $x$, that is if 
$x=(x_0,x_1,\dots,x_n,\dots)$, then $A(x)=A(x_0)$.

For $x\in X$, $s\in [0,m_0]$, and $n\in\mathbb{N}$, define a singular valued function 
$\varphi^s(x,n)$ as follows
$$
\varphi^s(x,n)= \alpha_1(A(x,n))\cdots\alpha_m(A(x,n))\alpha_{m+1}(A(x,n))^{s-m},
$$
where $m = [s]$ and $\alpha_1(A(x,n))\ge \alpha_2(A(x,n))\ge \cdots \ge \alpha_{m_0}(A(x,n))$ are the square roots of the eigenvalues of $A(x,n)^*A(x,n)$.  It is well known that the   singular valued function is sub-multiplicative, i.e.,
$$
\varphi^s(x,{n+m}) \le  \varphi^s(x,n)  \times \varphi^s(T^n x,m).
$$
For $A: X \to  \mathbb{R}^{m_0\times m_0}$, define
$$
P(A,s) = \lim\limits_{n \to \infty} \frac1n \log ( \sum\limits_{\mathbf{i} \in X_n^*} \sup_{y \in [\mathbf{i}]}  \varphi^s(A(y,n))  \in [-\infty, \infty).
$$
The limit can be easily seen to exist, due to sub-multiplicativity of the expression in the  parenthesis.
\begin{theorem}\label{fs}
Let  $A: X\to GL(\mathbb{R},m_0) $ be a H\"{o}lder continuous cocycle. Then the map $P(A,s)\to\mathbb{R}$ is continuous map on $A$.
\end{theorem}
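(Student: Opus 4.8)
The plan is to recognize $P(A,s)$ as the sub-additive topological pressure of a Hölder sub-additive potential over the subshift and then to reproduce, in this linear-cocycle setting, the two-sided scheme behind Theorem~\ref{con-top} (easy upper semi-continuity plus a Katok-type argument for lower semi-continuity). Set $m=[s]$ and write $\varphi^s(A(x,n))=\alpha_1(A(x,n))\cdots\alpha_m(A(x,n))\,\alpha_{m+1}(A(x,n))^{s-m}$; sub-multiplicativity of $\varphi^s$ means $\Psi^s_A:=\{\log\varphi^s(A(\cdot,n))\}_{n\ge1}$ is a sub-additive sequence of Hölder functions on $X$, and $P(A,s)=P(T,\Psi^s_A)$ in the sense of \eqref{pressure-sub}. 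Since $T$ is a subshift of finite type, the entropy map is upper semi-continuous, so the variational principle \eqref{var-principle} applies:
\[
P(A,s)=\sup\big\{h_\mu(T)+\mathcal F_*(\Psi^s_A,\mu):\ \mu\in\mathcal M(X,T)\big\},
\]
and for ergodic $\mu$ the Oseledets and Kingman theorems give $\mathcal F_*(\Psi^s_A,\mu)=\Lambda^s_A(\mu):=\lambda_1(\mu)+\cdots+\lambda_m(\mu)+(s-m)\lambda_{m+1}(\mu)$, where $\lambda_1(\mu)\ge\cdots\ge\lambda_{m_0}(\mu)$ are the Lyapunov exponents of the cocycle $A$ with respect to $\mu$. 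Since $A$ takes values in $GL(\mathbb R,m_0)$ and $X$ is compact, $\|A^{\pm1}\|$ is bounded and $P(A,s)$ is finite. Throughout, the topology on cocycles is uniform convergence; $A$ itself is Hölder, but no regularity beyond continuity into $GL(\mathbb R,m_0)$ will be asked of the perturbations.

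\textbf{Upper semi-continuity.} By sub-multiplicativity the partition sums $Z_n(A,s):=\sum_{\mathbf i\in X_n^*}\sup_{y\in[\mathbf i]}\varphi^s(A(y,n))$ satisfy $Z_{n+\ell}(A,s)\le Z_n(A,s)\,Z_\ell(A,s)$, hence $P(A,s)=\inf_{n\ge1}\tfrac1n\log Z_n(A,s)$. For each fixed $n$, $A\mapsto Z_n(A,s)$ is continuous in the uniform topology: it is a finite sum, and for each word the function $y\mapsto\varphi^s(A(y,n))$ is continuous on the compact cylinder $[\mathbf i]$ and its supremum varies continuously as $A$ moves uniformly (using boundedness of $\|A^{\pm1}\|$). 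Thus $A\mapsto\tfrac1n\log Z_n(A,s)$ is continuous, and $P(A,s)$, an infimum of continuous functions, is upper semi-continuous. This direction needs nothing beyond continuity of $A$ into the invertible matrices.

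\textbf{Lower semi-continuity.} Fix $\varepsilon>0$ and, using the variational principle together with affinity of $\mu\mapsto h_\mu(T)+\Lambda^s_A(\mu)$ (so the supremum is approached by ergodic measures), pick ergodic $\mu$ with $h_\mu(T)+\Lambda^s_A(\mu)>P(A,s)-\varepsilon$. Assume first $h_\mu(T)>0$. The point is that the non-uniform hyperbolicity machinery of Section~\ref{ly-exp} transplants to the present situation with the base map $T$ \emph{fixed} and the ``derivative cocycle'' replaced by the Hölder cocycle $A$: the Lyapunov charts and regular sets $\Lambda_k$, the extension of the Oseledets splitting to cylinders, the cone estimates (Lemmas~\ref{estimate-nbhd}, \ref{cone-inv}, Corollaries~\ref{3-re}, \ref{add-re-c}), the Brin--Katok and Birkhoff steps, and the Cantor construction of a horseshoe all go through — in fact more easily, since $X$ is totally disconnected and no Riemannian geometry intervenes, only the Hölder bound $\|A(x)-A(y)\|\le C\,d(x,y)^\gamma$ and boundedness of $\|A^{\pm1}\|$ (the analog of expansion). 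This yields a compact $T$-invariant subshift $Y\subseteq X$ with $h_{\mathrm{top}}(T|_Y)\ge h_\mu(T)-\varepsilon$ carrying a $\{\lambda_j(\mu)\}$-dominated splitting $\mathbb R^{m_0}=E_1(x)\oplus\cdots\oplus E_\ell(x)$ for $A$, with rates as in \eqref{main-lemma}. Exactly as in Theorem~\ref{cont-LE}, strict cone invariance over the \emph{compact} set $Y$ is robust under uniform perturbation: there is $\delta>0$ so that every cocycle $A'$ with $\|A-A'\|<\delta$ also admits a $\{\lambda_j(\mu)\}$-dominated splitting over $Y$ with rates within $2\varepsilon$ of the $\lambda_j(\mu)$. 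Hence for every $T$-invariant $\nu$ supported in $Y$ the Lyapunov exponents of $\nu$ relative to $A'$ lie within $2\varepsilon$ of those of $\mu$ relative to $A$ (the subbundle $E_j$ traps $\dim E_j$ of them near $\lambda_j(\mu)$, and $\sum_j\dim E_j=m_0$), so $\Lambda^s_{A'}(\nu)\ge\Lambda^s_A(\mu)-2m_0\varepsilon$. Choosing ergodic $\nu$ on $Y$ with $h_\nu(T)\ge h_{\mathrm{top}}(T|_Y)-\varepsilon$ and using the trivial lower bound in the variational principle for $P(A',s)$,
\[
P(A',s)\ \ge\ h_\nu(T)+\Lambda^s_{A'}(\nu)\ \ge\ \big(h_\mu(T)-2\varepsilon\big)+\big(\Lambda^s_A(\mu)-2m_0\varepsilon\big)\ \ge\ P(A,s)-(2m_0+3)\varepsilon .
\]
If instead $h_\mu(T)=0$, the horseshoe degenerates; then invoke Kalinin's periodic approximation theorem for Hölder cocycles over subshifts of finite type to get a $T$-periodic orbit $\mathcal O$ with $|\lambda_i(\mathcal O,A)-\lambda_i(\mu,A)|<\varepsilon$ for all $i$. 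As $\mathcal O$ stays $T$-periodic under the unchanged base dynamics and its exponents are read off a single finite matrix product along $\mathcal O$ (hence depend continuously on the cocycle), $\Lambda^s_{A'}(\delta_{\mathcal O})\ge\Lambda^s_A(\mu)-2m_0\varepsilon$ for $A'$ close to $A$, so $P(A',s)\ge\Lambda^s_{A'}(\delta_{\mathcal O})\ge P(A,s)-(2m_0+1)\varepsilon$. In either case $\liminf_{A'\to A}P(A',s)\ge P(A,s)-(2m_0+3)\varepsilon$; letting $\varepsilon\to0$ gives lower semi-continuity, and with the previous paragraph, continuity of $A\mapsto P(A,s)$.

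\textbf{Main obstacle.} The only substantive step is lower semi-continuity, and within it the verification that Theorems~\ref{main} and \ref{cont-LE}, stated for $C^{1+\gamma}$ expanding maps, carry over when ``$Df$'' is replaced by a Hölder $GL(\mathbb R,m_0)$-valued cocycle over a subshift. Everything in Section~\ref{ly-exp} is local in the base and uses only Hölder continuity of the derivative cocycle and uniform invertibility, so the transfer is routine but must be written out carefully; crucially, the subshift $Y$ and its topological entropy depend only on $(T,\mu)$ and not on the cocycle at all, which is precisely what makes the perturbation argument close. The secondary technical point is the zero-entropy case, where the Katok construction is unavailable and one falls back on periodic approximation.
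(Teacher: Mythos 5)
Your proposal is correct and follows essentially the same route as the paper: upper semi-continuity from writing $P(A,s)$ as an infimum of quantities continuous in $A$ (the paper uses $\inf_k\frac1k P(T,\frac1k\log\varphi^s(A(\cdot,k)))$, you use the partition sums directly, which is equivalent), and lower semi-continuity by picking a near-optimal ergodic measure and transplanting the Katok-type horseshoe construction of Section \ref{ly-exp} to the cocycle setting, so that the dominated splitting over the fixed subshift $Y$ persists under uniform perturbation of $A$. Your explicit treatment of the zero-entropy case via periodic approximation is a small refinement the paper leaves implicit, but it does not change the method.
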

Denote by $\mathcal E_T$ the family of $T$-invariant ergodic measures on $X$. To prove the above theorem, we need the following version of Oseledets' Multiplicative Ergodic Theorem, due to Froyland, Lloyd and Quas \cite{flq} which need the map $T$ to be invertible. If $T: X\to X $ is a one-side subshift of finite type, we obtain the same result by considering its inverse limit space and the induced map.

\begin{theorem} Given a measurable map $A : X\to\mathbb{R}^{m_0\times m_0}$ and a measure $\mu\in\mathcal E_T$ such that
$$
\int \log^+ \|A(x)\| d\mu(x) < +\infty,
$$
there exist $\lambda_1(\mu)>\cdots >\lambda_p(\mu)\ge -\infty$, integers $m_1,\dots, m_p$ with $\sum_{i=1}^pm_i =m_0$, and a measurable family of splittings
$$ 
\mathbb{R}^{m_0}=E_1({x})\oplus E_2({x})\oplus\cdots\oplus E_{p}(x),
$$
such that for $\mu$-almost all $x$ the following holds
\begin{enumerate}
\item $A(x)E_i(x)\subset E_i(Tx)$ with equality if $\lambda_i(\mu)> -\infty$;
\item for all $v\in E_i(x)\backslash\{0\}$,
$$
\lim\limits_{n\to\pm\infty}\frac1n\log |A(x,n)v|=\lambda_i(\mu)
$$
with uniform convergence on any compact subset of $E_i(x)\backslash\{0\}$.
\end{enumerate}
\end{theorem}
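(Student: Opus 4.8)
The plan is to deduce the statement from the semi-invertible multiplicative ergodic theorem of Froyland, Lloyd and Quas \cite{flq}: first treating the case of an invertible base map, and then reducing the one-sided subshift of finite type to that case through the inverse limit.

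\emph{Invertible base map.} Suppose first that $T$ is invertible. Then the assertion is precisely the semi-invertible Oseledets theorem of \cite{flq}: one checks that our hypotheses $\mu\in\mathcal E_T$ and $\int\log^+\|A(x)\|\,d\mu(x)<\infty$ are exactly those demanded there, and that its conclusion --- Lyapunov exponents $\lambda_1(\mu)>\cdots>\lambda_p(\mu)\ge-\infty$ (the value $-\infty$ arising exactly when the matrices $A(x,n)$ eventually drop rank), multiplicities $m_i$ summing to $m_0$, a measurable splitting $\mathbb R^{m_0}=\bigoplus_i E_i(x)$ with $A(x)E_i(x)\subseteq E_i(Tx)$ (equality when $\lambda_i(\mu)>-\infty$), and two-sided exponential growth at rate $\lambda_i(\mu)$, uniform on compact subsets of $E_i(x)\setminus\{0\}$ --- matches ours verbatim. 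I would quote this directly. For orientation: the exponents and the forward filtration $V_1(x)\supseteq\cdots$ come from Kingman's subadditive ergodic theorem applied to $n\mapsto\log\|\wedge^k A(x,n)\|$, the reversed products along $T^{-1}$ supply a complementary filtration, and the technical content of \cite{flq} is that these intersect in a measurable equivariant splitting even when the cocycle is non-invertible; invertibility of $T$ (not of $A$) is what makes this possible.

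\emph{Reduction of the one-sided subshift.} Let now $T:X\to X$ be a one-sided subshift of finite type and $A:X\to\mathbb R^{m_0\times m_0}$ measurable as in the statement. Let $(\widehat X,\widehat T)$ be the inverse limit of $(X,T)$ --- the two-sided subshift on the same transition matrix, with $\widehat T$ the invertible shift --- and let $\pi:\widehat X\to X$ be the canonical factor map, so $\pi\circ\widehat T=T\circ\pi$. There is a unique $\widehat T$-invariant ergodic measure $\widehat\mu$ on $\widehat X$ with $\pi_*\widehat\mu=\mu$. Put $\widehat A:=A\circ\pi$. Then $\widehat A$ is measurable, $\int_{\widehat X}\log^+\|\widehat A\|\,d\widehat\mu=\int_X\log^+\|A\|\,d\mu<\infty$, and the generated cocycle satisfies $\widehat A(\widehat x,n)=A(\pi\widehat x,n)$ for every $n\ge 0$. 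Applying the invertible case to $(\widehat X,\widehat T,\widehat\mu,\widehat A)$ yields the exponents, multiplicities, and a measurable splitting $\mathbb R^{m_0}=\bigoplus_i E_i(\widehat x)$ over $\widehat X$ with all stated properties; since passing to the factor does not alter the forward cocycle values, it does not alter the Lyapunov exponents or multiplicities, so the data obtained coincide with those associated to $(X,T,\mu,A)$. This is what is meant by ``obtaining the same result by considering the inverse limit'': the conclusion is valid on $\widehat X$, which is precisely the setting in which this theorem is applied in Section~\ref{ly-exp}.

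\emph{Main obstacle.} There is no serious analytic step, since \cite{flq} is used as a black box. Care is needed only to: match the hypotheses and conclusion of \cite{flq} exactly (in particular the role of the exponent $-\infty$ and the fact that the equivariance $A(x)E_i(x)\subseteq E_i(Tx)$ is an equality only when $\lambda_i(\mu)>-\infty$); verify in the reduction that the integrability condition descends to $\widehat\mu$ and that $\widehat\mu$ is the unique ergodic lift of $\mu$; and --- the one genuinely delicate conceptual point --- be explicit that in the non-invertible setting the Oseledets \emph{splitting}, together with the $n\to-\infty$ asymptotics, lives naturally on the inverse limit $\widehat X$ rather than on $X$, where only the forward filtration is defined. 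Making this last point precise is the only content beyond citation and routine bookkeeping.
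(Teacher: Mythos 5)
Your proposal is correct and follows essentially the same route as the paper: the paper simply quotes the semi-invertible Oseledets theorem of Froyland, Lloyd and Quas \cite{flq} for invertible base maps and remarks that the one-sided subshift case is handled by passing to the inverse limit, which is exactly the citation-plus-reduction you carry out (with the lift $\widehat A=A\circ\pi$ and the ergodic lift $\widehat\mu$ made explicit). Your added care about where the splitting and the $n\to-\infty$ asymptotics actually live is consistent with how the result is used in the paper, namely on the inverse limit space.
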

By the variational principle for sub-additive topological pressure (see \cite{cfh}), we have that
$$
P(A,s)=\sup\Big\{h_{\mu}(T)+\lim\limits_{n\to\infty}\frac1n\int\log\varphi^s(A(y,n))\,d\mu\Big\}.
$$
Since that map 
$\mu\mapsto h_{\mu}(T)+\lim\limits_{n\to\infty}\frac1n\int\log\varphi^s(A(y,n)) \,d\mu$ is upper semi-continuous, there exists $\mu\in\mathcal E_T$ such that
\begin{eqnarray*}
P(A,s) &=& h_{\mu}(T) + \lim\limits_{n \to \infty} \frac1n \int \log \varphi^s(A(y,n))\, d\mu\\
&=& h_{\mu}(T) + \bigr(\lambda_1(\mu) + \cdots + \lambda_{[s]}(\mu) + (s-[s])\lambda_{[s+1]}(\mu)\bigr).
\end{eqnarray*}
On the other hand,  by the results in \cite{bch}, one has
\begin{eqnarray}
P(A,s) &=& \lim\limits_{k \to \infty} P(T, \frac1k \log \varphi^s(A(\cdot,k))) \\
&=& \inf_{ k \ge 1} P(T, \frac1k \log \varphi^s(A(\cdot,k))).
\end{eqnarray}

\begin{proof}[Proof of Theorem \ref{fs}]
We can follow the proof of Theorem \ref{cont-LE1}. Since
$$
P(A,s)=\inf_{k \ge 1}P(T, \frac1k\log\varphi^s(A(\cdot,k))),
$$ 
and for every $k\in\mathbb{N}$, the map
$$ 
P(T, \frac1k\log\varphi^s(A(\cdot,k)))\to\mathbb{R}$$ is continuous at $A$, the map 
$P(A,s)\to\mathbb{R}$ is upper semi-continuous at $A$.

Next we prove that the map is lower semi-continuous. First, there exists 
$\mu\in\mathcal E_T$ such that
\begin{eqnarray*}
P(A,s) = h_{\mu}(T) + \bigr(\lambda_1(\mu) + \cdots + \lambda_{[s]}(\mu) + (s-[s])\lambda_{[s]+1}(\mu)\bigr).
\end{eqnarray*}
Then for every $\varepsilon > 0$, following the proof of Theorem \ref{main}, we find a compact invariant set $K_{\varepsilon}$ such that
\begin{enumerate}
\item $h_{top}(T|_{K_{\varepsilon}})>h_{\mu}(T)-\varepsilon$;
\item there is $m\in\mathbb{N}$ and for each $x\in K_{\varepsilon}$, $1\le i\le p$ a continuous family of invariant cones $U_i(x)$ such that
$$ 
E_1(x)\oplus\cdots\oplus E_i(x)\subset\text{ Int}U_i, \quad A(x)U_i(x)\subset\text{Int}U_i(Tx),
$$
and for all $v\in U_i(x)\backslash\{0\}$, 
$$ 
|A(x,m) v| \ge e^{(m\lambda_i(\mu)-m\varepsilon)}|v|.
$$
\end{enumerate}
Therefore if $B: X\to\mathbb{R}^{d\times d}$ is close to $A$, we will have the following properties:
\begin{enumerate}
\item for each $x\in K_{\varepsilon}$, $1\le i\le p$,
$$  
B(x) U_i(x) \subset\text{Int}U_i(Tx);
$$
\item for all $v\in U_i(x)\backslash\{0\}$, 
$$ 
|B(x,m)v|\ge e^{(m\lambda_i-2m\varepsilon)}|v|.
$$
\end{enumerate}
Hence, there exists $\nu \in \mathcal E(K_{\varepsilon},T) $ such that
\begin{enumerate}
\item $h_{\nu}(T) = h_{top}(T|_{K_{\varepsilon}})$
\item for $i \in [1,d_1]$,  $ \lambda_i(B,\nu) \ge \lambda_1-2\varepsilon$,
\item for $i \in [\sum_{j=1}^k d_j+1, \sum_{j=1}^{k+1}d_j]$,   $ \lambda_i(B,\nu) \ge \lambda_k-2\varepsilon.$
\end{enumerate}
 Thus for every $s \in [0,m_0]$. We have
 \begin{eqnarray*}
 P(B,s) &\ge& P_{K_{\varepsilon}}(B,s)\\
   &>& h_{\mu}(T) - \varepsilon - [ \lambda_1(B,\nu)+\cdots+ \lambda_{[s]}(B,\nu)+(s-[s]) \lambda_{m+1}(B,\nu)]
 \\ &\ge& P(A,s) - (m_0+1) \varepsilon.
 \end{eqnarray*}
 This gives the lower semi-continuity of sub-additive topological pressure.
\end{proof}

\begin{remark}
If $A(x) \in GL(\mathbb{R},m_0)$, then we can construct a compact invariant set $K_{\varepsilon}$ such that
\begin{enumerate}
\item $h_{top}(T|_{K_{\varepsilon}}) > h_{\mu}(T) - \varepsilon,$
\item there is $m\in\mathbb{N}$ and for each $x\in K_{\varepsilon}$ a continuous invariant dominated splitting such that
$$ 
\mathbb{R}^{m_0} = E_1(x) \oplus \cdots  \oplus E_i(x) \oplus  \cdots  \oplus E_p(x),
$$
and for all $v\in E_i(x)\backslash\{0\}$, 
$$ 
e^{(m\lambda_i-m\varepsilon)}|v|\le |A(x,m)v|\le e^{(m\lambda_i+m\varepsilon)}|v|.
$$
\end{enumerate}
Therefore if $B(x) \in GL(\mathbb{R},m_0)$ is near to $A$, we will have the following properties:
\begin{enumerate}
\item  for each $x\in K_{\varepsilon}$, $1\le i\le p$, there exists a continuous invariant dominated splitting such that
$$ 
\mathbb{R}^{m_0}=E_1(B, x)\oplus\cdots\oplus E_i(B, x)\oplus\cdots\oplus E_p(B, x),
$$
\item for all $v\in E_i(B,x)\backslash\{0\}$, 
$$ 
e^{(m\lambda_i-2m\varepsilon)}|v|\le |B(x,m) v|\le e^{(m\lambda_i+2m\varepsilon)}|v|.
$$
\end{enumerate}
\end{remark}

\begin{remark}
If a matrix cocycle $A$ on $X$ is locally constant, then the above theorem gives the result in \cite{fs14}.
\end{remark}


\bibliographystyle{alpha}
\bibliography{bib}

\end{document}